\definecolor{gray}{rgb}{0.8,0.8,0.8}
\definecolor{dblue}{rgb}{0.21,0.21,0.55}
\global\long\def\intdiff{\mathrm{d}}
\renewcommand{\P}{\mathbb{P}}
\newcommand{\E}{\mathbb{E}}
\newcommand{\N}{\mathds{N}}
\newcommand{\1}{\mathbbm{1}}
\newcommand{\KLEINO}{{\scriptstyle{\mathcal{O}}}}
\DeclareMathAccent{\verywidehat}{\mathord}{largesymbols}{'144}
\newcommand{\var}{\mathbb{V}\hspace*{-0.05cm}\textnormal{a\hspace*{0.02cm}r}}
\newdefinition{remark}{Remark}
\newdefinition{defi}{Definition}
\newtheorem{theo}{Theorem}
\newtheorem{prop}{Proposition}[section]
\newtheorem{lem}[prop]{Lemma}
\newtheorem{cor}[prop]{Corollary}
\newtheorem{assumpsec}[prop]{Assumption}
\newcounter{assumpl}
\begin{document}
\begin{frontmatter}
\title{Testing for jumps in processes with integral fractional part and jump-robust inference on the Hurst exponent}
\author[1]{Markus Bibinger\footnotemark[1]
}
\author[1]{Michael Sonntag\footnote[1]{Financial support from the Deutsche Forschungsgemeinschaft (DFG) under grant 403176476 is gratefully acknowledged.}}
\address[1]{Faculty of Mathematics and Computer Science, Institute of Mathematics, Julius-Maximilians-Universit\"at W\"urzburg.}
\begin{abstract}
We develop and investigate a test for jumps based on high-frequency observations of a fractional process with an additive jump component. The Hurst exponent of the fractional process is unknown. The asymptotic theory under infill asymptotics builds upon extreme value theory for weakly dependent, stationary time series and extends techniques for the semimartingale case from the literature. It is shown that the statistic on which the test is based on weakly converges to a Gumbel distribution under the null hypothesis of no jumps. We prove consistency under the alternative hypothesis when there are jumps. Moreover, we establish convergence rates for local alternatives and consistent estimation of jump times. 
In the process, we show that inference on the Hurst exponent of a rough fractional process is robust with respect to jumps. This provides an important insight for the growing literature on rough volatility. We demonstrate sound finite-sample properties in a simulation study and showcase the applicability of our methods in an empirical example with a time series of volatilities.
\begin{keyword}Fractional Brownian motion\sep high-frequency data \sep Hurst exponent\sep jump test \sep rough volatility 
\MSC[2010] 60G22 \sep 62M07 \sep 60G70
\end{keyword}
\end{abstract}
\end{frontmatter}
\thispagestyle{plain}
\section{Introduction\label{sec:1}}
This work is devoted to the problem of testing for jumps in a discretely observed integral fractional process. In the literature on statistics for stochastic processes, there have been many contributions considering semimartingales with jumps. Another strand of research considers fractional processes without jumps, while models with jumps and fractional components did so far not attract much attention.

Since jump tests for semimartingales heavily exploit the known magnitude of increments of a continuous semimartingale, it is a statistically and mathematically interesting problem to consider jumps in a fractional process with unknown Hurst exponent when this is not possible. A practical motivation for this work is moreover the recent interest in fractional stochastic volatility models in mathematical finance, econometrics, and statistics, starting with \cite{roughvola}. As explained in \cite{chongrosenbaum} and in \cite{Wang}, over longer time scales the volatility process, either estimated through realized volatilities over disjoint subintervals from intradaily ultra-high-frequency data, or the logarithmic realized volatility itself, can be adequately modelled by discrete observations of a fractional process. In this framework, discrete observations of the volatility, which is usually latent, are available, either directly or with negligible noise from the pre-estimation based on ultra-high-frequency prices from different time intervals. Using these models, empirical evidence for rough fractional volatility has been consistently found by \cite{fukasawarough}, \cite{lunderough} and \cite{Wang}, among others. These models and inference methods obtained therein turned out to be empirically quite successful, for instance, in volatility forecasting, see, e.g.\ \cite{Wang} and \cite{forecast}. The literature so far considers continuous processes as underlying continuous-time volatility models. In view of the impact of news events on financial markets and empirical evidence for volatility jumps which has been documented, for instance, in \cite{voljumps} and \cite{bibwinkellev}, an additive mixture model with a fractional continuous component and additional jumps appears to be a natural extension. The importance of including jumps in the fractional model is emphasized as well in the introduction of \cite{chongrosenbaum} and the conclusion of \cite{Wang}. Therefore, we consider a stochastic process of the form
\[X_{t}=Y_{t}+J_{t}\,,\]
where $(Y_t)$ is a fractional process with continuous paths and general Hurst exponent $H\in(0,1)$, and $(J_{t})$ a general jump process.

We aim to separate the jumps from the fractional process with continuous paths based on discrete high-frequency observations over the fix time interval $[0,1]$. There is extensive literature on disentangling jumps and a continuous component of semimartingales based on high-frequency data. For this problem, as the distance between observation times $1/n\to 0$, the scaling $n^{-1/2}$ of absolute increments of a continuous semimartingale is exploited. Truncation methods pioneered in \cite{mancini} ascribe (much) larger increments to jumps and discard these increments for jump-robust volatility estimation. Combined with extreme value theory, \cite{leemykland} used the asymptotic Gumbel distribution of the rescaled maximal absolute increment, standardized with a spot volatility estimate, to establish a popular test for jumps further studied by \cite{palmes2016gumbel, palmes}. The rich literature on high-frequency statistics for semimartingales with jumps and truncation methods is partly summarized in the book by \cite{jacodprotter} and recent contributions include, among others, \cite{figueroa2019optimum}, \cite{amorino2020unbiased} and \cite{inatsugu2021global}.

In principle, truncation methods could be extended to a fractional continuous process with an additive jump component if the Hurst exponent $H$ was known. Then the scaling $n^{-H}$ of absolute increments of the continuous process could be exploited. Here, we are interested in the more difficult problem when $H$ is unknown. Intuitively, the comparison of observed large absolute increments and average absolute increments should still allow detecting jumps. Our idea is hence that the empirical distribution of increments reveals large absolute increments due to jumps separating from the remaining majority of increments.
 
Exploring methods based on this idea to disentangle jumps and continuous increments from a fractional process with unknown $H$, we found that the testing problem can be optimally solved using a statistic similar to the one by \cite{leemykland}. In particular, it is natural to construct a test based on the maximum of suitably normalised absolute increments. Since the maximal absolute increment and the normalizing spot volatility estimate contain the same scaling factors $n^{H}$ depending on $H$, they cancel out. Based on extreme value theory for weakly dependent Gaussian time series, we prove that under the null hypothesis of no jumps the limiting distribution is a standard Gumbel distribution. This yields a similar test as in the semimartingale case. We believe that it is appealing to show that the well-known procedure can be adapted to the fractional setting. A main difficulty, however, is that the spot volatility estimation should be robust with respect to jumps without knowing $H$. To achieve this, we use second-order increments and power variations with small powers what solves the problem under some restrictions on the jump activity. 

Related to this insight, we point out that a jump-robust estimation of the Hurst exponent is as well feasible. This answers a question which is currently of great interest in view of the empirical evidence for rough volatility, if ignoring jumps can manipulate the results. Our theoretical findings are good news for the existing literature, since we show that standard estimators of $H$ work asymptotically also in the presence of jumps which are ignored, if the true values of $H$ are small. The question if a potential influence of jumps is negligible in finite-sample applications is, however, more delicate. On the one hand, ignoring jumps for larger values of $H$ should not result in manipulated small estimates. In fact, our simulations show that even if a finite-sample bias due to jumps becomes relevant, it will be positive, resulting in larger estimates. On the other hand, additional to jumps we also find some volatility outliers in a data example and demonstrate that they induce a negative bias for the estimation of $H$.  

We derive a novel result about the estimation of jump times with a very fast rate of convergence. This allows the localization of jumps. It can also be used to filter out jumps before estimating $H$, what will further improve inference on rough fractional volatility models. We use this in an application to a time series of daily volatility estimates based on intra-daily ultra-high-frequency prices containing data analysed in \cite{forecast} with a parametric fractional Brownian motion model. Our methods locate several jumps and volatility outliers in the time series and we find that estimates of $H$ slightly increase after filtering them out.

This paper is organised as follows: In Section \ref{sec:2}, we introduce our observation model, the testing problem and fix some notation and assumptions. The construction of the statistics and tests is motivated and outlined in Section \ref{sec:3}. Section \ref{sec:4} highlights the jump-robust inference on the Hurst exponent $H$, which is related to required ingredients for the asymptotic theory of the test. In Section \ref{sec:5}, we present the statistical methods to test for jumps and the main results of this paper. They include the convergence of our normalised test statistic under the null hypothesis to a Gumbel limit distribution and the consistency under the alternative hypothesis. A simulation study analysing the finite-sample performance of the methods is summarized in Section \ref{sec:6}. Section \ref{sec:6neu} provides the application to real volatility data with a discussion of interesting stylized facts and the empirical insights. Section \ref{sec:7} concludes. All proofs are given in Section \ref{sec:8}. To make this work self-contained, we include some crucial non-standard prerequisites in the proofs section.

\section{Model, assumptions, and testing problem\label{sec:2}}
\subsection{Observation model\label{subsection 2.1}}
On some underlying probability space $(\Omega,\mathcal{A},\mathbb{P})$, let $B^H=(B_t^H)_{t\in[0,1]}$ be a fractional Brownian motion with Hurst exponent $H\in (0,1)$, that is a centred Gaussian process with continuous paths and covariance function
\begin{eqnarray*}
\mathbb{E}[B_t^HB_s^H]=\frac{1}{2}\left( t^{2H}+s^{2H}-\vert t-s \vert^{2H} \right),\quad t,s\in[0,1].
\end{eqnarray*}
With the Kolmogorov-Chentsov theorem, it can be shown that $B^H$ has H\"older continuous paths of any order less than $H$, see Section 1.4 of \cite{nourdin2012selected}. Let $(\sigma_s)_{s\in[0,1]}$ be $\alpha$-H\"older continuous with $\alpha>1-H$ and  $J=(J_t)_{t\in[0,1]}$ a  c\`adl\`ag jump process. 
In this case, Young proved that the pathwise Riemann-Stieltjes integral 
\begin{eqnarray*}
Y_t=\int_0^t\!\sigma_s\,dB_s^H, \quad  t\in[0,1],
\end{eqnarray*}
exists and is well-defined. Although one motivation is that $(X_t)$ provides a model for volatility processes, we shall refer to $(\sigma_t)$ as the volatility of our continuous process. In the application, it could rather model the volatility of the volatility then. We assume that this integral fractional processes $Y=(Y_t)_{t\in[0,1]}$ is observed with an additive jump component at equidistant discrete time points,
\begin{eqnarray*}
X_{k/n}=Y_{k/n}+J_{k/n}, \quad k=0,\ldots,n,
\end{eqnarray*}
over the unit time interval $[0,1]$. We develop asymptotic theory under infill asymptotics, such that the distance between neighboured observation times over the fix time interval tends to zero, $1/n\to 0$, as $n\to\infty$. As usually in a high-frequency framework, adding a bounded drift term to $(Y_t)$ would not affect our asymptotic results, such that our results could be extended, for instance, to the fractional Ornstein-Uhlenbeck model of \cite{Wang}. This is the case, since as $n\to \infty$, the increments 
\begin{align*}
\Delta_{n,k}^{(1)} X=(X_{k/n}-X_{(k-1)/n}), \quad k=1,\ldots,n,\end{align*}
and second-order increments
\begin{align*} \Delta_{n,k}^{(2)} X=(X_{k/n}-2X_{(k-1)/n}+X_{(k-2)/n}),\quad k=2,\ldots,n,\end{align*}
are dominated by jumps and the process $(Y_t)$, while increments of a drift term would be of order $n^{-1}$. We leave out a possible drift term for the sake of a simpler exposition. In the same way as for $(X_t)$, the notation for (second-order) increments is used for other processes. 

\subsection{Testing problem}
\begin{figure}[t]
\begin{center}
\includegraphics[width=6cm]{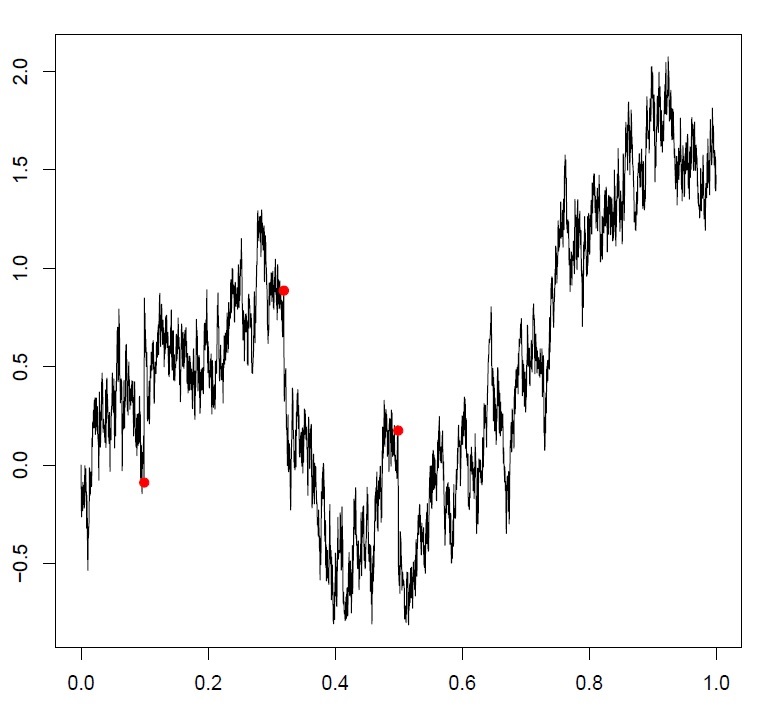}~\includegraphics[width=6cm]{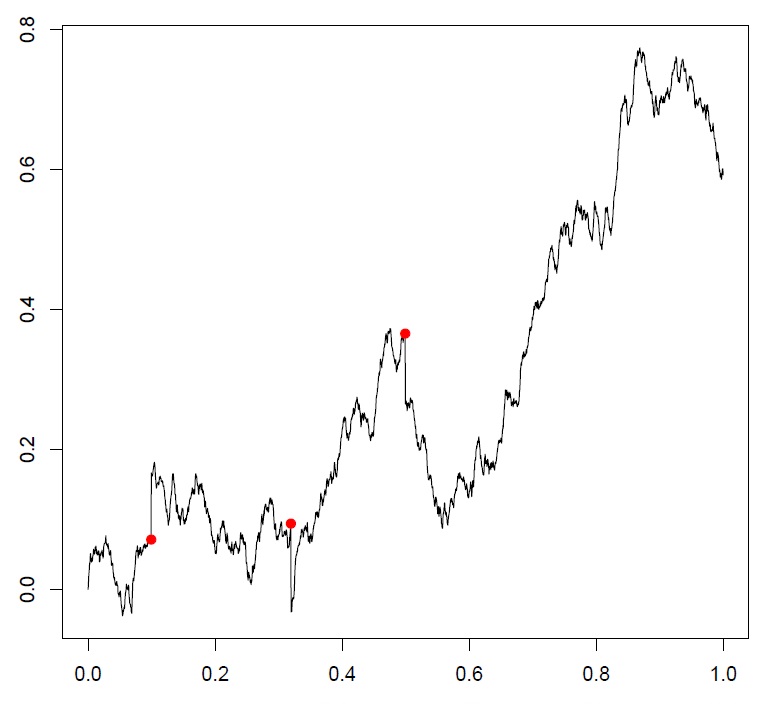}\\
\includegraphics[width=6cm]{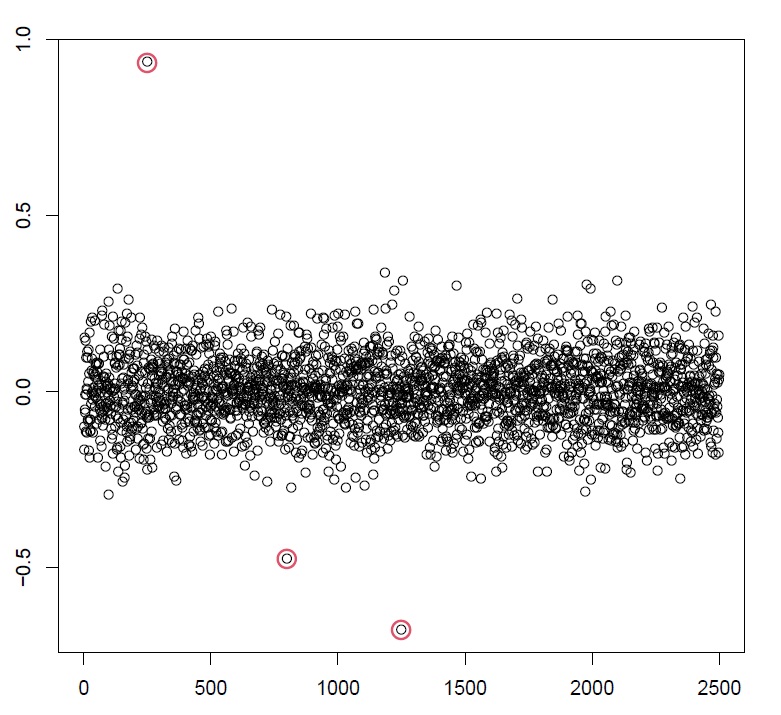}~\includegraphics[width=6cm]{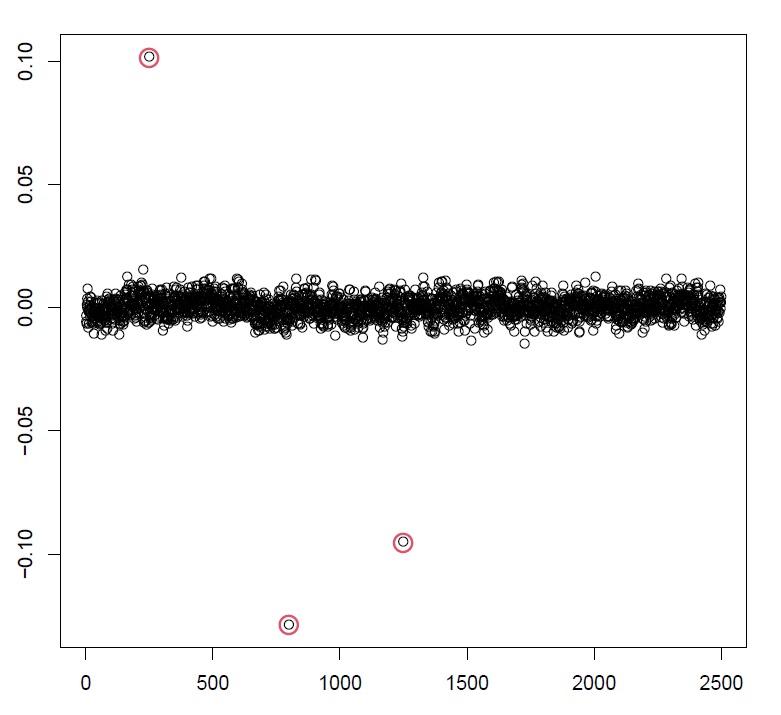}
\caption{\label{Fig:paths}Sample paths (top) and increments (bottom) with $H=0{.3}$ (left) and $H=0{.7}$ (right), and three jumps added which are highlighted by red circles and points.}
\end{center}
\end{figure}
We formalise our testing problem. When testing for positive jumps, we consider the null hypothesis $ H_0: \omega\in\Omega_0 $, and the alternative hypothesis $H_1:\omega\in\Omega_+$, where
\begin{align*}
\Omega_0=\left\lbrace \omega\in\Omega \mid J_t(\omega)= 0 \text{ for } t\in(0,1] \right\rbrace \quad \text{ and } \quad \Omega_+=\left\lbrace \omega\in\Omega\mid \exists t\in(0,1]: \Delta J_t(\omega)>0 \right\rbrace.
\end{align*}
In the same way, we can define a testing problem for negative jumps, which we omit, since we can treat negative jumps analogously. The testing problem for general jumps is defined with the null hypothesis  $ H_0: \omega\in\Omega_0 $ tested against the alternative hypothesis $H_1:\omega\in\Omega_1$, where
\begin{align*}
\Omega_0=\left\lbrace \omega\in\Omega \mid J_t(\omega)=0 \text{ for } t\in(0,1] \right\rbrace \quad \text{ and } \quad \Omega_1=\left\lbrace \omega\in\Omega\mid \exists t\in(0,1]:  \Delta J_t(\omega)\neq0 \right\rbrace.\end{align*}
It is not meaningful to consider a jump at time $0$ when we suppose c\`adl\`ag jumps. Figure \ref{Fig:paths} illustrates the intuitive fact that detecting jumps is more difficult when $H$ is small, when the sample paths of $B^H$ and $(Y_t)$ are rougher. Here, we add three jumps to two simulated paths of $B^H$, one with  $H=0{.3}$ and one with $H=0{.7}$. Comparing increments which include the jumps to other increments of $(Y_t)$ shows a much larger distance in the smoother example. The values on the $y$-axis also show the sizes of the three jumps which were set much smaller in the smoother case. 

\subsection{Regularity assumptions}
We work under the following regularity assumptions.
\begin{assumpsec}\label{Annahmen}
There is a constant $\alpha$, $0<\alpha\leq 1$, with $\alpha>1-H$, and a constant $K$, such that 
        \begin{eqnarray*}
        \vert \sigma_t-\sigma_s \vert\leq K\vert t-s \vert^{\alpha},\quad 0\leq s,t\leq 1,
        \end{eqnarray*}
				and $\sigma_s>0$, for all ${s\in[0,1]}$.
\end{assumpsec}
\clearpage 

One can analogously impose the conditions on the squared volatility $(\sigma_t^2)$. The current literature using rough fractional volatility for forecasting typically relies on a constant $\sigma_t=\sigma$, $t\in[0,1]$, see, e.g.\ \cite{forecast}. It is a main task for our methods and the asymptotic theory to include time-dependent $(\sigma_t)$. Due to the dependence structure of fractional processes and less available theory on these models compared to semimartingales, the proofs are different and one cannot expect results under similar minimal conditions as for semimartingales $(X_t)$, e.g.\ in \cite{palmes}. Note that the regularity of $(\sigma_t)$ is required to ensure that $(Y_t)$ is well-defined and hence analogous regularity is imposed in the literature on statistics for fractional processes, e.g.\ in \cite{corcuera2006power}. Our proofs of the main results Theorem \ref{Gumbelkonvergenz} and Theorem \ref{Divergenz der Teststatistik} do, however, not rely on the specific regularity $\alpha$ in any way, but only use the continuity.
\begin{assumpsec}\label{AnnahmeSpruenge}
 $(J_t)_{t\in[0,1]}$ is a jump process with c\`adl\`ag paths that satisfies for some $p\in(0,1]$ the condition
\begin{eqnarray*}
\mathbb{P}\left( \lim_{n\to\infty}\sum_{k=1}^n\vert \Delta_{n,k}^{(1)}J \vert^p<\infty \right)=1.
\end{eqnarray*}
\end{assumpsec}
The previous assumption is stronger for smaller $p\in(0,1]$. Jumps of finite variation as a minimal condition will be required to obtain a jump-robust spot volatility estimator for all $H\in(0,1)$. 

Throughout the manuscript, we use the notation $a_n\propto b_n$, for sequences $(a_n)_{n\in\N}$ and $(b_n)_{n\in\N}$, if $a_n/b_n$ tends to some positive constant and the standard notation $\mathcal{O}$ and $\KLEINO$ for Landau symbols, as well as $\mathcal{O}_{\mathbb{P}}$ and $\KLEINO_{\mathbb{P}}$ for the stochastic Landau symbols with respect to the probability measure $\P$.
\section{Construction of the tests\label{sec:3}}
In this section, we introduce the statistics our tests will be based on. Considering the maximal, standardized, absolute (second-order) increment the question of how to construct a test boils down to the question how to standardize before taking the maximum. The standardization is up to a scaling factor  $n^{H}$ basically a spot volatility estimate of $\sigma_t$. The standardization with an estimated volatility is crucial for several reasons. First, although the volatility is lower and upper bounded under Assumption \ref{Annahmen}, normalising with the local volatility is important to better detect jumps, since $n^{H}\Delta_{n,k}^{(1)} Y$ is approximately $\mathcal{N}(0,\sigma_{k/n}^2)$ normally distributed and time-varying volatility levels should not be neglected when comparing the size of the $k$th absolute increment to $n^{-H}\sigma_{k/n}$. Moreover, the standardization yields a pivotal test, that is, the limit distribution under the null hypothesis will not depend on the unknown volatility $(\sigma_t)$ any more. While in the semimartingale case we could simply consider a non-standardized version $n^{1/2}\max_k|\Delta_{n,k}^{(1)} Y|$, if we do not know $H$ the scaling factor $n^{H}$ is unknown and we exploit that the spot volatility estimates contain the same scaling factor $n^{H}$, which hence cancels out. Equivalently, this can be seen as a comparison of the size of an increment to many others in a neighbourhood.

While some aspects of our tests will be similar to the Gumbel tests within the semimartingale framework by \cite{leemykland}, the standardization is more crucial and more involved here. In particular, the spot volatility estimation needs to be robust with respect to jumps and we cannot use truncation methods to achieve this. We use power variations with small powers $p\le 1$, instead of the standard choice $p=2$, and second-order increments instead of increments for the jump-robust estimation of the spot volatility for all possible Hurst exponents $H\in(0,1)$. Denote $C_p$ the $p$-th absolute moment of a standard normal distribution, $\mathcal{N}(0,1)$, for some $p\in(0,1]$. For the window sizes of the local spot volatility estimation, let $(h_n)_{n\in\mathbb{N}}$ be a sequence of natural numbers with $\lim_{n\to\infty}h_n=\infty$ and $\lim_{n\to\infty}n^{-1}h_n=0$. In our notation, $n^{-1}h_n$ corresponds to the bandwidth of the nonparametric estimation. The test for positive jumps will be based on the test statistic
\begin{align}\label{Tn}
T_n=\max_{k,j}\left(\frac{\Delta_{n,j}^{(2)}X}{\left(h_n^{-1}C_p^{-1}\sum_{i=(k-1)h_n+2}^{kh_n} \Big\vert \Delta_{n,i}^{(2)}X \Big\vert^p\right)^{\frac{1}{p}}}\right).
\end{align}
To simplify the notation, we use the short notation
\begin{align*}
\max_{k,j}z_{k,j}:=\max_{2\leq j< h_n}z_{1,j}\vee \max_{2\leq k\leq m}\left(\max_{(k-1)h_n\leq j< kh_n}z_{k,j}\right)\vee \max_{mh_n\leq j\leq n}z_{m+1,j} \,,
\end{align*}
where $a\vee b=\max\{a,b\}$, and 
\begin{eqnarray*}
    \max_{k}z_k:=\max_{1\leq k\leq m}z_k,\quad  m=\lfloor n/h_n\rfloor\,,
\end{eqnarray*}
for some measurable transformations $z_{k,j}$ and $z_{k}$ of increments of our processes. Analogously, testing for jumps will be based on the statistic
    \begin{align}\label{Rn}
R_n=\max_{k,j}\left(\frac{\vert \Delta_{n,j}^{(2)}X \vert}{\left(h_n^{-1}C_p^{-1}\sum_{i=(k-1)h_n+2}^{kh_n} \Big\vert \Delta_{n,i}^{(2)}X \Big\vert^p\right)^{\frac{1}{p}}}\right).
\end{align}
Second-order increments are important in the denominators of \eqref{Tn} and \eqref{Rn} for the spot volatility estimation to ensure its good properties for large values of $H$. Then, however, we need to take second-order increments in the numerators as well, since $n^{H}\Delta_{n,k}^{(2)} Y$ is approximately $\mathcal{N}(0,\sigma_{k/n}^2 C_H)$ distributed, with a constant $C_H$ that hinges on the unknown $H$. Taking second-order increments in numerators and denominators of the ratios, the factors $C_H\cdot n^H$ cancel out. To ensure robustness against jumps which satisfy Assumption \ref{AnnahmeSpruenge}, we impose for our results the condition that $\lim_{n\to\infty}n^{pH}h_n^{-1}=0$. Since the statistician chooses $p$ and $h_n$, while $H\in(0,1)$ is an unknown parameter, smaller powers $p$ are preferable to allow for a choice of $h_n$ in favour of spot volatility estimates with small variance. An optimal window size $h_n$ in the normalization factor is given by $h_n\propto n^{2\alpha/(2\alpha+1)}$, with the regularity $\alpha$ of $(\sigma_t)$ from Assumption \ref{Annahmen}. This is obtained from the standard decomposition of the mean squared estimation error in variance and squared bias and balancing both parts. However, the asymptotic results for the tests will not require a rate-optimal volatility estimation and hence we do not need to assume that $\alpha$ is known. We will establish the main results under the condition that $h_n \propto n^{\beta}$, with some $\beta$, such that $pH < \beta<1$. This can be ensured by choosing $p$ sufficiently small. In any case, we have to use powers $p\le 1$, and cannot use the standard value $p=2$ as for realized volatilities.

The reason that power variations of second-order increments allow to estimate the volatility robust with respect to jumps without any truncation relates to a question which is currently of great interest for the literature on rough volatility models. The question is, if $H$ can be estimated robustly as well, and if ignoring jumps could manipulate estimates of $H$ when applying standard estimators with increments of $(X_t)$ inserted, while the estimator is actually built for observations of the continuous process $(Y_t)$. Given the importance of this aspect, we emphasize it in the next section before finishing the construction of our tests. 

\section{Jump-robust inference on rough processes\label{sec:4}}
For the construction of our test, we require a jump-robust spot volatility estimation. We use an estimator based on power variations of second-order increments and we do not use truncation or bipower variation statistics. The reason why this works relates to jump-robust inference on the Hurst exponent for rough processes based on high-frequency observations. Since we expect that this is relevant for the current research on rough volatility models, we emphasize here some crucial aspects. In particular, we consider in this section the standard discrete quadratic variation and a standard estimator for the Hurst exponent to analyse the effect of jumps on these statistics.

\begin{prop}\label{proprobust1}If $H\in(0,1/2)$, for any jump process $(J_t)$ with finite quadratic variation which is independent of $(Y_t)$, it holds under Assumption \ref{Annahmen} that
\begin{align}\label{intvola}n^{2H-1}\sum_{j=1}^n\big(\Delta_{n,j}^{(1)}X\big)^2=\int_0^1\sigma_s^2\,\intdiff s+\mathcal{O}_{\P}\big(n^{\max((2H-1), -1/2)}\big)\,.\end{align}
In particular, this implies that \(n^{2H-1}\sum_{j=1}^n\big(\Delta_{n,j}^{(1)}X\big)^2\stackrel{\P}{\rightarrow}\int_0^1\sigma_s^2\,\intdiff s\), as $n\to\infty$.
\end{prop}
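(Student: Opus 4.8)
The plan is to exploit the additive structure $X=Y+J$ and to split the rescaled discrete quadratic variation into a continuous part, a pure-jump part, and a cross term, each governed by a different rate. Writing $\Delta_{n,j}^{(1)}X=\Delta_{n,j}^{(1)}Y+\Delta_{n,j}^{(1)}J$ and expanding the square gives
\begin{align*}
n^{2H-1}\sum_{j=1}^n\big(\Delta_{n,j}^{(1)}X\big)^2=A_n+B_n+C_n,
\end{align*}
with $A_n=n^{2H-1}\sum_{j=1}^n(\Delta_{n,j}^{(1)}Y)^2$, the cross term $B_n=2n^{2H-1}\sum_{j=1}^n\Delta_{n,j}^{(1)}Y\,\Delta_{n,j}^{(1)}J$, and $C_n=n^{2H-1}\sum_{j=1}^n(\Delta_{n,j}^{(1)}J)^2$. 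I would establish that $A_n=\int_0^1\sigma_s^2\,\intdiff s+\mathcal{O}_{\P}(n^{-1/2})$, that $C_n=\mathcal{O}_{\P}(n^{2H-1})$, and that $B_n$ is of strictly smaller order than both, which produces the claimed remainder $\mathcal{O}_{\P}(n^{\max(2H-1,-1/2)})$.

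For $A_n$ I would invoke the limit theory for quadratic variations of the Gaussian integral process $Y=\int_0^{\cdot}\sigma_s\,\intdiff B_s^H$. Localising $\sigma$, one writes $\Delta_{n,j}^{(1)}Y=\sigma_{(j-1)/n}\Delta_{n,j}^{(1)}B^H+r_{n,j}$, where the $\alpha$-Hölder continuity of $\sigma$ gives $\E[r_{n,j}^2]=\mathcal{O}(n^{-2H-2\alpha})$. This yields $n^{2H}\E[(\Delta_{n,j}^{(1)}Y)^2]=\sigma_{(j-1)/n}^2+\mathcal{O}(n^{-\alpha})$, so that $\E[A_n]=\int_0^1\sigma_s^2\,\intdiff s+\mathcal{O}(n^{-\alpha})$ after a Riemann-sum approximation; the bias is negligible because $\alpha>1-H>1/2$ for $H<1/2$, hence $\mathcal{O}(n^{-\alpha})=\KLEINO(n^{-1/2})$. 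For the fluctuation, a Breuer--Major central limit theorem applies since the increment autocorrelations are square-summable for $H<3/4$, and a variance computation gives $A_n-\E[A_n]=\mathcal{O}_{\P}(n^{-1/2})$.

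The pure-jump term $C_n$ is controlled through the finiteness of the quadratic variation of the c\`adl\`ag jump process. Under the finite-variation-type jump assumption (Assumption \ref{AnnahmeSpruenge} with some $p\le 1$) one has $\sum_{j=1}^n(\Delta_{n,j}^{(1)}J)^2\le(\max_j|\Delta_{n,j}^{(1)}J|)^{2-p}\sum_{j=1}^n|\Delta_{n,j}^{(1)}J|^p=\mathcal{O}_{\P}(1)$, since the maximal increment stays bounded and the $p$-power variation converges; therefore $C_n=\mathcal{O}_{\P}(n^{2H-1})$.

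The cross term $B_n$ is the main obstacle. A naive Cauchy--Schwarz bound yields only $|B_n|\le 2n^{2H-1}\big(\sum_j(\Delta_{n,j}^{(1)}Y)^2\big)^{1/2}\big(\sum_j(\Delta_{n,j}^{(1)}J)^2\big)^{1/2}=\mathcal{O}_{\P}(n^{H-1/2})$, which dominates both $n^{-1/2}$ and $n^{2H-1}$ for $H\in(0,1/2)$ and is thus useless. The decisive step is to use the independence of $J$ and $Y$: conditionally on the path of $J$, the sum $S_n:=\sum_{j}\Delta_{n,j}^{(1)}J\,\Delta_{n,j}^{(1)}Y$ is centred Gaussian with
\begin{align*}
\var(S_n\mid J)=\sum_{j,k}\Delta_{n,j}^{(1)}J\,\Delta_{n,k}^{(1)}J\,\cov\big(\Delta_{n,j}^{(1)}Y,\Delta_{n,k}^{(1)}Y\big).
\end{align*}
Since $\cov(\Delta_{n,j}^{(1)}Y,\Delta_{n,k}^{(1)}Y)$ inherits, up to the factor $n^{-2H}$ and the bounded $\sigma^2$, the fractional-Gaussian-noise autocorrelation $\rho_{j-k}=\tfrac12(|j-k+1|^{2H}+|j-k-1|^{2H}-2|j-k|^{2H})$, and $\sum_m|\rho_m|<\infty$ exactly because $H<1/2$ (as $|\rho_m|\asymp|m|^{2H-2}$ is summable iff $2H-2<-1$), the associated covariance operator is bounded on $\ell^2$. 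A Schur-test bound then gives $\var(S_n\mid J)\le K^2\|\rho\|_{\ell^1}\,n^{-2H}\sum_j(\Delta_{n,j}^{(1)}J)^2=\mathcal{O}_{\P}(n^{-2H})$, so $S_n=\mathcal{O}_{\P}(n^{-H})$ and $B_n=\mathcal{O}_{\P}(n^{H-1})$. Because $H-1<\max(2H-1,-1/2)$ throughout $H\in(0,1/2)$, the cross term is negligible, and collecting $A_n$, $B_n$, $C_n$ yields the expansion and, a fortiori, the convergence in probability. The antipersistence of the increments for $H<1/2$ -- equivalently the $\ell^1$-summability of $\rho$ -- together with independence is precisely what delivers the sharp rate, and this is where the restriction $H<1/2$ is indispensable.
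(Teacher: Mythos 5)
Your decomposition into the quadratic variation of $Y$, the sum of squared jump increments, and the cross term is exactly the paper's, and your treatments of the first two pieces match in substance: the paper simply cites Theorem 4 of \cite{corcuera2006power} for $A_n=\int_0^1\sigma_s^2\,\intdiff s+\mathcal{O}_{\P}(n^{-1/2})$ where you sketch the localisation and Breuer--Major argument, and for $C_n$ the paper invokes finiteness of the quadratic variation of the jump semimartingale where you interpolate via the $p$-variation bound of Assumption \ref{AnnahmeSpruenge}; both give $\sum_j(\Delta_{n,j}^{(1)}J)^2=\mathcal{O}_{\P}(1)$. The genuine difference is the cross term. The paper notes that $B_n$ has expectation zero (by independence and centredness of $Y$) and bounds its unconditional variance with a crude Cauchy--Schwarz estimate, obtaining $B_n=\mathcal{O}_{\P}(n^{H-1/2})$, which tends to zero for $H<1/2$ and hence suffices for the consistency statement. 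You instead condition on the jump path, use that $S_n=\sum_j\Delta_{n,j}^{(1)}J\,\Delta_{n,j}^{(1)}Y$ is conditionally centred Gaussian, and control the conditional variance through the $\ell^1$-summability of the fractional-Gaussian-noise autocorrelations (valid precisely for $H<1/2$) via a Schur-type bound, yielding the sharper $B_n=\mathcal{O}_{\P}(n^{H-1})$. This buys you something real: $n^{H-1/2}$ actually exceeds $n^{\max(2H-1,-1/2)}$ on all of $(0,1/2)$, so the paper's cross-term bound delivers consistency but not literally the remainder order stated in \eqref{intvola}, whereas your bound $n^{H-1}$ is dominated by $n^{\max(2H-1,-1/2)}$ throughout and thus makes the displayed rate exact. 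The price is a slightly heavier argument (you must also absorb the Love--Young correction $\Delta_{n,j}^{(1)}Y-\sigma_{(j-1)/n}\Delta_{n,j}^{(1)}B^H$ into the conditional covariance, which costs only an extra $n^{-\alpha}$ and is harmless); you should also note that your Schur bound needs $\sum_j|\Delta_{n,j}^{(1)}J|<\infty$ or $\sum_j(\Delta_{n,j}^{(1)}J)^2=\mathcal{O}_{\P}(1)$, which you have from the finite-variation assumption, in line with what the paper also implicitly requires of its ``general'' jump component.
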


This result is based on central limit theorems for power variations of integral fractional processes from \cite{corcuera2006power} and some estimates for the jumps. In fact, the possible robustness with respect to jumps was already mentioned by \cite{corcuera2006power}. Contrary to the situation for $H=1/2$, the mean of rescaled squared increments consistently estimates the integrated squared volatility, also in the presence of jumps. This works without truncation. In particular, if $H<1/4$, the central limit theorem for the integrated squared volatility with the standard optimal rate $n^{1/2}$ carries over. The limit theorem is then completely analogous to the continuous case when $(Y_t)$ is observed and quite general jumps in $(X_t)$ are asymptotically negligible. In particular, semimartingale jumps satisfy the assumptions. If $H>1/2$, the statistic left-hand side in \eqref{intvola} would diverge in the presence of jumps instead. For this reason, we use smaller powers $p$ for our statistical methods in \eqref{Tn} and \eqref{Rn}, which should work for all $H\in(0,1)$. While smaller $H$ and rougher paths of $(Y_t)$ make the detection of jumps more difficult, the effect of smaller $H$ on the robustness is positive instead. These effects can be seen as two sides of the same coin. In Figure \ref{Fig:paths} we see that the influence of increments with jumps becomes smaller compared to other increments when $H$ is smaller. It is thus natural that robustness of non-adjusted statistics -- when we ignore jumps -- is more likely to hold for smaller $H$.

Next, we point out that a standard estimator of the Hurst exponent is robust with respect to jumps without adjustments in the rough case when $H$ is small.
\begin{prop}\label{proprobust2}The estimator of the Hurst exponent,
\begin{align}\label{Hest}\hat H_n=\frac{1}{2\log(2)}\log\bigg(\frac{\sum_{j=0}^{n-2}\big(X_{(j+2)/n}-X_{j/n}\big)^2}{\sum_{j=0}^{n-1 }\big(X_{(j+1)/n}-X_{j/n}\big)^2}\bigg)\,,\end{align}
satisfies under Assumption \ref{Annahmen} for any $H\in(0,1/2)$, and for a jump process $(J_t)$, which is an It\^{o} semimartingale with finite quadratic variation, bounded jump sizes and independent of $(Y_t)$, that
\begin{align}\label{hatH}\hat H_n-H=\mathcal{O}_{\P}\big(n^{\max((2H-1), -1/2)}\big)\,.\end{align}
In particular, $\hat H_n$ is a consistent estimator for $H$, $\hat H_n\stackrel{\P}{\rightarrow} H$, as $n\to\infty$.
In case that $H\ge 1/2$, and if $(J_t)$ is an It\^{o} semimartingale with bounded jump sizes, it holds that $\hat H_n\stackrel{\P}{\rightarrow}1/2$, as $n\to\infty$.
\end{prop}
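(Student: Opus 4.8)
The estimator $\hat H_n$ is the logarithm of a ratio of two realised variances, one formed from increments over the span $2/n$ (numerator) and one from increments over the span $1/n$ (denominator), divided by $2\log(2)$. The key structural observation is that both realised variances carry the same unknown scaling factor $n^{1-2H}$, which cancels in the ratio, so that $\hat H_n$ requires no knowledge of $H$. The plan is therefore to show that, after multiplication by $n^{2H-1}$, the denominator converges to $\int_0^1\sigma_s^2\,\intdiff s$ and the numerator converges to $2^{2H}\int_0^1\sigma_s^2\,\intdiff s$, so that the ratio tends to $2^{2H}$ and $\frac{1}{2\log(2)}\log(2^{2H})=H$.

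For the case $H\in(0,1/2)$, the denominator is exactly $\sum_{k=1}^n(\Delta_{n,k}^{(1)}X)^2$, so Proposition \ref{proprobust1} gives $n^{2H-1}\sum_{k=1}^n(\Delta_{n,k}^{(1)}X)^2=\int_0^1\sigma_s^2\,\intdiff s+\mathcal{O}_{\P}(n^{\max((2H-1),-1/2)})$. For the numerator I would split the sum over $j$ into its even-indexed and odd-indexed sub-sums. Within each sub-sum the span-$2/n$ increments are non-overlapping and coincide, up to a shift of the starting point by $1/n$, with the first-order increments of $X$ sampled at the coarser resolution $m=\lfloor n/2\rfloor$. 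Applying Proposition \ref{proprobust1} at resolution $m$ to each sub-sum shows that $m^{2H-1}$ times each sub-sum converges to $\int_0^1\sigma_s^2\,\intdiff s$ with remainder $\mathcal{O}_{\P}(m^{\max((2H-1),-1/2)})$; since $n^{2H-1}=2^{2H-1}m^{2H-1}(1+o(1))$, each sub-sum contributes $2^{2H-1}\int_0^1\sigma_s^2\,\intdiff s$ to $n^{2H-1}$ times the numerator, and the two sub-sums together produce the factor $2^{2H}$ while preserving the rate $n^{\max((2H-1),-1/2)}$. A first-order expansion of the logarithm of the ratio, using $\log((2^{2H}I+\mathcal{O}_{\P}(r_n))/(I+\mathcal{O}_{\P}(r_n)))=2H\log(2)+\mathcal{O}_{\P}(r_n)$ with $I=\int_0^1\sigma_s^2\,\intdiff s>0$ and $r_n=n^{\max((2H-1),-1/2)}$, then delivers \eqref{hatH}.

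For the case $H\ge1/2$ I would decompose each increment of $X$ into its $Y$-part and its $J$-part and expand the two realised variances accordingly. The $Y$-contribution to the denominator is of order $n^{1-2H}$, hence negligible when $H>1/2$, while for $H=1/2$ the process $(Y_t)$ is a continuous semimartingale contributing $\int_0^1\sigma_s^2\,\intdiff s$; the cross terms between $Y$ and $J$ are controlled by the Cauchy--Schwarz inequality, being of order $n^{(1-2H)/2}=\KLEINO_{\P}(1)$ for $H>1/2$ and vanishing for $H=1/2$ by the independence of $(Y_t)$ and $(J_t)$. Hence the denominator converges to $[J]_1$ for $H>1/2$ and to $\int_0^1\sigma_s^2\,\intdiff s+[J]_1$ for $H=1/2$. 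For the numerator I would use the identity $(d_j+d_{j+1})^2=d_j^2+d_{j+1}^2+2d_jd_{j+1}$, with $d_j$ the span-$1/n$ increment, so that the span-$2/n$ realised variance equals the sum of two shifted copies of the span-$1/n$ realised variance plus twice the lag-one autocovariation $\sum_jd_jd_{j+1}$. As the latter is asymptotically negligible, the numerator converges to exactly twice the limit of the denominator, the ratio tends to $2$, and $\hat H_n\stackrel{\P}{\rightarrow}\frac{1}{2\log(2)}\log(2)=1/2$.

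The main obstacle is the numerator analysis in the regime $H\ge1/2$: establishing that the span-$2/n$ realised variance of the It\^o semimartingale $(J_t)$ converges to $2[J]_1$ requires showing that the lag-one autocovariation $\sum_j\Delta_{n,j}^{(1)}J\,\Delta_{n,j+1}^{(1)}J$ is asymptotically negligible even for jumps of infinite activity. For the continuous martingale part this is the vanishing of the lag-one autocovariance of disjoint increments; for the jump part it rests on the fact that a single jump contributes to only one span-$1/n$ increment, so that products of adjacent increments never capture the same jump twice, and this is where bounded jump sizes and standard semimartingale estimates in the spirit of \cite{jacodprotter} enter. By contrast, the even/odd reduction to Proposition \ref{proprobust1} renders the case $H<1/2$ essentially a bookkeeping exercise once the coarser-resolution convergence and the expansion of the logarithm are in place.
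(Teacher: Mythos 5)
Your proposal is correct and follows essentially the same route as the paper: apply Proposition \ref{proprobust1} (and its lag-$2$ analogue, which you realize concretely via the even/odd split into non-overlapping coarse-grid increments) to get the limits $2^{2H}\int_0^1\sigma_s^2\,\intdiff s$ and $\int_0^1\sigma_s^2\,\intdiff s$, then Taylor-expand the logarithm of the ratio; for $H\ge 1/2$ the paper likewise reduces everything to showing the lag-one autocovariation of the jump increments is negligible, handled exactly as you indicate by splitting into finitely many large jumps (which asymptotically never hit adjacent increments) and compensated small jumps with uncorrelated martingale increments. No substantive gap.
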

We use the notion of It\^{o} semimartingales in the sense of Section 2.1.4 of \cite{jacodprotter}. These are semimartingales whose characteristics are absolutely continuous with respect to the Lebesgue measure, such that the process admit a representation as Grigelionis processes in the sense of \cite{kallsen}.

Estimator \eqref{Hest} is a rather obvious estimator for $H$ based on power variations, and contained in the class of filtering estimators by \cite{coeurjolly2001estimating} setting $k=2$ and $a=(1,-1)$ in his general statistics. It is hence often used. With the estimate of $H$, a plug-in approach allows estimating the squared volatility. For $H<1/4$, the jump-robust estimator of the Hurst exponent attains the optimal rate of convergence, $n^{-1/2}$, and central limit theorems proved for the continuous case apply. Statistic \eqref{Hest} is indeed a proper estimator, also for unknown $H$ and $(\sigma_t)$, since $\hat H_n$ is a function of the observations only. If $(X_t)$ is used to model the volatility, we conclude that inference on the Hurst exponent under rough volatility is robust with respect to general additive jumps. This is an important insight for the current research on rough volatility. The second result in Proposition \ref{proprobust2}, that $\hat H_n\stackrel{\P}{\rightarrow}1/2$, also in the non-robust case of $H> 1/2$, is crucial to conclude that a small estimate of $H$ cannot be produced by jumps and a smoother continuous component. We conjecture that the robustness with respect to jumps can be extended even to a larger range of values of $H$, when using power variations with smaller powers and second-order increments. Since such results require more restrictive assumptions on the jumps and refined proofs, where less existing results can be exploited, we leave this conjecture open for future research. Here we focus on the impact of jumps on the standard methods which are typically used when jumps are ignored. 
Nevertheless, the robustness is only valid for small $H$ and our simulations and a data example show that jumps influence the finite-sample estimation. This provides additional motivation to construct jump-detection methods to filter out jumps in case that they are considered to be a nuisance quantity. The methods presented in the upcoming section allow testing for jumps and moreover to locate them and thus also to filter out jumps. Being aware of possible jumps, filtering increments with jumps based on our new methods and applying the standard estimators afterwards hence provides a tractable approach. In particular, for larger $H$, when the non-adjusted estimators for the Hurst exponent and the volatility do not work well, our methods to detect and filter jumps attain a particularly good performance. In this sense, the two reverse effects can be combined to accurately solve the problem of statistical inference across all model specifications.

\section{Asymptotic properties of the tests\label{sec:5}}
\subsection{Asymptotic distribution under the null hypothesis\label{sec:5.1}}
In this section, we present our original statistical methods and state the main results of this paper. For the tests, we establish the asymptotic behaviour of the statistics $T_n$ from \eqref{Tn} and $R_n$ from \eqref{Rn}.
Our first main result clarifies the asymptotic distributions of $T_n$ and $R_n$ under the null hypothesis that there are no jumps.
\begin{theo}\label{Gumbelkonvergenz}
Assume that $(\sigma_t)_{t\in[0,1]}$ satisfies Assumption  \ref{Annahmen}. In \eqref{Tn} and \eqref{Rn}, let $p\in(0,1]$ and set $h_n\propto n^{\beta}$, for $0<\beta<1$.
\begin{enumerate}
    \item[(i)] Under $H_0$, it holds that
\begin{align*}
\lim_{n\to\infty}\mathbb{P}\left( a_n\left(T_n-b_n \right)\leq x \right)=\exp(-\exp(-x)),\quad \text{ for } x\in\mathbb{R},
\end{align*}
with the sequences 
  \begin{align*}
a_n=\sqrt{2\log(n)},~\text{and}~~ b_n=\sqrt{2\log(n)}-\frac{\log(\log( n))+\log(4\pi)}{2\sqrt{2\log(n)}}\,.
\end{align*}

    \item[(ii)] Under $H_0$, it holds that
\begin{align*}
\lim_{n\to\infty}\mathbb{P}\left( c_n\left(R_n-d_n \right)\leq x \right)=\exp(-\exp(-x)),\quad\text{ for } x\in\mathbb{R},
\end{align*}
with the sequences 
  \begin{align*}
	c_n=\sqrt{2\log(2n)},~\text{and}~~ d_n=\sqrt{2\log(2n)}-\frac{\log(\log(2n))+\log(4\pi)}{2\sqrt{2\log(2n)}}.
\end{align*}
\end{enumerate}
\end{theo}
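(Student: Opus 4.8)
The plan is to reduce both statistics to the maximum of a fixed stationary, standardized Gaussian sequence and then invoke extreme value theory. Under $H_0$ we have $X=Y$, and the key structural fact is that the scaling $n^{H}$ and the $H$-dependent constant $C_H$ cancel between numerator and denominator. Writing $\zeta_j:=n^{H}\Delta_{n,j}^{(2)}B^H/\sqrt{C_H}$ for the standardized second-order increments of the driving fractional Brownian motion, self-similarity of $B^H$ shows that $(\zeta_j)_j$ has, for \emph{every} $n$, the law of the fixed sequence $(B^H_j-2B^H_{j-1}+B^H_{j-2})/\sqrt{C_H}$, so each $\zeta_j$ is exactly $\mathcal{N}(0,1)$ and the array is genuinely stationary. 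Freezing $\sigma$ locally gives $\Delta_{n,j}^{(2)}Y\approx \sigma_{j/n}n^{-H}\sqrt{C_H}\,\zeta_j$, while the power-variation denominator over the block $k$ containing $j$ satisfies, by a law-of-large-numbers argument using $\E[|\zeta|^p]=C_p$, that $(h_n^{-1}C_p^{-1}\sum_i|\Delta_{n,i}^{(2)}Y|^p)^{1/p}\approx n^{-H}\sigma_{k/n}\sqrt{C_H}$. Since $\sigma_{j/n}/\sigma_{k/n}\to 1$ within a block, the ratio in \eqref{Tn} is $\approx\zeta_j$, so that $T_n\approx\max_j\zeta_j$ and, analogously, $R_n\approx\max_j|\zeta_j|$.

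The second step is the extreme value theory for $(\zeta_j)$. I would first compute the autocovariance $r_\ell=\mathrm{Cov}(\zeta_0,\zeta_\ell)$, which is, up to the normalisation $C_H$, a fourth-order difference of $\ell\mapsto|\ell|^{2H}$ and hence decays like $\ell^{2H-4}$; in particular $r_\ell\to 0$ and $r_\ell\log\ell\to 0$, so Berman's condition holds for every $H\in(0,1)$. The classical extreme value theorem for weakly dependent stationary Gaussian sequences then yields $a_n(\max_{j\le n}\zeta_j-b_n)\to$ Gumbel with the stated $a_n,b_n$, proving part (i) in the idealised setting. For $R_n$, the two-sided maximum $\max_j|\zeta_j|=\max_j(\zeta_j\vee(-\zeta_j))$ has tail $\P(|\zeta_j|>u)\sim 2(1-\Phi(u))$, which doubles the effective number of exceedances and replaces $n$ by $2n$ in the norming constants, giving part (ii) with $c_n,d_n$.

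The main work, and the principal obstacle, is to upgrade the heuristic ``$\approx$'' to the quantitative statements $a_n(T_n-\max_j\zeta_j)\stackrel{\P}{\to}0$ and its analogue for $R_n$, since the Gumbel fluctuations live on the scale $a_n^{-1}\propto(\log n)^{-1/2}$ while the maximum sits at height $\sqrt{2\log n}$. A relative normalisation error $\eta_j$ therefore contributes $\eta_j\sqrt{2\log n}$, so I must establish $\max_j|\eta_j|=\KLEINO_{\P}((\log n)^{-1})$ uniformly in $j$. I would split $\eta_j$ into: (a) the Hölder bias from freezing $\sigma$, of order $(h_n/n)^{\alpha}\propto n^{-\alpha(1-\beta)}$; (b) the error in replacing $\Delta_{n,j}^{(2)}Y$ by $\sigma_{j/n}\Delta_{n,j}^{(2)}B^H$, controlled through Young/Hölder estimates for the Riemann--Stieltjes integral and of order $n^{-\alpha}$; and (c) the stochastic fluctuation of the block power variation around its mean, which by a maximal inequality over the $m\propto n/h_n$ blocks is $\mathcal{O}_{\P}(\sqrt{\log n}\,h_n^{-1/2})=\mathcal{O}_{\P}(\sqrt{\log n}\,n^{-\beta/2})$. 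Each of these is $\KLEINO_{\P}((\log n)^{-1})$ precisely because $0<\beta<1$ and $\alpha>0$, which is exactly where the bandwidth assumption enters. The delicate points are obtaining these bounds \emph{uniformly} in $j$ (hence the need for Gaussian maximal inequalities and union bounds, and care with the small power $p\le 1$ in the denominator, for which only finite fractional moments are available) and checking that the within-block dependence between the increment in the numerator and the increments entering the denominator is asymptotically negligible. Once $\max_j|\eta_j|=\KLEINO_{\P}((\log n)^{-1})$ is secured, a Slutsky-type argument transfers the Gumbel limits from $\max_j\zeta_j$ and $\max_j|\zeta_j|$ to $T_n$ and $R_n$, completing both parts.
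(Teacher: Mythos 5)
Your proposal is correct and follows essentially the same route as the paper: reduce $T_n$ and $R_n$ to the maximum of the standardized second-order increments $n^H\Delta_{n,j}^{(2)}B^H/\sqrt{C_H}$ (stationary by self-similarity, with covariance decaying like $\ell^{2H-4}$ so that Berman's condition applies), and control the uniform relative normalisation error via the H\"older bias from freezing $\sigma$, the Love--Young estimate, and a concentration bound for the block power variations over the $n/h_n$ blocks (the paper uses a Rosenthal-type inequality with arbitrarily high moments where you invoke a maximal inequality, to the same effect). Your identification of the required accuracy $\KLEINO_{\P}((\log n)^{-1})$ for the relative error matches the paper's final Slutsky step, and the polynomial rates indeed deliver it.
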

The pointwise convergence of the cumulative distribution function shows that convergence in distribution to a standard Gumbel limit distribution is satisfied. The sequences $(a_n)_{n\in\N}$ and $(b_n)_{n\in\N}$ are identical to the ones in the Gumbel convergence of the maximum of i.i.d.\ standard normal random variables and thus also agree to the ones occurring in \cite{leemykland}. For the sequences $(c_n)_{n\in\N}$ and $(d_n)_{n\in\N}$, factors $n$ are replaced by $2n$, similar as in the Gumbel convergence of the maximum of absolute values of i.i.d.\ standard normal random variables. Let $q_{\alpha}$ be the $(1-\alpha)$-quantile of the Gumbel distribution:
\begin{align*}
q_{\alpha}=-\log\left(-\log\left(1-\alpha\right)\right).
\end{align*}
\begin{enumerate}
\setlength{\itemsep}{.5em}
\item[(T1)] Testing for positive jumps, we reject $H_0$, if $a_n(T_n-b_n)\geq q_{\alpha}$.
\item[(T2)] Testing for jumps, we reject $H_0$, if  $c_n(R_n-d_n)\geq q_{\alpha}$.
\end{enumerate}
For these tests, Theorem \ref{Gumbelkonvergenz} readily yields the following asymptotic properties.
\begin{cor}\label{asymptotic level}
\begin{enumerate}
\item[(i)] The test $\operatorname{(T1)}$ for $H_0$ against $H_1:\omega\in\Omega_+$ has asymptotic level $\alpha$, as $n\to\infty$.
\item[(ii)] The test $\operatorname{(T2)}$ for $ H_0$ against $H_1:\omega\in\Omega_1$ has asymptotic level $\alpha$, as $n\to\infty$.
\end{enumerate}
\end{cor}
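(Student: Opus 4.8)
The plan is to derive both parts as immediate consequences of Theorem \ref{Gumbelkonvergenz}, since by construction the tests are calibrated precisely against the Gumbel limit. Recall that the asymptotic level of a test is the limiting probability of rejecting $H_0$ when the null hypothesis holds, so under $H_0$ I would compute $\lim_{n\to\infty}\P(a_n(T_n-b_n)\geq q_\alpha)$ for part (i), and the analogous quantity for $R_n$ in part (ii).

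First I would rewrite the rejection probability in terms of the cumulative distribution function, namely $\P(a_n(T_n-b_n)\geq q_\alpha)=1-\P(a_n(T_n-b_n)<q_\alpha)$. Since the Gumbel distribution function $x\mapsto\exp(-\exp(-x))$ is continuous on all of $\R$, the threshold $q_\alpha$ is automatically a continuity point of the limit, and Theorem \ref{Gumbelkonvergenz}(i) yields $\lim_{n\to\infty}\P(a_n(T_n-b_n)<q_\alpha)=\exp(-\exp(-q_\alpha))$.

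The key, and essentially only, computation is to verify that this limit equals $1-\alpha$ by the very definition of the quantile $q_\alpha$. Substituting $q_\alpha=-\log(-\log(1-\alpha))$ gives $\exp(-q_\alpha)=-\log(1-\alpha)$ and hence $\exp(-\exp(-q_\alpha))=\exp(\log(1-\alpha))=1-\alpha$. Consequently the limiting rejection probability under $H_0$ is $1-(1-\alpha)=\alpha$, which establishes (i). Part (ii) follows by the identical argument, replacing $T_n$, $a_n$, $b_n$ by $R_n$, $c_n$, $d_n$ and invoking Theorem \ref{Gumbelkonvergenz}(ii) in place of (i).

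Since the argument is a direct quantile transformation of the already established Gumbel limit, there is no genuine obstacle. The only point requiring care is that the distributional convergence in Theorem \ref{Gumbelkonvergenz} must be applied at the specific threshold $q_\alpha$, which is legitimate precisely because the limiting Gumbel distribution function is continuous everywhere, so $q_\alpha$ poses no issue as a point of evaluation.
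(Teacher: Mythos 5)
Your proposal is correct and matches the paper's (implicit) reasoning: the paper states the corollary as an immediate consequence of Theorem \ref{Gumbelkonvergenz} without writing out a separate proof, and your quantile computation $\exp(-\exp(-q_\alpha))=1-\alpha$ together with the continuity of the Gumbel distribution function is exactly the argument being invoked.
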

\subsection{Consistency and rate of convergence under the alternative\label{sec:5.2}}
For the test, we next clarify the behaviour of the statistics under the alternative hypothesis.
\begin{theo}\label{Divergenz der Teststatistik}

    Assume that $(\sigma_t)_{t\in[0,1]}$ satisfies Assumption \ref{Annahmen} and that Assumption \ref{AnnahmeSpruenge} is satisfied for some $p \in (0,1]$. Choose $\beta\in (pH,1)$ and $h_n\propto n^{\beta}$.

\begin{enumerate}
    \item[(i)] Under the alternative hypothesis $H_1:\omega\in\Omega_+$, it holds true that
    \begin{eqnarray*}
    \lim_{n\to\infty}\mathbb{P}\left( n^{-\gamma}a_n\left(T_n-b_n \right)\leq L \right)=0,\quad \text{ for all }\quad L\in\mathbb{R} \quad \text{ and }\quad 0<\gamma<H.
    \end{eqnarray*}
    \item[(ii)] Under the alternative hypothesis $H_1:\omega\in\Omega_1$, it holds true that
    \begin{eqnarray*}
    \lim_{n\to\infty}\mathbb{P}\left(n^{-\gamma} c_n\left(R_n-d_n \right)\leq L \right)=0,\quad\text{ for all }\quad L\in\mathbb{R}\quad \text{ and }\quad 0<\gamma<H.
    \end{eqnarray*}
\end{enumerate}
\end{theo}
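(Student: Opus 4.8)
The plan is to show that a single jump forces the statistic to diverge at the polynomial rate $n^{H}$, which overwhelms the slowly growing centring and scaling sequences. I would argue on the event $\Omega_+$ (resp.\ $\Omega_1$) and fix a jump time $\tau\in(0,1]$ with $\Delta J_\tau=c$, where $c>0$ for $T_n$ and $c\neq0$ for $R_n$. For $n$ large the index $k^{*}=k^{*}(n)=\lceil n\tau\rceil$ satisfies $\tau\in((k^{*}-1)/n,k^{*}/n]$, so the jump enters the second-order increment $\Delta_{n,k^{*}}^{(2)}X$ only through $X_{k^{*}/n}$, with coefficient $+1$; decomposing $X=Y+J$ gives $\Delta_{n,k^{*}}^{(2)}X=\Delta_{n,k^{*}}^{(2)}Y+c$. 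Since $\Delta_{n,j}^{(2)}Y$ is centred Gaussian with variance of order $n^{-2H}$, a union bound together with Gaussian tail estimates yields $\max_{2\le j\le n}|\Delta_{n,j}^{(2)}Y|=\mathcal{O}_{\P}(n^{-H}\sqrt{\log n})=\KLEINO_{\P}(1)$, so the numerator at $k^{*}$ converges to $c$ in probability.

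The crux is to bound the normalising denominator of the block containing $k^{*}$ and to show it stays of order $n^{-H}$ despite the jump. For $p\le1$ the elementary inequality $\bigl||a+b|^p-|a|^p\bigr|\le|b|^p$ yields, over any block $i\in\{(k-1)h_n+2,\dots,kh_n\}$, that $\bigl|\sum_i|\Delta_{n,i}^{(2)}X|^p-\sum_i|\Delta_{n,i}^{(2)}Y|^p\bigr|\le\sum_i|\Delta_{n,i}^{(2)}J|^p$. Assumption \ref{AnnahmeSpruenge} and $|\Delta_{n,i}^{(2)}J|^p\le|\Delta_{n,i}^{(1)}J|^p+|\Delta_{n,i-1}^{(1)}J|^p$ bound the jump remainder over any block by $\sum_{i=1}^n|\Delta_{n,i}^{(2)}J|^p\le2\sum_{k=1}^n|\Delta_{n,k}^{(1)}J|^p=\mathcal{O}_{\P}(1)$. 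For the continuous part, the uniform variance bound gives $\E|\Delta_{n,i}^{(2)}Y|^p\le Cn^{-pH}$, so Markov's inequality yields $\sum_i|\Delta_{n,i}^{(2)}Y|^p=\mathcal{O}_{\P}(h_nn^{-pH})$. Since $h_nn^{-pH}\propto n^{\beta-pH}\to\infty$ because $\beta>pH$, the $\mathcal{O}_{\P}(1)$ jump remainder is asymptotically negligible, whence $h_n^{-1}C_p^{-1}\sum_i|\Delta_{n,i}^{(2)}X|^p=\mathcal{O}_{\P}(n^{-pH})$ for the block containing $k^{*}$; raising to the power $1/p$, the denominator is $\mathcal{O}_{\P}(n^{-H})$.

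Combining both parts, taking $j=k^{*}$ inside the maximum shows that for every $\varepsilon>0$ there exist $c'>0$ and $N\in\N$ with $\P(T_n\ge c'n^{H})\ge1-\varepsilon$ for all $n\ge N$; the same bound holds for $R_n$ with $c$ replaced by $|c|$. Since $b_n,d_n=\mathcal{O}(\sqrt{\log n})$, on this event $T_n-b_n\ge(c'/2)n^{H}$ for $n$ large, so $n^{-\gamma}a_n(T_n-b_n)\ge(c'/2)\sqrt{2\log n}\,n^{H-\gamma}\to\infty$ whenever $0<\gamma<H$. Letting $\varepsilon\downarrow0$ gives $\P(n^{-\gamma}a_n(T_n-b_n)\le L)\to0$ for every $L\in\R$, and the identical argument with $(c_n,d_n)$ in place of $(a_n,b_n)$ settles the claim for $R_n$.

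I expect the main obstacle to be the control of the denominator in the exceptional block that contains the jump: one must verify that the jump's $\mathcal{O}_{\P}(1)$ contribution to the $p$-power sum is swamped by the diverging continuous power variation of order $n^{\beta-pH}$. This is exactly the mechanism that renders the power-variation normalisation jump-robust, and it is the place where the hypothesis $pH<\beta$ (achievable by taking $p$ small) is essential; the remaining steps, namely the convergence of the numerator for the random index $k^{*}$ and the conversion into the stated probability limit, are then routine bookkeeping.
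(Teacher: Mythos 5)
Your argument is correct in substance but follows a genuinely different route from the paper's. The paper lower-bounds $T_n$ via $\max_{k,j}(x_{k,j}+y_{k,j})\geq \max_{k,j}x_{k,j}+\min_{k,j}y_{k,j}$, splitting off the jump part and the continuous part over \emph{all} indices; it then invokes the full machinery built for the null hypothesis -- Lemma \ref{Approximation}(ii) (uniform, two-sided consistency of $\widehat{\sigma^p_{(kh_n)/n}}$ under jumps, which is where Lemma \ref{Robustheit} and the condition $pH<\beta$ enter) together with Lemma \ref{Minimum} -- to show that the continuous minimum, after the $a_n(\cdot+b_n)$ transformation, is $\mathcal{O}_{\P}(1)$, and a pathwise $\liminf$ argument for $\max_j\Delta_{n,j}^{(2)}J$. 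You instead bound $T_n$ from below by the single ratio at the jump index $k^*$, which lets you bypass the extreme-value theory for minima and Lemma \ref{Approximation}(ii) entirely: you only need a one-sided (upper) bound on the denominator of the one block containing $k^*$, obtained by elementary Markov/Gaussian estimates, plus the observation that the jump contribution $\mathcal{O}_{\P}(1)$ to the block $p$-power sum is dominated by the diverging continuous part of order $h_nn^{-pH}=n^{\beta-pH}$. This is more self-contained and isolates exactly where $pH<\beta$ is needed; what the paper's decomposition buys is that it recycles lemmas already proved for Theorem \ref{Gumbelkonvergenz} and handles a general c\`adl\`ag jump component cleanly via the $\liminf$, whereas your identity $\Delta_{n,k^*}^{(2)}X=\Delta_{n,k^*}^{(2)}Y+c$ is exact only for a single jump and should be replaced by $\liminf_n\Delta_{n,k^*}^{(2)}J=\Delta J_\tau$ in general (a cosmetic fix). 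Note also that your step $\sum_{k}|\Delta_{n,k}^{(1)}J|^p=\mathcal{O}_{\P}(1)$ reads Assumption \ref{AnnahmeSpruenge} with the exponent $p$ actually used in the statistic; this is precisely what the paper's own Lemma \ref{Robustheit} does, so you are on the same footing there.
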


Theorem \ref{Divergenz der Teststatistik} readily implies the following asymptotic properties of the Gumbel tests.
\begin{cor}\label{asymptotic power}
\begin{enumerate}
\item[(i)] The test $\operatorname{(T1)}$ for $H_0$ against $H_1:\omega\in\Omega_+$ is consistent, that is, it has asymptotic power $1$, as $n\to\infty$.
\item[(ii)] The test $\operatorname{(T2)}$ for $H_0$ against $H_1:\omega\in\Omega_1$ is consistent, that is, it has asymptotic power $1$, as $n\to\infty$.
\end{enumerate}
\end{cor}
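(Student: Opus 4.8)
The plan is to show that under the alternative the maximum $T_n$ is, with probability tending to one, bounded below by a constant multiple of $n^{H}$; then $a_n(T_n-b_n)$ grows like $n^{H}\sqrt{2\log n}$, which dominates $n^{\gamma}$ for every $\gamma<H$, so that $\P(n^{-\gamma}a_n(T_n-b_n)\le L)\to 0$ for each fixed $L$. The entire argument transfers to $R_n$ upon replacing $\Delta J_\tau$ by $|\Delta J_\tau|$ and $(a_n,b_n)$ by $(c_n,d_n)$, since $c_n\sim a_n$ and $d_n\sim b_n$. The mechanism is that a single jump contributes a numerator of constant order to the statistic, whereas the local power-variation denominator remains of the continuous order $n^{-H}$.

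First I would localise a jump. On $\Omega_+$ pick $\tau\in(0,1]$ with $\Delta J_\tau>0$, let $j_0=j_0(n)$ satisfy $\tau\in((j_0-1)/n,j_0/n]$, and let $k_0$ be the window containing $j_0$ (which lies in the interior for $n$ large). The contribution of this jump to $J$ is $\Delta J_\tau\,\1\{\cdot\ge\tau\}$, whose second-order difference is supported on the indices $j_0$ and $j_0+1$ with values $+\Delta J_\tau$ and $-\Delta J_\tau$. Using right-continuity of the c\`adl\`ag path one gets $\Delta_{n,j_0}^{(2)}J\to\Delta J_\tau$, while the continuous part is negligible, $\Delta_{n,j_0}^{(2)}Y=\mathcal{O}_{\P}(n^{-H})=\KLEINO_{\P}(1)$; hence $\Delta_{n,j_0}^{(2)}X\to\Delta J_\tau>0$. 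For $R_n$ I would instead use $|\Delta_{n,j_0}^{(2)}X|\to|\Delta J_\tau|>0$.

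The crux is to bound the denominator $D_{n,k_0}=(h_n^{-1}C_p^{-1}\sum_{i}|\Delta_{n,i}^{(2)}X|^{p})^{1/p}$ of window $k_0$ from above by a multiple of $n^{-H}$, i.e.\ to prove that the local normalisation is jump-robust. Since $0<p\le1$, subadditivity of $x\mapsto|x|^{p}$ gives $|\Delta_{n,i}^{(2)}X|^{p}\le|\Delta_{n,i}^{(2)}Y|^{p}+|\Delta_{n,i}^{(2)}J|^{p}$, and writing $\Delta_{n,i}^{(2)}J=\Delta_{n,i}^{(1)}J-\Delta_{n,i-1}^{(1)}J$ shows that the jump part summed over any window is bounded by $2\sum_k|\Delta_{n,k}^{(1)}J|^{p}$, which is finite almost surely by Assumption \ref{AnnahmeSpruenge} (recall $p\le\tilde p$). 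The continuous part has $\sum_{i}|\Delta_{n,i}^{(2)}Y|^{p}$ of exact order $h_n n^{-pH}$ by the power-variation asymptotics for integral fractional processes. After the factor $h_n^{-1}$, the jump contribution is $\mathcal{O}_{\P}(h_n^{-1})=\KLEINO_{\P}(n^{-pH})$, precisely because $h_n\propto n^{\beta}$ with $\beta>pH$; consequently the bracket is of order $n^{-pH}$ and $D_{n,k_0}\le C n^{-H}$ with probability tending to one.

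Combining the two bounds yields $T_n\ge \Delta_{n,j_0}^{(2)}X/D_{n,k_0}\ge (\Delta J_\tau/(2C))\,n^{H}$ on an event of probability tending to one, whence $n^{-\gamma}a_n(T_n-b_n)\ge c\,\sqrt{2\log n}\,n^{H-\gamma}\to\infty$ for every $\gamma<H$, giving part (i); the identical reasoning with $|\Delta J_\tau|$, $c_n$, $d_n$ gives part (ii). I expect the denominator step to be the main obstacle: one must pin $h_n^{-1}\sum_i|\Delta_{n,i}^{(2)}Y|^{p}$ to order $n^{-pH}$ in probability (not merely in mean), while simultaneously ruling out that the finitely many inflated jump increments overwhelm the sum. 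This is exactly where subadditivity of $|\cdot|^{p}$ for $p\le1$, the finite $p$-variation of Assumption \ref{AnnahmeSpruenge}, and the bandwidth condition $pH<\beta$ are each indispensable.
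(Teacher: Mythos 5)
Your argument is correct and shares the paper's central mechanism: the jump survives in the numerator while the local power-variation denominator stays of order $n^{-H}$, because subadditivity of $|\cdot|^p$ for $p\le 1$, the finite $p$-variation of Assumption \ref{AnnahmeSpruenge} and the bandwidth condition $pH<\beta$ make the jump contribution to the window sum $\mathcal{O}_{\P}(h_n^{-1})=\KLEINO_{\P}(n^{-pH})$ -- this is precisely Lemma \ref{Robustheit} combined with Lemma \ref{GK}(ii). Where you genuinely deviate is in handling the continuous part of the numerator. The paper lower-bounds $T_n$ by $\max_{k,j}(\widehat{\sigma_{(kh_n)/n}^p})^{-1/p}n^H\Delta_{n,j}^{(2)}J+\min_{k,j}(\widehat{\sigma_{(kh_n)/n}^p})^{-1/p}n^H\Delta_{n,j}^{(2)}Y$ and must then control the \emph{minimum} over all $n$ standardized continuous second-order increments, which it does via the Gumbel-type convergence of rescaled minima (Lemma \ref{Minimum}) fed by the uniform approximation of Lemma \ref{Approximation}(ii); you instead lower-bound the maximum by the single ratio at the jump index $j_0$, for which the pointwise bound $\Delta_{n,j_0}^{(2)}Y=\mathcal{O}_{\P}(n^{-H})$ suffices. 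Your route is more elementary at that step, dispensing with the extreme-value input entirely, and it yields the same rate $n^{H-\gamma}\sqrt{2\log n}$ for local alternatives; the paper's global $\max+\min$ decomposition is slightly wasteful but recycles machinery already built for Theorem \ref{Gumbelkonvergenz}. One point to state carefully in your version: on $\Omega_+$ the path may carry infinitely many jumps, so the identification $\Delta_{n,j_0}^{(2)}J\to\Delta J_\tau$ cannot come from isolating the contribution $\Delta J_\tau\1\{\cdot\ge\tau\}$ alone but must be argued, exactly as you indicate, from the c\`adl\`ag property ($J_{j_0/n}\to J_\tau$ while $J_{(j_0-1)/n},J_{(j_0-2)/n}\to J_{\tau-}$), which is how the paper proceeds with $k_n=\lceil n\tau\rceil$.
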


Moreover, Theorem \ref{Divergenz der Teststatistik} establishes a rate of convergence. It is a standard concept for asymptotic tests in asymptotic statistics to formulate the convergence rate via local alternatives. This means for increasing sample size, we consider a sequence of alternatives with decreasing distance to the null hypothesis. Here, we conclude the consistency of the test in case of asymptotically decreasing sequences of (absolute) jump sizes $(|\Delta J_{\tau}|)_n$, as long as $\liminf_{n\to\infty} (|\Delta J_{\tau}|)_n n^{\gamma}>0$, with arbitrary $\gamma<H$. Since $n^{-H}$ is the size of absolute increments of the continuous component $(Y_t)$, a faster rate is not possible. As expected, the rate hinges on $H$ and is faster for larger $H$. In Figure \ref{Fig:paths}, we can thus find much smaller jumps in the smoother case on the right than in the rougher example left-hand side.

\subsection{Localization of jumps\label{sec:5.3}}
One crucial advantage of tests for jumps based on maximum statistics is that they readily allow the localization of jumps. That is, the associated $\operatorname{argmax}$ yields a consistent estimator for the time at which a jump occurred. We establish consistency and a fast rate of convergence of order $n$, under the alternative $H_1$, when one jump occurred.
\begin{prop}\label{proplocalization}
Under $H_1$, if there is one jump at time $\tau\in(0,1)$, and $\Delta J_t=0$, for all $t\ne \tau$, the estimator of the jump time
\[\hat\tau_n=\frac{1}{n}\operatorname{argmax}_{2\le j\le n }\frac{|\Delta_{n,j}^{(2)} X|}{\Big(h_n^{-1}C_p^{-1}\sum_{i=\lfloor jh_n^{-1}\rfloor}^{\lfloor jh_n^{-1}\rfloor+h_n}|\Delta_{n,j}^{(2)} X|^p\Big)^{1/p}}\,,\]
satisfies under the conditions of Theorem \ref{Divergenz der Teststatistik} that
\[\big(\hat\tau_n-\tau\big)=\mathcal{O}_{\P}\big(n^{-1}\big)\,.\]
\end{prop}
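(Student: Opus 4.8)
The plan is to show that, for a fixed single jump at $\tau$, the normalised statistic in the definition of $\hat\tau_n$ is asymptotically maximised at one of the (at most two) indices whose second-order increment actually carries the jump, and that these indices lie within $\mathcal{O}(n^{-1})$ of $n\tau$. First I would localise the jump at the level of second-order increments. Writing $J_t=\Delta J_\tau\,\1\{t\ge\tau\}$ and letting $j^\ast$ be the unique index with $(j^\ast-1)/n<\tau\le j^\ast/n$, a direct computation yields $\Delta_{n,j^\ast}^{(2)}J=\Delta J_\tau$, $\Delta_{n,j^\ast+1}^{(2)}J=-\Delta J_\tau$ and $\Delta_{n,j}^{(2)}J=0$ for all other $j$. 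Thus only the two consecutive indices $j^\ast$ and $j^\ast+1$ carry the jump, and both satisfy $|j/n-\tau|\le 2/n$; it therefore suffices to prove that $\operatorname{argmax}$ belongs to $\{j^\ast,j^\ast+1\}$ with probability tending to one.

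I would then bound the numerator and the denominator separately, uniformly in $j$. For the numerator, the purely continuous part obeys the uniform Gaussian maximal bound $\max_{2\le j\le n}|\Delta_{n,j}^{(2)}Y|=\mathcal{O}_{\P}(n^{-H}\sqrt{\log n})$, which is precisely the extreme-value estimate already underlying Theorem \ref{Gumbelkonvergenz}; at $j^\ast$ and $j^\ast+1$ the numerator is instead of the form $|\pm\Delta J_\tau+\Delta_{n,j}^{(2)}Y|\ge |\Delta J_\tau|-\mathcal{O}_{\P}(n^{-H}\sqrt{\log n})$, hence asymptotically of order $|\Delta J_\tau|>0$. For the denominator I would invoke the jump-robustness built into the construction of $T_n$ and $R_n$: each local block of $h_n$ power variations satisfies, by concentration and Assumption \ref{Annahmen}(i), $h_n^{-1}C_p^{-1}\sum_i|\Delta_{n,i}^{(2)}Y|^p\asymp n^{-pH}$ with a two-sided bound that is uniform over blocks, while the at most two jump-affected summands add only $\mathcal{O}(h_n^{-1})=\mathcal{O}(n^{-\beta})$ to this normalised $p$-th power sum. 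Since $\beta>pH$, this extra mass is $o(n^{-pH})$, so after taking the $1/p$-th power every denominator — including those of the blocks meeting $\{j^\ast,j^\ast+1\}$ — stays between constant multiples of $n^{-H}$ with probability tending to one.

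Combining the two estimates, the statistic at the jump indices is at least $(|\Delta J_\tau|-\mathcal{O}_{\P}(n^{-H}\sqrt{\log n}))/\mathcal{O}_{\P}(n^{-H})$, which is of order $n^{H}|\Delta J_\tau|$ and diverges, whereas at every non-jump index it equals $\mathcal{O}_{\P}(n^{-H}\sqrt{\log n})/\mathcal{O}_{\P}(n^{-H})=\mathcal{O}_{\P}(\sqrt{\log n})$ and stays bounded. Consequently, on an event of probability tending to one the $\operatorname{argmax}$ lies in $\{j^\ast,j^\ast+1\}$, so $\hat\tau_n\in\{j^\ast/n,(j^\ast+1)/n\}$ and $|\hat\tau_n-\tau|\le 2/n$; this gives the claimed rate $(\hat\tau_n-\tau)=\mathcal{O}_{\P}(n^{-1})$.

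The main obstacle is the uniform two-sided control of the denominators, and in particular the lower bound for the block containing the jump. Were the jump able to inflate that block's power variation to order $|\Delta J_\tau|$, the ratio at $j^\ast$ would cease to diverge and the $\operatorname{argmax}$ could be displaced away from $\tau$. The requirement $pH<\beta$ — attainable by choosing $p$ small enough — is exactly what forces the normalised jump contribution $n^{-\beta/p}$ to be of strictly smaller order than the continuous scaling $n^{-H}$, ruling out this inflation; the remainder of the argument is the routine comparison of a diverging quantity against an $\mathcal{O}_{\P}(\sqrt{\log n})$ background.
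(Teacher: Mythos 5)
Your proposal is correct and follows essentially the same route as the paper: identify that only the two indices $j^*$ and $j^*+1$ carry the jump, use the uniform two-sided control of the block-wise normalizers (the paper's Lemma \ref{GK} together with the jump-robustness of the power variation under $pH<\beta$) to show the standardized ratio diverges at those indices while remaining bounded elsewhere, and conclude that the $\operatorname{argmax}$ lies in $\{j^*,j^*+1\}$ with probability tending to one. The only cosmetic difference is that the paper controls the bad event via a Gaussian tail bound plus union bound and then passes through $\E[|\hat\tau_n-\tau|]$ and Markov's inequality, whereas you argue directly with the extreme-value bound $\max_j|\Delta_{n,j}^{(2)}Y|=\mathcal{O}_{\P}(n^{-H}\sqrt{\log n})$; both yield the stated $\mathcal{O}_{\P}(n^{-1})$ rate.
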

Clearly, based on discrete observation times with distance $n^{-1}$, it is impossible to locate jumps more accurately than in an interval of length $n^{-1}$ around $\tau$. As the proof of the proposition shows, this works even for decreasing sequences of absolute jump sizes $(|\Delta J_{\tau}|)_n$, as long as $\liminf_{n\to\infty} (|\Delta J_{\tau}|)_n n^{\gamma}>0$, for some $\gamma<H$. Thus, we can estimate jump times with the best possible rate of convergence. Confidence for $\operatorname{argmax}$-estimators is, however, a very involved problem which is beyond the scope of this manuscript.\\
A sequential application of the test based on the maximum and the $\operatorname{argmax}$-estimator, where we discard in each step the previous maximal absolute (second-order) increment, yields a sequential top-down algorithm to estimate the number and times of jumps under $H_1$, if we assume some finite number of jumps. Furthermore, the maximal absolute increments and their signs yield estimates of jump sizes as well.

\section{Simulations\label{sec:6}}
\begin{figure}[t]
\begin{center}
\includegraphics[width=12cm]{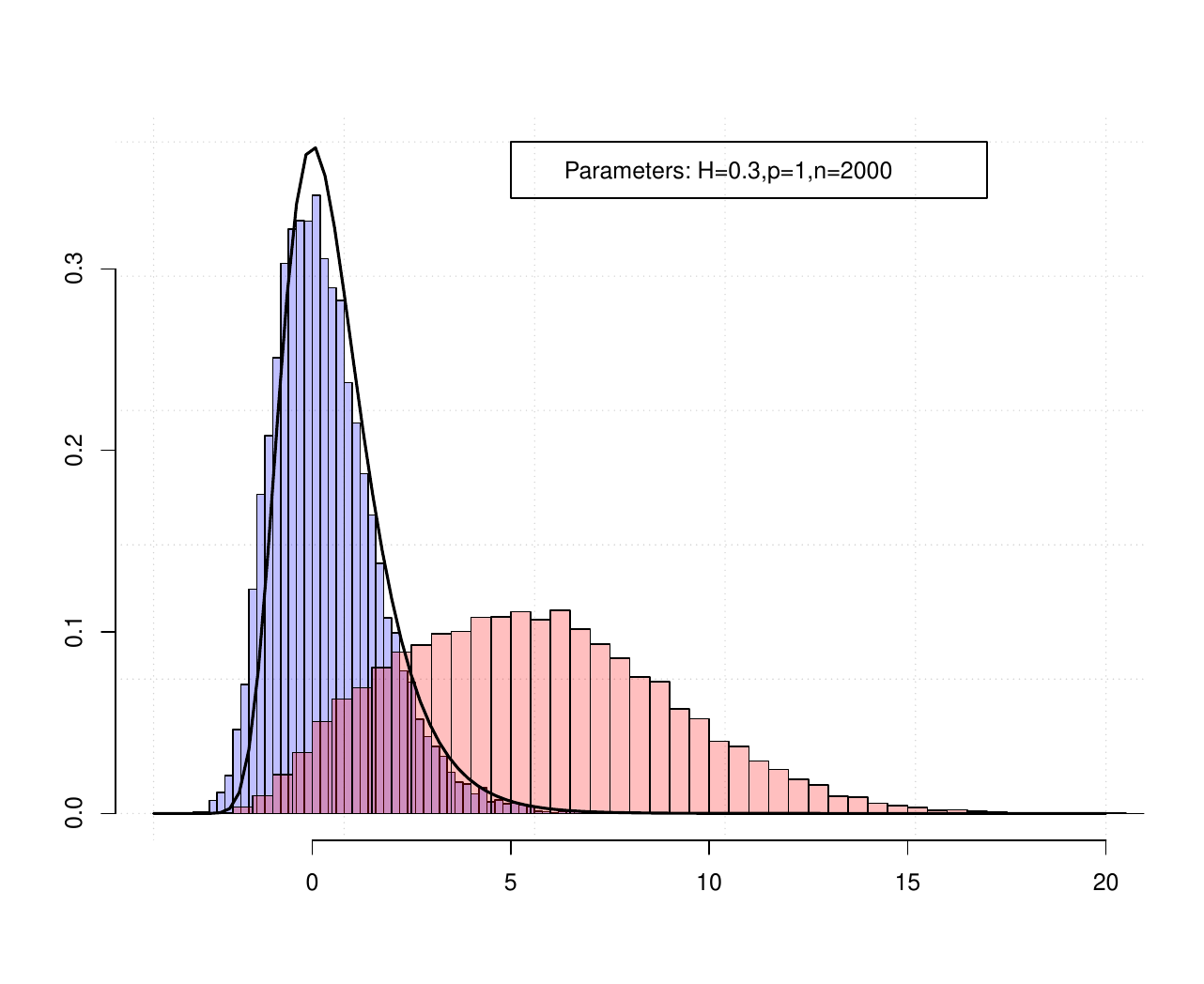}
\caption{Empirical distributions of test statistic under the null hypothesis, compared to the standard Gumbel limit distribution drawn with the black line, and under the alternative hypothesis with fix jump size $\Delta J_{\tau}=0{.}8$. \label{Fig:histo}}
\end{center}
\end{figure}

In this section, we investigate the finite-sample properties of our test and the estimation of the Hurst exponent in the presence of jumps in a Monte Carlo simulation study. We implement the model with the volatility function
\begin{eqnarray*}
\sigma_t=1-0{.}2\cdot \sin\left(\frac{3}{4}\pi t\right),\quad t\in[0,1],
\end{eqnarray*}
which is a Lipschitz continuous function, such that we have $\alpha=1$ in Assumption \ref{Annahmen}. Therefore, we choose the optimal window size $h_n=\lfloor n^{2/3}\rfloor$, with $\beta=2/3$, for spot volatility estimation. For the simulation of paths of the fractional Brownian motion, we are using the Cholesky method as described in Section 3.4 of \cite{coeurjolly2000simulation}. We will only illustrate the finite-sample properties of the test $\operatorname{(T1)}$ based on $T_n$ from \eqref{Tn}. The results for the test $\operatorname{(T2)}$ based on $R_n$ from \eqref{Rn} are completely analogous and hence omitted.

Figure \ref{Fig:histo} visualizes the empirical distribution of the test statistic for the test $\operatorname{(T1)}$ in the left histogram under the null hypothesis. The histograms are standardised to densities and we draw a comparison to the density of the theoretical, standard Gumbel limit distribution which is drawn with a solid black line. Both histograms in Figure \ref{Fig:histo} are based on 2000 Monte Carlo runs of our model with $H=0{.}3$, and for sample size $n=2000$. The empirical quantiles match reasonably well with those of the Gumbel density. Although the finite-sample fit is not perfect, in particular the large quantiles closely track their theoretical asymptotic counterparts. We conclude that we can use the test as constructed without finite-sample adjustments as, for instance, a bootstrapped version. The right histogram in Figure \ref{Fig:histo} shows the empirical distribution of the test statistic under the alternative hypothesis with a fixed jump size of $0{.}8$, at a generated jump time $\tau$, which is uniformly distributed on $\lbrace 1/n,\ldots,(n-1)/n \rbrace$. In this setting, the empirical distributions under the null and alternative hypotheses separate, but the power of the test does not attain a value close to 1. We can see this in Figure \ref{Fig:histo}, since the two histograms overlap.

\begin{figure}[t]
\begin{center}
\includegraphics[width=12cm]{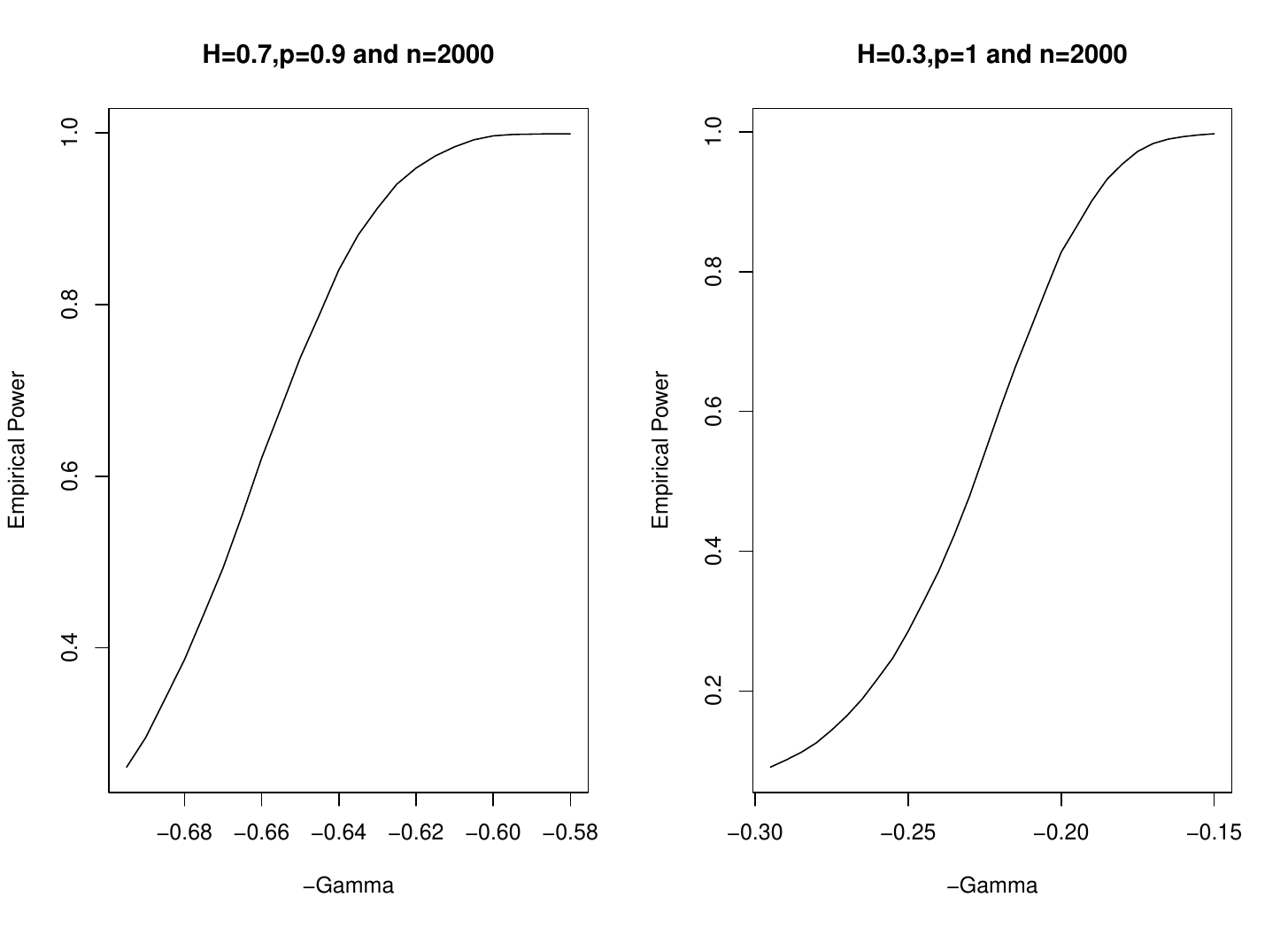}
\caption{Empirical power against level $\gamma$ of jump sizes $\sqrt{2\log(n)}n^{-\gamma}$. \label{Fig:power}}
\end{center}
\end{figure}

We consider one jump at a uniformly distributed jump time to demonstrate the power of our test in a finite sample. The empirical power of the test for positive jumps with level 5\% is illustrated for jump sizes $\sqrt{2\log(n)}n^{-\gamma}$, with different values of $0<\gamma<H$, in Figure \ref{Fig:power} with a sample size $n=2000$. One of our main theoretical results shows that for jump sizes with $\gamma<H$ our test $\operatorname{(T1)}$ should start to work with increasing power as $\gamma$ decreases. Figure \ref{Fig:power} confirms the expected behaviour, for $H=0{.}3$ right-hand side and $H=0{.}7$ left-hand side, and the power increases rapidly with decreasing values of $\gamma$. We use statistic \eqref{Tn} with $p=1$ for $H=0{.}3$, and with $p=0{.}9$ for $H=0{.}7$, such that the condition $pH<\beta=2/3$ from Theorem \ref{Divergenz der Teststatistik} is satisfied in both cases. Using smaller values of $p$ yields similar empirical results. Comparing the associated jump sizes for certain powers in the two plots of Figure \ref{Fig:power} underlines the fact that larger $H$, resulting in smoother paths of $(Y_t)$, allows to detect much smaller jumps. The setting of Figure \ref{Fig:histo} corresponds to $\gamma\approx 0{.}21$, and the plot right-hand side in Figure \ref{Fig:power} shows that the power is slightly below 80\% in this case. 

Tables \ref{Empirical power H=0.7} and \ref{Empirical power H=0.3} contain the empirical powers for different jump sizes and different sample sizes. In each scenario, the jump size is kept fix and the time of the jump is uniformly distributed. In each column, for fixed jump sizes, the empirical power increases from top down when the sample size is increasing. In each row, for fixed sample sizes, the empirical power increases from left to right when the jump size is increasing.

\begin{table}[ht]
    \centering
    \begin{tabular}{|l|c|c|c|c|c|c|c|c|c|c|}
  \hline
  \diagbox{$n$}{$\Delta J_{\tau}$}
                   &$0.013 $ & $ 0.015 $ & $ 0.017 $ & $ 0.019$ & $0.020 $ & $0.022 $ & $ 0.024$ & $ 0.026 $ & $0.028 $ &$0.029 $\\
  \hline
  $500$   &0.046  &0.046 & 0.046 & 0.046 &  0.047 & 0.049 & 0.051 & 0.053 & 0.058 & 0.063\\ \hline
  $1000$    &0.049  &0.052 & 0.054 & 0.059 & 0.065 & 0.091 & 0.101 & 0.127 & 0.163  &0.200\\ \hline
  $1500$    &0.049& 0.060& 0.081& 0.107& 0.143& 0.197& 0.255& 0.341& 0.434& 0.519\\ \hline
  $2000$ &0.073& 0.107& 0.148& 0.210&  0.287& 0.394& 0.504& 0.616& 0.730& 0.825\\ \hline
  $2500$    &0.102& 0.166& 0.251&  0.362& 0.493&  0.625& 0.751& 0.846& 0.921& 0.961\\ \hline
\hline

\end{tabular}
\caption{Empirical power for Hurst exponent $H=0{.}7$ based on $2000$ Monte Carlo simulations. $\Delta J_{\tau}$ denotes the jump size, starting around $2500^{-0.67}\sqrt{2\log(2500)}\approx 0.013$. We use \eqref{Tn} with $p=0{.}9$. \label{Empirical power H=0.7}}
\vspace*{.25cm}

  \begin{tabular}{|l|c|c|c|c|c|c|c|c|c|c|}
  \hline
  \diagbox{$n$}{$\Delta J_{\tau}$}
                   &$0.360 $ & $0.415  $ & $ 0.470 $ & $ 0.525 $ & $ 0.580 $ & $ 0.635 $ & $ 0.690$ & $ 0.745$  & $0.800$ &$  0.855 $\\
  \hline
  $500$      &0.047& 0.053& 0.059& 0.080& 0.098& 0.118& 0.152& 0.188& 0.237& 0.289\\\hline
  $1000$       & 0.057& 0.069& 0.083& 0.109& 0.156& 0.199& 0.279& 0.346& 0.449& 0.551\\\hline
   $1500$       & 0.059& 0.075& 0.102& 0.147& 0.205& 0.272& 0.373& 0.503& 0.613& 0.708\\ \hline
    $2000$       & 0.070& 0.096& 0.140& 0.209& 0.296& 0.400& 0.515& 0.628& 0.740& 0.826\\\hline
     $2500$       & 0.072& 0.105& 0.161& 0.244& 0.353& 0.477& 0.617& 0.733& 0.828& 0.910\\\hline
  \hline
\end{tabular}
    \caption{Empirical power for Hurst exponent $H=0{.}3$ based on $2000$ Monte Carlo simulations. $\Delta J_{\tau}$ denotes the jump size, starting around $2500^{-0.25}\sqrt{2\log(2500)}\approx 0{.}36$. We use \eqref{Tn} with $p=0{.}9$. \label{Empirical power H=0.3}}
\end{table}
To evaluate our test, we further compare the rejection rates under the null hypothesis for the test $\operatorname{(T1)}$ used with the quantile $q_{0.95}=-\log(-\log(0.95))$ of the theoretical Gumbel limit distribution. Table \ref{Empirical level} contains the empirical levels, that is, the size of the test, for different Hurst exponents and different sample sizes. These values indicate that our test attains the corresponding level in a finite sample. In fact, the empirical rejection rates are even slightly smaller than the 5\% level.
\begin{table}[ht]
    \centering
    \begin{tabular}{|l|c|c|c|c|c|}
  \hline
  \diagbox{$H$}{$n$}
                 & $500$ &$1000$ & $1500$ & $2000$ & $2500$ \\
  \hline
  $0.3$       &    0.0306    & 0.0418         &  0.0380          & 0.0392    &  0.0372          \\ \hline
  $0.7$       &    0.0430    & 0.0428         &  0.0408          & 0.0386    &  0.0348           \\ \hline
  \hline
\end{tabular}
    \caption{Empirical size of the test calculated from $5000$ Monte Carlo simulations. For $H=0{.3}$ we use $p=1$ and and for $H=0{.}7$ we use $p=0{.}9$ in \eqref{Tn}.}
    \label{Empirical level}
\end{table}

\begin{figure}[t]
\begin{center}
\includegraphics[width=14.5cm]{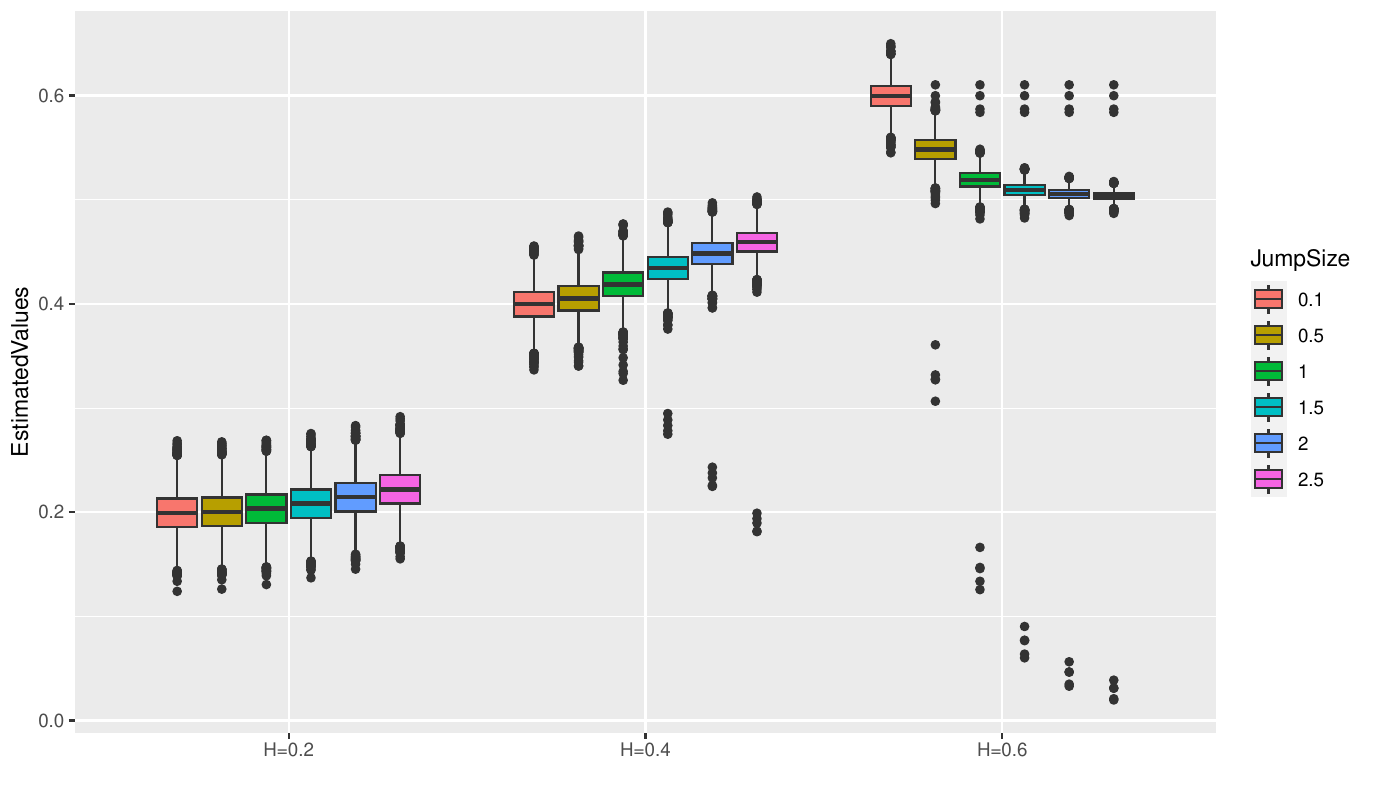}
\caption{Empirical robustness in estimating the Hurst exponent. \label{Fig:robust}}
\end{center}
\end{figure}

We study the finite-sample distribution of the estimator \eqref{Hest} to analyse its robustness with respect to jumps. Figure \ref{Fig:robust} shows boxplots of the estimates for three different values of the Hurst exponent, $H=0{.}2$, $H=0{.}4$, and $H=0{.}6$. For each value of $H$, we add one jump to the simulated paths of $(X_t)$ at a uniformly distributed jump time and we consider an increasing sequence of jump sizes starting with a very small one of size $\Delta J_{\tau}=0{.}1$, up to a huge jump of size $\Delta J_{\tau}=2{.}5$. Each boxplot is based on 2000 Monte Carlo runs and the sample size is in all scenarios $n=2000$. We directly apply estimator \eqref{Hest}, without filtering jumps, to demonstrate the effects of ignoring jumps on the estimation of the Hurst exponent. This seems to be important to us in view of the existing literature and empirical evidence for rough volatility.

For $H=0{.}2$, by the theoretical result of Proposition \ref{proprobust2} the estimator is consistent, also in presence of jumps, and even satisfies the central limit theorem at optimal rate $\sqrt{n}$. The robustness is confirmed in the finite-sample simulation left in Figure \ref{Fig:robust}, at least for moderate jump sizes. Huge jumps can, however, result in a positive finite-sample bias. One implication of this finding is that filtering jumps based on our methods is beneficial for estimating the Hurst exponent and can increase the finite-sample precision, also for small values of the true Hurst exponent. Another important insight is that a finite-sample bias due to a jump is positive and thus jumps should not manipulate estimates of $H$ in the way that smaller estimates are obtained. A positive bias is expected from the theory, since the jumps inserted in the estimator \eqref{Hest} yield the value 1/2. Therefore, the empirical means should lie between 1/2 and the unbiased estimates based on the observations of the continuous component $(Y_t)$. This is good news for the literature pointing at empirical evidence for rough volatility. Ignoring jumps should not result in a negative bias of their estimated Hurst exponents, at least when using the estimator \eqref{Hest} or similar ones.

For $H=0{.}4$, by Proposition \ref{proprobust2} consistency of the estimator still holds. The convergence rate is however slower in these scenarios and the central limit theorem which would hold for observations of $(Y_t)$ without jumps does not remain valid in presence of jumps. The few outliers we see for $H=0{.}4$ and $H=0{.}6$ in Figure \ref{Fig:robust} are due to jumps with a jump time that falls in the last observed increment. In this case, the jumps inserted in the estimator \eqref{Hest} yield the value 0 instead of 1/2, what explains the smaller estimates in these cases. Compared to the smaller Hurst exponent, the positive finite-sample bias with increasing jump size is much more pronounced here. Filtering jumps before the estimation of $H$ is thus even more important. The localization based on our methods from Section \ref{sec:5.3} yields the correct jump times, referring to the correct discrete index of the second-order increments containing the jumps, in basically all cases in that the test rejects correctly. Since the power of our test is high for the larger jump sizes, pre-filtering jumps with our methods would correct the finite-sample bias in the estimates of Figure \ref{Fig:robust}.

Finally, on the right in Figure \ref{Fig:robust} we apply estimator \eqref{Hest} to paths of $(X_t)$ with jumps and $H=0{.}6$. Proposition \ref{proprobust2} shows that the estimator is here inconsistent and converges to 1/2 in probability instead of $0{.}6$. We see this for larger jumps and that the variance of the estimation becomes small when the jumps dominate the continuous component in the finite-sample, empirical distribution. Only for very small jumps, the estimation is still robust with respect to jumps for this fixed sample size.

In conclusion, Figure \ref{Fig:robust} underlines our finding that ignoring jumps does not manipulate the empirical evidence for rough volatility obtained so far. At the same time, it motivates that filtering jumps is nevertheless important, also when estimation of $H$ is the main target. Our proposed test with the localization procedure can be used to filter out the jumps. After discarding the largest absolute second-order increments which our test ascribes to jumps only the remaining second-order increments should be used for estimators as \eqref{Hest}.

\section{Data application\label{sec:6neu}} 
\begin{figure}[t]
\begin{center}
\includegraphics[width=7.25cm]{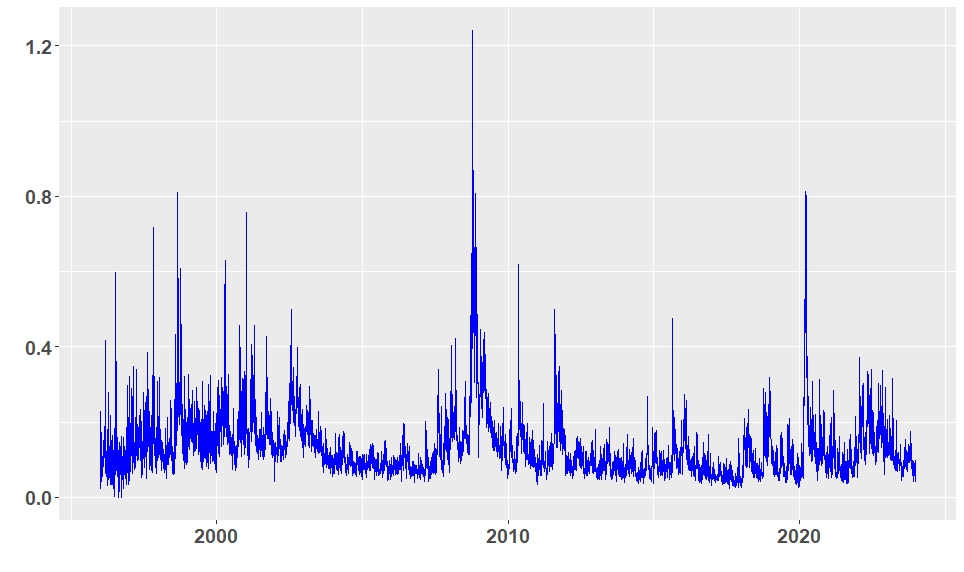}\includegraphics[width=7.25cm]{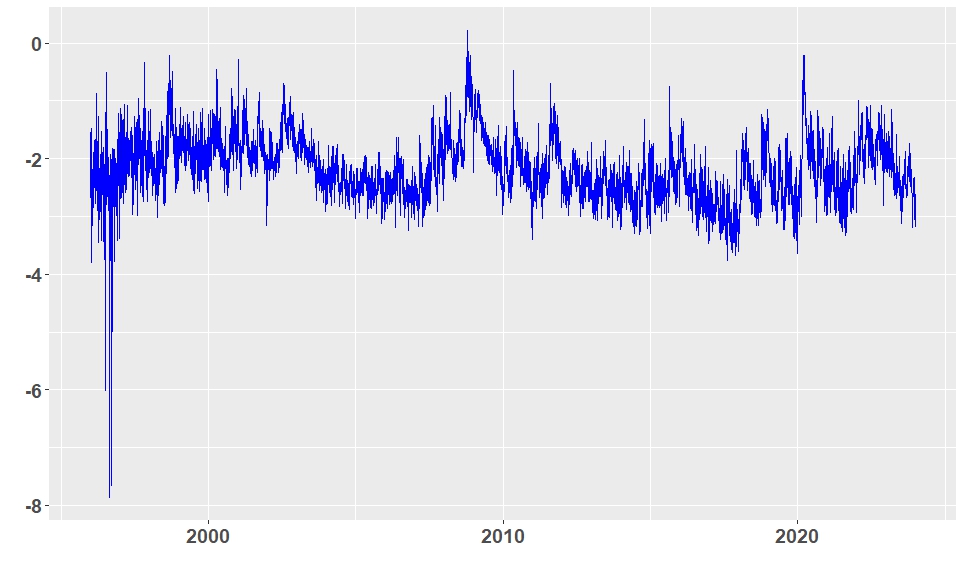}
\caption{Example time series of daily volatilities (left) and logarithmic daily volatilities (right).\label{Fig:dataexample}}
\end{center}
\end{figure}

For a relevant data example, we consider a time series of daily volatility estimates inferred from intra-day high-frequency data of the S\&P 500 market ETF (SPY). The data is constructed from the Risk Lab on Dacheng Xiu's website.\,\footnote{\href{https://dachxiu.chicagobooth.edu/\#risklab}{https://dachxiu.chicagobooth.edu/\#risklab}} Precisely, we use the daily quasi maximum likelihood estimates (QMLE) based on trade prices from 1996 to 2023. The time series includes in total 7021 observations. It covers as a subsample the time span from October 1, 2017, to September 30, 2022. This is considered in \cite{forecast} for an empirical study including estimation and forecasting based on a parametric model with a fractional Brownian motion with constant volatility. In line with the literature, we model the observed log-volatilities by a fractional process. Our model is more general than the one of \cite{forecast} in two aspects, allowing for time-varying volatility and taking possible jumps into account. Time-varying and constant volatility refer here to the underlying volatility of the observed log-volatility time series. Figure \ref{Fig:dataexample} illustrates the time series with the volatility estimates left-hand side and the logarithmic values right-hand side. In particular during the subprime mortgage crisis in the year 2008, the plots show a high volatility cluster. In this period, we might expect to find some jumps. Moreover, a few very small volatilities during 1996 result in low peaks of the log-volatilities with negative values of large absolute sizes. The turbulent dynamics of the overall time series suggest that a parametric model with constant volatility (of volatility) might be too restrictive. Our model allows to estimate a time-dependent volatility (of volatility), e.g.\ higher in periods of financial crises, to avoid spurious jump detections. It is less clear if jumps and time-varying volatility (of volatility) are important in the five year subsample from \cite{forecast}. We observe in Figure \ref{Fig:dataexample} that the beginning of the Corona pandemic triggered another cluster of higher volatility in spring 2020.
 
Our implementation of the methods used for this study is available,\,\footnote{\href{https://github.com/Michael-Sonntag/Gumbel-test}{at github.com/Michael-Sonntag/Gumbel-test}} such that all results are reproducible. An application of our test  $\operatorname{(T2)}$ to the time series yields rejection of the null hypothesis that there is no jump at any reasonable level, e.g.\ $\alpha=5\%$, or $\alpha=1\%$. Here and in the sequel, we use the statistic \eqref{Rn} with the power $p=0{.}85$ and $h_n=\lfloor n^{2/3}\rfloor$. The outcome of the test is, however, robust with respect to different choices. The null hypothesis is not rejected when we apply the test $\operatorname{(T2)}$ only to the subsample considered in \cite{forecast}. We conclude that in general jumps should be taken into account and filtered out before fitting a continuous, rough fractional volatility model to this data. At the same time, the non-rejection for the subsample shows that our method is not over-sensitive and confirms that the results by \cite{forecast} remain valid when taking possible jumps into account.

In order to locate all jumps in the time series, we perform a sequential application of the test $\operatorname{(T2)}$. That is, after rejection, we estimate the time of the largest jump with the $\operatorname{argmax}$-estimator from Proposition \ref{proplocalization}. In the next step, we repeat the test discarding the largest second-order increment that was already detected to be due to a jump. If the test rejects once more, we locate the next jump based on the $\operatorname{argmax}$-estimator. We repeat the procedure until the test does not reject the null hypothesis any more. Based on this sequential application of the test at level 5\% (95\% confidence), we find in total 14 second-order increments which are ascribed to jumps (7 at level 1\% and 15 at level 10\%). From the 14 detected second-order increments with jumps, as expected, three are in late September and October 2008. Even 8 of them are related to the small volatilities in 1996. However, note that the number of detected jumps is in fact not exactly the same as the number of affected second-order increments, but should be smaller than 14. The reason is that a jump enters two neighbouring second-order increments such that typically both are affected by the same jump. We find some stylized facts which further complicate the empirical analysis. We detect some log-volatility outliers, associated with the small volatilities in 1996. In contrast to the stylized picture of a directional jump, an outlier is a more isolated large absolute value neighboured by much smaller ones before and afterwards. Hence, an outlier is associated rather with two successive jumps in opposite directions. We emphasize this stylized fact as a caveat, since under high-frequency asymptotics directional jumps are expected whose inter-arrival times include several continuous movements rather than outliers. In view of the daily time steps, however, this stylized fact might not be too surprising. Moreover, it does not cause problems when using our methods for jump filtering. We spot one outlier on September 20, 1996, which affects three neighboured second-order increments, while being associated with two jumps. On August 19 and 20, 1996, there are two outliers next to each other affecting four second-order increments, including three of our 14 detected ones and one which is slightly below the critical value of the test. Taking these specific events into account, we count in total 11 estimated jumps at level 5\%. The picture of a directional jump which results in two detected neighboured second-order increments occurred in fact only once. In other cases, neighbours were below the critical values and hence not included. In Figure \ref{Fig:dataexample} we plot the standardized second-order increments over which the maximum is taken in statistic \eqref{Rn}. The second-order increments with detected jumps by our sequential application of the test are highlighted in blue. We grey out second-order increments which are only affected as neighbours of highlighted ones and associated with the same jumps. This includes the largest second-order increment which adds the sizes of two neighboured jumps around the outlier. This detail does not influence the outcome of the test.

We find that it is important to consider time-varying volatility (of volatility) for the jump detection. Assuming it constant instead and standardizing thus with one global estimate would result in a larger number of estimated jumps, i.e., 35 second-order increments at level 5\% (24 at level 1\% and 41 at level 10\%). For the subsample considered in \cite{forecast}, the test does not reject the null hypothesis, also not when we assume constant volatility. 

Let us emphasize an important detail of the sequential application of the test. In fact, the Gumbel test by \cite{leemykland} has been used in the way of a thresholding method. That is, a threshold is computed based on the quantile of the Gumbel limit law and the normalizing sequences and each increment exceeding this threshold is detected as a jump. This simple ad-hoc procedure can be quite similar to a sequential application of the Gumbel test, but it is actually not equivalent. For this difference it is not important if using first- or second-order increments. The reason is that after each test, we compute the statistic \eqref{Rn} and the critical value for the test based on a reduced sample of observations. We discard the largest second-order increments in particular as well for the standardization. We illustrate the difference in Figure \ref{Fig:dataexample2} through the horizontal blue line which gives the threshold, or critical value, for the first performance of the test. As the plot reveals there are a few blue points below this line. Based on the thresholding variant these points would have been classified differently. Naturally, all these points are rather close to the threshold line. While the sequential application requires a more elaborate implementation, it has a more solid theoretical justification than the threshold variant. 

Finally, we apply estimator \eqref{Hest} to the time series to estimate the Hurst exponent. This estimator is usually applied within a parametric framework, but we can expect it to be robust with respect to time-varying volatility (of volatility) under sufficient regularity. 
We obtain $\hat H\approx 0{.}211$. Filtering out detected jumps with our above method yields a significantly higher estimate, $\hat H\approx 0{.}247$. Nevertheless, our results confirm the empirical evidence for rough volatility. Recalling the positive finite-sample bias due to jumps in our simulations and illustrated in Figure \ref{Fig:robust}, it seems to be surprising that $\hat H$ is increasing after removing jumps in the data example instead of decreasing. This observation can be explained by the stylized fact of the volatility outliers and their huge influence in this example. While a positive bias is expected from single jumps, since a jump inserted in the estimator \eqref{Hest} yields the value 1/2, an outlier, i.e., two neighbouring jumps of similar size in opposite directions, inserted in \eqref{Hest} yields the value 0. Therefore, volatility outliers result rather in a negative finite-sample bias. Instead, we obtain a smaller estimate of approx.\ 0{.}198 for $H$, when we discard the 50 largest increments which exceed a threshold of 1. Considering only the subsample from \cite{forecast}, we obtain an estimate of approx.\ 0{.}265 of the Hurst exponent what is in line with the ML-estimates in their Figure 6 (b). Since no jumps have been detected in this period, the estimate remains unchanged.

Overall, the empirical application underlines that it is important to take possible jumps into account for typical applications of rough fractional stochastic volatility. Moreover, we learn that outliers occur in these time series which can also be filtered out with our methods. 

\begin{figure}[t]
\begin{center}
\includegraphics[width=14.5cm]{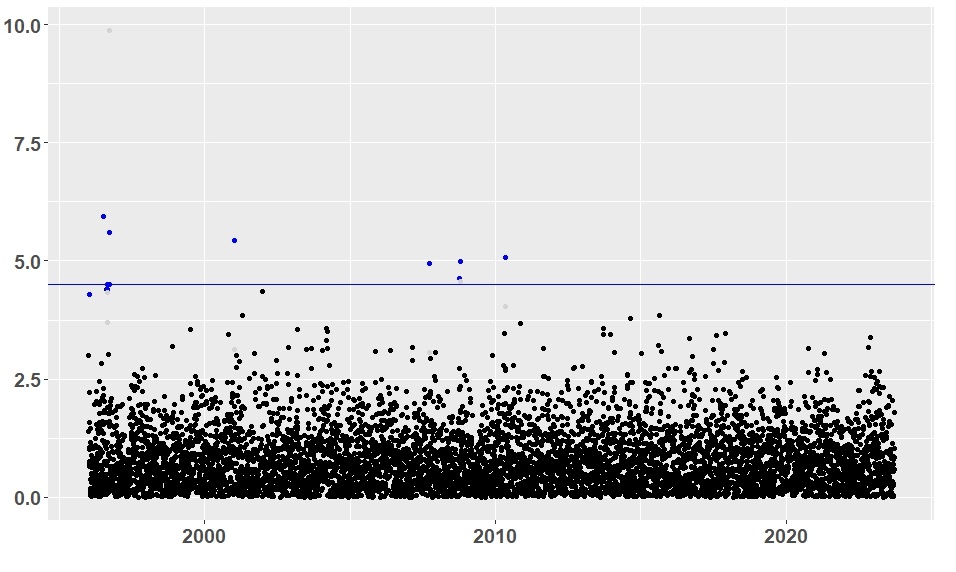}
\caption{Standardized second-order increments of volatility time series with detected jumps (blue) and threshold for the maximum.\label{Fig:dataexample2}}
\end{center}
\end{figure}

\section{Conclusion\label{sec:7}}
We solve the problem of testing for jumps based on high-frequency observations of a process with integral fractional part and present a Gumbel test with the desired asymptotic properties. We provide a localization result about the optimal estimation of jump times. This is new also for the semimartingale case, as such a result was not contained in the literature. It can be used to filter out jumps when the goal is inference on $H$ and $(\sigma_t^2)$. We point at good news for the rough volatility literature, that jumps should not manipulate their empirical finding that volatility is rough. Nevertheless, our empirical analysis reveals that jumps and outliers are practically relevant. It might be of interest for future research in economics to investigate how our findings affect forecasting results. Since the power of our approach increases with $H$, it works in particular well in the case when non-adjusted statistics for the continuous model without jumps become inconsistent. However, asymptotic results for using the suggested sequential top-down algorithm are readily feasible from our theory only when restricting to jumps of finite activity. For future research, it hence will be of interest to strengthen and generalize our results on the robustness of inference on $H$ and $(\sigma_t)$ with respect to jumps. For this purpose, it might be worth exploring approaches as multipower variations or a global jump filter, considered in \cite{multi} and \cite{inatsugu2021global} for Brownian semistationary processes and semimartingales, respectively. To model a volatility which is pre-estimated from observed prices, it would be of interest to further extend the methods to an observation model with additional noise. The Gumbel test for jump diffusion models has been extended to a noisy observation model in \cite{leemykland2}.

\section{Proofs\label{sec:8}}
\subsection{Groundwork from extreme value theory}\label{Appendix extreme value theory}
We require a result that the well-known Gumbel convergence for the maximum of i.i.d.\  standard normally distributed random variables generalizes to stationary sequences of weakly dependent normally distributed random variables.
\begin{lem}\label{Berman}
Let $(Y_n)_{n\in\mathbb{N}}$ be a standardised stationary sequence of normally distributed random variables, such that $\E[Y_n]=0$ and $\var(Y_n)=1$, with covariances $(\gamma(n))_{n\in\mathbb{N}}$ satisfying the condition $\lim_{n\to\infty}\log(n)\gamma(n)=0$. Then it holds that
\begin{eqnarray*}
\lim_{n\to\infty}\mathbb{P}\left(a_n\left( \max_{1\leq k\leq n}Y_k-b_n\right)\leq x \right)=\exp\left(-\exp(-x) \right),\quad x\in\mathbb{R},
\end{eqnarray*}
where $a_n$ and $b_n$ are given by
\begin{eqnarray*}
a_n=\sqrt{2\log(n)},\quad b_n=a_n-\frac{\log(\log( n))+\log(4\pi)}{2a_n}.
\end{eqnarray*}
 Furthermore if we define $c_n$ and $d_n$ by
\begin{eqnarray*}
c_n=\sqrt{2\log(2n)},\quad d_n=c_n-\frac{\log(\log( 2n))+\log(4\pi)}{2c_n},
\end{eqnarray*}
we obtain that
\begin{eqnarray*}
\lim_{n\to\infty}\mathbb{P}\left(c_n\left( \max_{1\leq k\leq n}\vert Y_k\vert-d_n\right)\leq x \right)=\exp\left(-\exp(-x) \right),\quad x\in\mathbb{R}.
\end{eqnarray*}
\end{lem}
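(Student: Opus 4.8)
The plan is to transfer the classical Gumbel limit for i.i.d.\ standard normals to the weakly dependent sequence $(Y_n)$ by means of the normal comparison (Berman) inequality, so that the whole burden lies in showing that the dependence contributes a negligible error. As a baseline, recall that for i.i.d.\ $(Z_k)\sim\mathcal{N}(0,1)$ and the level $u_n(x):=x/a_n+b_n$, the tail expansion $1-\Phi(u)\sim\phi(u)/u$ together with the definition of $a_n,b_n$ yields $n\big(1-\Phi(u_n(x))\big)\to e^{-x}$, whence $\P\big(\max_{k\le n}Z_k\le u_n(x)\big)=\Phi(u_n(x))^n\to\exp(-e^{-x})$. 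For the first assertion I would invoke Berman's inequality comparing $(Y_k)$, with covariances $\gamma(\cdot)$, to the i.i.d.\ reference (all off-diagonal covariances zero): with $u_n=u_n(x)$,
\begin{align*}
\Big|\,\P\big(\max_{k\le n}Y_k\le u_n\big)-\Phi(u_n)^n\,\Big|\le C\sum_{k=1}^{n-1}(n-k)\,|\gamma(k)|\,\big(1-\gamma(k)^2\big)^{-1/2}\exp\!\left(-\frac{u_n^2}{1+|\gamma(k)|}\right).
\end{align*}
It then suffices to prove that the right-hand side tends to $0$, since the subtracted term $\Phi(u_n)^n$ already converges to the Gumbel limit.

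\textbf{Controlling the comparison error.} This is the technical heart of the argument. Writing $\delta:=\sup_{k\ge1}|\gamma(k)|$, which is finite and $<1$ because $\gamma(k)\to0$, and using $u_n^2=2\log n-\log\log n-\log(4\pi)+2x+o(1)$, I would split the sum at a sub-polynomial lag $k_n\propto n^{c}$ with $0<c<(1-\delta)/(1+\delta)$. For the near-diagonal lags $k\le k_n$ one bounds the exponent crudely by $\exp\!\big(-u_n^2/(1+\delta)\big)$, which is of order $n^{-2/(1+\delta)}(\log n)^{1/(1+\delta)}$; since $2/(1+\delta)>1$, the factor $n$ coming from $(n-k)$ is overcome and this part is $\mathcal{O}\big(k_n\,n^{1-2/(1+\delta)}(\log n)^{1/(1+\delta)}\big)\to0$. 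For the far lags $k>k_n$ one instead uses $1/(1+r)\ge1-r$ to get $\exp(-u_n^2/(1+|\gamma(k)|))\le e^{-u_n^2}n^{2|\gamma(k)|}$, together with $n^2 e^{-u_n^2}\asymp\log n$, so that this part is $\mathcal{O}\big(\delta_{k_n}\,(\log n)\,n^{2\delta_{k_n}}\big)$ with $\delta_{k_n}:=\sup_{k>k_n}|\gamma(k)|$. The hypothesis $\log(n)\gamma(n)\to0$ enters exactly here: since $k>k_n\propto n^c$ forces $\log n\le c^{-1}\log k$, we obtain $\delta_{k_n}\log n\le c^{-1}\sup_{k>k_n}\big(|\gamma(k)|\log k\big)\to0$, hence $n^{2\delta_{k_n}}\to1$ and the far part also vanishes. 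I expect this splitting — balancing the crude near-diagonal bound (which needs only $\delta<1$) against the far-lag bound (which needs precisely Berman's $\log n$-condition) — to be the main obstacle of the proof.

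\textbf{The maximum of absolute values.} For the second assertion I would run the same scheme on the two-sided events $\{|Y_k|\le u_n\}=\{Y_k\in[-u_n,u_n]\}$, using the interval version of Berman's inequality. Both endpoints have modulus $u_n$, so the comparison error is governed by the same weights $\exp(-u_n^2/(1+|\gamma(k)|))$ as before, up to a bounded constant, and is negligible by the identical estimate. The i.i.d.\ reference now gives $\P\big(\max_{k\le n}|Z_k|\le v_n\big)=\big(2\Phi(v_n)-1\big)^n=\big(1-2(1-\Phi(v_n))\big)^n$ for $v_n:=x/c_n+d_n$, and the choice of $c_n,d_n$ — with $n$ replaced by $2n$ — is exactly calibrated so that $2n\big(1-\Phi(v_n)\big)\to e^{-x}$; hence this probability tends to $\exp(-e^{-x})$, which completes the proof.
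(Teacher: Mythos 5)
Your proposal is correct, but it takes a different route from the paper in the sense that the paper does not prove this lemma at all: it simply cites Theorem 3.1 of Berman (1964) for the one-sided statement and disposes of the absolute-value statement with a one-line appeal to the symmetry of the normal distribution. What you have written out is, in substance, the standard proof of the cited theorem (as in Berman's original paper and in Leadbetter--Lindgren--Rootz\'en): the normal comparison inequality against the i.i.d.\ reference, the calibration $n(1-\Phi(u_n(x)))\to e^{-x}$, and the split of the comparison sum at a lag $k_n\propto n^c$ with $0<c<(1-\delta)/(1+\delta)$, where the near-diagonal part is killed by $2/(1+\delta)>1$ and the far part by the hypothesis $\log(n)\gamma(n)\to 0$ via $\delta_{k_n}\log n\to 0$ and $n^{2\delta_{k_n}}\to 1$; all of these estimates check out (including the implicit point that $\sup_k|\gamma(k)|<1$, which does follow from $\gamma(n)\to0$ and non-degeneracy). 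For the second assertion your treatment is arguably more complete than the paper's: running the interval version of the comparison inequality on $\{|Y_k|\le v_n\}$ and calibrating $2n(1-\Phi(v_n))\to e^{-x}$ gives the two-sided Gumbel limit directly, whereas deducing it purely "by symmetry" from the one-sided result tacitly requires an asymptotic-independence argument between $\max_k Y_k$ and $\min_k Y_k$ that the paper does not spell out. The trade-off is the usual one: the paper's citation keeps the exposition short, while your self-contained argument makes transparent exactly where the condition $\log(n)\gamma(n)\to0$ is used.
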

The first part is Theorem 3.1 from \cite{berman1964limit}. The second part is readily implied by the symmetry of the standard normal distribution.

\begin{lem}[Rescaled Minima]\label{Minimum}
Let $(Y_n)_{n\geq 1}$ be a stationary sequence of normally distributed random variables with $\E[Y_n]=0$, and with
\begin{eqnarray*}
\lim_{n\to\infty}\mathbb{P}\left(a_n\left( \max_{1\leq k\leq n}Y_k-b_n\right)\leq x \right)=\exp(-\exp(-x)),\quad x\in\mathbb{R},
\end{eqnarray*}
where $a_n$ and $b_n$ are given by
\begin{eqnarray*}
a_n=\sqrt{2\log(n)},\quad b_n=a_n-\frac{\log(\log( n))+\log(4\pi)}{2a_n}.
\end{eqnarray*}
Then it holds as well that
\begin{eqnarray*}
\lim_{n\to\infty}\mathbb{P}\left(a_n\left( \min_{1\leq k\leq n}Y_k+b_n\right)\leq x \right)=1-\exp(-\exp(x)),\quad x\in\mathbb{R}.
\end{eqnarray*}
\end{lem}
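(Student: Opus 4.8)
The plan is to reduce the minimum to a maximum by exploiting the symmetry of the centred Gaussian law, and then to invoke the hypothesised Gumbel convergence for the maximum. First I would observe that, since $(Y_k)_{k\geq1}$ is a stationary sequence of centred normal random variables, the negated sequence $(-Y_k)_{k\geq1}$ has the same finite-dimensional distributions: for any indices $k_1,\dots,k_m$ the vector $(-Y_{k_1},\dots,-Y_{k_m})$ is centred Gaussian with the same covariance matrix as $(Y_{k_1},\dots,Y_{k_m})$, hence equal in law. In particular, for every fixed $n$ the random variable $\max_{1\leq k\leq n}(-Y_k)$ has the same distribution as $\max_{1\leq k\leq n}Y_k$, so the assumed Gumbel convergence applies verbatim to $\max_{1\leq k\leq n}(-Y_k)$ with the same sequences $a_n,b_n$.

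Next I would use the elementary identity $\min_{1\leq k\leq n}Y_k=-\max_{1\leq k\leq n}(-Y_k)$ to rewrite the event of interest. Setting $M_n:=\max_{1\leq k\leq n}(-Y_k)$, the event $\{a_n(\min_{1\leq k\leq n}Y_k+b_n)\leq x\}$ is, after substituting the identity and flipping the inequality under negation, the same as $\{a_n(M_n-b_n)\geq -x\}$. Passing to complements gives
\begin{equation*}
\P\big(a_n(\min_{1\leq k\leq n}Y_k+b_n)\leq x\big)=1-\P\big(a_n(M_n-b_n)< -x\big).
\end{equation*}
Because $M_n$ is equal in distribution to $\max_{1\leq k\leq n}Y_k$, the probability on the right-hand side is governed by the hypothesised limit, evaluated at the point $-x$.

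Finally I would address the only subtlety, namely the switch from a non-strict to a strict inequality: since the limiting Gumbel cumulative distribution function $y\mapsto\exp(-\exp(-y))$ is continuous, the quantities $\P(a_n(M_n-b_n)< -x)$ and $\P(a_n(M_n-b_n)\leq -x)$ share the common limit $\exp(-\exp(-(-x)))=\exp(-\exp(x))$. Combining this with the displayed identity yields that the target probability converges to $1-\exp(-\exp(x))$, which is exactly the asserted limit. I do not expect a genuine obstacle here, as the argument is a pure symmetry reduction; the only points requiring care are the correct bookkeeping of the inequality direction under negation and the continuity argument that permits ignoring the strict-versus-non-strict distinction in the limit.
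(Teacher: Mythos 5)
Your proposal is correct and follows essentially the same route as the paper's own proof: reduce the minimum to the negative of the maximum via the distributional symmetry of the centred Gaussian sequence, flip the inequality, and pass to the limit. The only difference is that you make explicit the strict-versus-non-strict inequality point, which the paper glosses over but which is indeed harmless by continuity of the Gumbel distribution function.
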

\begin{proof}
By the symmetry of the normal distribution, the following equalities in distribution hold true:
\begin{eqnarray*}
\min_{1\leq k\leq n}Y_k\stackrel{d}{=}\min_{1\leq k\leq n}-Y_k\stackrel{d}{=}-\max_{1\leq k\leq n}Y_k.
\end{eqnarray*}
Therefore, we conclude that
\begin{eqnarray*}
\lim_{n\to\infty}\mathbb{P}\left(a_n\left( \min_{1\leq k\leq n}Y_k+b_n\right)\leq x \right)&=&1-\lim_{n\to\infty}\mathbb{P}\left(a_n\left( \max_{1\leq k\leq n}Y_k-b_n\right)\leq -x \right)\\
&=& 1-\exp(-\exp(x)).
\end{eqnarray*}
\end{proof}

\subsection{Asymptotic behaviour of the normalization and spot volatility estimator}
Since $(\sigma_s)_{s\in[0,1]}$ is continuous by Assumption \ref{Annahmen} on a compact interval, $(\sigma_s)_{s\in[0,1]}$ is also bounded, i.e.\ there exist constants $V$ and $K$, such that the upper bound in Assumption \ref{Annahmen} holds with constant $K$ and
\begin{align}0<V\leq \sigma_s\leq K,\quad 0\leq s\leq 1\,.\label{bounded}\end{align}
Throughout this section, $K_1$ and $K_2$ denote positive constants, which can change from line to line. $C_p$ denotes the $p$-th absolute moment of a standard normal distribution, $p\in(0,1]$. Set $ C_H=(4-2^{2H})$ for $H\in(0,1)$. This constant occurs in the variance of second-order increments of fractional Brownian motion, $\E[ (\Delta_{n,2}^{(2)}B^H)^2]= C_H n^{-2H}$. For $(h_n)_{n\in\mathbb{N}}$ a sequence of natural numbers with $\lim_{n\to\infty}h_n=\infty$, and $\lim_{n\to\infty}n^{-1}h_n=0$, we define for $p\in(0,1]$,   
    \begin{align}
    \label{sigmahat}\widehat{\sigma_{(k h_n)/n}^p}=n^{pH}h_n^{-1}C_p^{-1}\sum_{j=(k-1)h_n+2}^{kh_n} \Big\vert \Delta_{n,j}^{(2)}X \Big\vert^p,\quad k=1,\ldots, \lfloor n/h_n\rfloor.
    \end{align}
		We first consider the asymptotic behaviour of this normalization factor under the null hypothesis of no jumps. The statistic $\widehat{\sigma_{(k h_n)/n}^p}$ is in fact a consistent estimator for $C_H^{\frac{p}{2}}\sigma_{(k h_n)/n}^p$. We now determine bounds for the bias and the variance of this estimation. 
\begin{lem}[Bias]\label{Bias}
Assume that  $(\sigma_s)_{s\in[0,1]}$ satisfies Assumption \ref{Annahmen}. If $J_t= 0$, for every $t\in[0,1]$, it holds true that
\begin{eqnarray*}
\left(\mathbb{E}\left[\widehat{\sigma_{(k h_n)/n}^p}\right]-C_H^{\frac{p}{2}}\sigma_{(k h_n)/n}^p \right)^2=\mathcal{O}\left( n^{-2p\alpha}\right)+\mathcal{O}\Big(\Big(\frac{h_n}{n}\Big)^{2\alpha}\Big),\quad k=1,\ldots, \lfloor n/h_n\rfloor.
\end{eqnarray*}
\end{lem}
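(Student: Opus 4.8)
The plan is to control the bias of the power-variation estimator $\widehat{\sigma_{(kh_n)/n}^p}$ by comparing the increments $\Delta_{n,j}^{(2)}X = \Delta_{n,j}^{(2)}Y$ (since $J\equiv 0$ here) against an idealized version where the volatility is frozen at the grid point $(kh_n)/n$. The key observation is that $\Delta_{n,j}^{(2)}Y = \int \sigma_s\,\mathrm{d}B_s^H$ over the relevant window, and for each $j$ in the block $(k-1)h_n+2,\ldots,kh_n$ one expects $n^H\Delta_{n,j}^{(2)}Y \approx \sigma_{(kh_n)/n}\cdot n^H\Delta_{n,j}^{(2)}B^H$, with the latter being centered Gaussian with variance $C_H$. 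Taking the $p$-th absolute moment of such a Gaussian produces exactly the factor $C_H^{p/2}\sigma_{(kh_n)/n}^p\,C_p$, and after multiplying by $n^{pH}h_n^{-1}C_p^{-1}$ and summing over the $h_n$ terms, the leading term matches the target $C_H^{p/2}\sigma_{(kh_n)/n}^p$. The bias is therefore driven entirely by the \emph{deviation} of the true integral from this frozen-coefficient approximation.

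\textbf{Decomposition of the error.} First I would write, for each $j$ in the block,
\[
\Delta_{n,j}^{(2)}Y = \sigma_{(kh_n)/n}\,\Delta_{n,j}^{(2)}B^H + R_{n,j},
\]
where $R_{n,j}$ collects the contribution of the varying volatility, and bound $|R_{n,j}|$ using the H\"older continuity of $(\sigma_s)$ from Assumption~\ref{Annahmen}(ii). The error in $\sigma$ over the block has two distinct scales: the local fluctuation of $\sigma$ over a single second-order increment of length $O(n^{-1})$, contributing a term of order $n^{-\alpha}$ relative to the increment size $n^{-H}$; and the displacement of $\sigma_{j/n}$ from $\sigma_{(kh_n)/n}$ across the whole block of width $h_n/n$, contributing a term of order $(h_n/n)^\alpha$. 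Using the elementary inequality $\big||a|^p-|b|^p\big|\le |a-b|^p$ for $p\in(0,1]$, the error in the $p$-th absolute moment is controlled by $\E[|n^H R_{n,j}|^p]$, which after the above estimates is $\mathcal{O}(n^{-p\alpha}) + \mathcal{O}((h_n/n)^{p\alpha})$. Averaging over $j$ and squaring the resulting bias gives $\mathcal{O}(n^{-2p\alpha}) + \mathcal{O}((h_n/n)^{2p\alpha})$; one then absorbs the exponent $p\alpha$ into $\alpha$ by noting the stated bound is written with $\alpha$ rather than $p\alpha$, so I would track the exponents carefully and confirm the claimed form matches (the paper's statement uses $n^{-2p\alpha}$ and $(h_n/n)^{2\alpha}$, so the two sources of error are attributed to the two terms respectively).

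\textbf{Handling the Gaussian moment precisely.} A technical point is that $n^H\Delta_{n,j}^{(2)}B^H$ is centered Gaussian but its variance is exactly $C_H$ only asymptotically; for finite $n$ there may be a small correction, and the sequence is stationary but \emph{not} independent across $j$. For the bias computation, however, only the \emph{marginal} moments $\E[|\Delta_{n,j}^{(2)}Y|^p]$ enter (dependence affects only the variance, treated in a separate lemma), so I would simply compute each marginal expectation exactly and sum. Since $\E[|Z|^p]=C_p$ for $Z\sim\mathcal{N}(0,1)$, scaling gives $\E[|n^H\sigma_{(kh_n)/n}\Delta_{n,j}^{(2)}B^H|^p] = C_p\,C_H^{p/2}\sigma_{(kh_n)/n}^p$ up to the exact finite-$n$ value of the increment variance, whose deviation from $C_Hn^{-2H}$ I expect to be of smaller order and would verify decays fast enough to be absorbed.

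\textbf{Main obstacle.} The principal difficulty lies in rigorously bounding $\E[|n^HR_{n,j}|^p]$, i.e., quantifying how replacing the variable $\sigma_s$ inside the pathwise Young integral by the constant $\sigma_{(kh_n)/n}$ perturbs the $p$-th moment of the second-order increment. Because the integral is a Riemann--Stieltjes (Young) integral rather than an It\^o integral, I cannot invoke isometry; instead I would need a pathwise or Gaussian-analytic estimate on $\int(\sigma_s-\sigma_{(kh_n)/n})\,\mathrm{d}B_s^H$ over a short interval, using that $\sigma$ is $\alpha$-H\"older with $\alpha>1-H$ to guarantee the integral is well-defined and to extract the correct power of the interval length. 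Controlling this while keeping the dependence on both the single-increment scale $n^{-\alpha}$ and the block-width scale $(h_n/n)^\alpha$ separate, and confirming they land in the two advertised error terms, is the crux of the argument.
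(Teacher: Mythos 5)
Your overall strategy (freeze the volatility, compute the Gaussian $p$-th moment exactly, and attribute the bias to two error scales) is the right one, and your worry about the finite-$n$ variance of $n^H\Delta_{n,j}^{(2)}B^H$ is unnecessary: by self-similarity and stationarity it equals $C_H=4-2^{2H}$ exactly, not just asymptotically. However, there is a genuine gap in how you handle the block-width error. You propose to write $\Delta_{n,j}^{(2)}Y=\sigma_{(kh_n)/n}\Delta_{n,j}^{(2)}B^H+R_{n,j}$, i.e.\ to freeze the coefficient at the single point $(kh_n)/n$ for the whole block, and then to pass to $p$-th moments via $\big||a|^p-|b|^p\big|\le|a-b|^p$. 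The part of $R_{n,j}$ coming from the displacement of $\sigma$ across the block is of size $(h_n/n)^{\alpha}n^{-H}$, so this route yields a bias contribution of order $(h_n/n)^{p\alpha}$ and, after squaring, $(h_n/n)^{2p\alpha}$ --- strictly weaker than the claimed $(h_n/n)^{2\alpha}$ whenever $p<1$. You notice the mismatch yourself but only say you would ``track the exponents carefully and confirm the claimed form matches''; with your decomposition it does not match, and the exponent matters (it is exactly what produces the optimal bandwidth $h_n\propto n^{2\alpha/(2\alpha+1)}$ in the subsequent corollary).

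The paper avoids this loss by splitting the error in two stages with \emph{different} freezing points. First it compares $|\Delta_{n,j}^{(2)}X|^p$ with $\sigma_{(j-1)/n}^p|\Delta_{n,j}^{(2)}B^H|^p$, freezing locally at $(j-1)/n$; here the subadditivity of $x\mapsto x^p$ and the Love--Young inequality cost only $n^{-p(\alpha+H)}$ per increment, giving the $n^{-2p\alpha}$ term. Only \emph{after} taking expectations, so that $\mathbb{E}[|\Delta_{n,j}^{(2)}B^H|^p]=C_pC_H^{p/2}n^{-pH}$ factors out exactly, does it compare $\sigma_{(j-1)/n}^p$ with $\sigma_{(kh_n)/n}^p$. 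This difference is now a deterministic difference of values of $\sigma^p$, and since $\sigma$ is bounded below by $V>0$ the map $x\mapsto x^p$ is Lipschitz on $[V,K]$, so $\sigma^p$ inherits the full H\"older exponent $\alpha$ and the block term is $\mathcal{O}((h_n/n)^{2\alpha})$. To repair your argument you need this intermediate freezing at $(j-1)/n$ together with the observation that the $(h_n/n)$-scale comparison can be done on the deterministic coefficients rather than on the random increments.
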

\begin{proof}
Define
\begin{eqnarray*}
A_n=\bigg(n^{pH}h_n^{-1}C_p^{-1}\sum_{j=(k-1)h_n+2}^{kh_n}\Big(\Big\vert \Delta_{n,j}^{(2)}X \Big\vert^p-\sigma_{(j-1)/n}^p\Big\vert \Delta_{n,j}^{(2)}B^H \Big\vert^p\Big) \bigg)
\end{eqnarray*}
and
\begin{eqnarray*}
B_n= \bigg(n^{pH}h_n^{-1}C_p^{-1}\sum_{j=(k-1)h_n+2}^{kh_n}\sigma_{(j-1)/n}^p\mathbb{E}\left[\Big\vert \Delta_{n,j}^{(2)}B^H \Big\vert^p \right]-C_H^{\frac{p}{2}}\sigma_{(k h_n)/n}^p \bigg).
\end{eqnarray*}
Using standard estimates, we see that it is enough to show that the following two inequalities hold:
\begin{eqnarray*}
\mathbb{E}[A_n^2]\leq K_1 n^{-2p\alpha}\quad \text{ and } \quad B_n^2\leq K_2 \left( \frac{h_n}{n}\right)^{2\alpha}.
\end{eqnarray*}
Writing second-order increments as the differences of first-order increments, a standard inequality yields that
\begin{eqnarray*}
A_n&\leq& n^{pH}h_n^{-1}C_p^{-1}\sum_{j=(k-1)h_n+2}^{kh_n} \Big\vert \Delta_{n,j}^{(1)}X-\sigma_{(j-1)/n}\Delta_{n,j}^{(1)}B^H \Big\vert^p\\
 &+&  n^{pH}h_n^{-1}C_p^{-1}\sum_{j=(k-1)h_n+2}^{kh_n} \Big\vert \Delta_{n,j-1}^{(1)}X-\sigma_{(j-1)/n}\Delta_{n,j-1}^{(1)}B^H \Big\vert^p\,.
\end{eqnarray*}
Now, the Love-Young inequality, see Theorem 1.16 in \cite{kubilius2017parameter}, implies that
\begin{eqnarray*}
   \Big\vert \Delta_{n,j}^{(1)}X-\sigma_{(j-1)/n}\Delta_{n,j}^{(1)}B^H \Big\vert^p\leq  C(p,\alpha,H) n^{-p(\alpha+H)}
   \end{eqnarray*}
 uniformly for all $j$, and analogously with the shifted increments that
   \begin{eqnarray*}
    \Big\vert \Delta_{n,j-1}^{(1)}X-\sigma_{(j-1)/n}\Delta_{n,j-1}^{(1)}B^H \Big\vert^p\leq C(p,\alpha,H) n^{-p(\alpha+H)},
\end{eqnarray*}
where $C(p,\alpha,H)$ is a positive constant, which comes from the H\"older continuity of the fractional Brownian motion and $(\sigma_s)_{s\in[0,1]}$. Note that the constant coming from the H\"older continuity of $(\sigma_s)_{s\in[0,1]}$ is bounded by Assumption \ref{Annahmen} and that, by Theorem 1 of \cite{azmoodeh2014necessary}, the constant coming from the H\"older continuity of the fractional Brownian motion has moments of all order. Thus, we conclude that 
\begin{eqnarray*}
   \mathbb{E}[A_n^2]&\leq& 2\mathbb{E}\left[\left(  n^{pH}h_n^{-1}C_p^{-1}\sum_{j=(k-1)h_n+2}^{kh_n} \Big\vert \Delta_{n,j}^{(1)}X-\sigma_{(j-1)/n}\Delta_{n,j}^{(1)}B^H \Big\vert^p   \right)^2  \right]\\
   &+& 2\mathbb{E}\left[\left( n^{pH}h_n^{-1}C_p^{-1}\sum_{j=(k-1)h_n+2}^{kh_n} \Big\vert \Delta_{n,j-1}^{(1)}X-\sigma_{(j-1)/n}\Delta_{n,j-1}^{(1)}B^H \Big\vert^p \right)^2  \right]\leq K_1  n^{-2p\alpha}.
\end{eqnarray*} 
Applying the Cauchy-Schwarz inequality and the H\"older continuity of $(\sigma_s^p)_{s\in[0,1]}$,  yields that
\begin{eqnarray*}
B_n^2&=&\bigg(C_H^{\frac{p}{2}}h_n^{-1}\sum_{j=(k-1)h_n+2}^{kh_n}\left(\sigma_{(j-1)/n}^p - \sigma_{(k h_n)/n}^p\right) \bigg)^2\\
&\leq& C_H^{p} h_n^{-1} \sum_{j=(k-1)h_n+1}^{kh_n}\left(\sigma_{(j-1)/n}^p - \sigma_{(k h_n)/n}^p\right)^2\\
&\leq& K_2 h_n^{-1}   \sum_{j=(k-1)h_n+1}^{kh_n}\left(\frac{k h_n}{n}-\frac{j-1}{n}\right)^{2\alpha}\\
&\leq & K_2 \left(\frac{h_n}{n}\right)^{2\alpha}.
\end{eqnarray*}
\end{proof}
\begin{lem}[Variance]\label{Varianz}
Assume that  $(\sigma_s)_{s\in[0,1]}$ satisfies Assumption \ref{Annahmen}. If $J_t= 0$, for every $t\in[0,1]$, it holds true that
\begin{eqnarray*}
\mathbb{V}\text{ar}\left( \widehat{\sigma_{(k h_n)/n}^p} \right)=\mathcal{O}(n^{-2p\alpha})+\mathcal{O}(h_n^{-1}),\quad k=1,\ldots, \lfloor n/h_n\rfloor .
\end{eqnarray*}
\end{lem}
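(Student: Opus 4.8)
The plan is to reuse the decomposition already set up in the proof of Lemma~\ref{Bias}. Writing $\widehat{\sigma_{(kh_n)/n}^p} = A_n + V_n$ with $A_n$ exactly as defined there and
\[
V_n = n^{pH}h_n^{-1}C_p^{-1}\sum_{j=(k-1)h_n+2}^{kh_n}\sigma_{(j-1)/n}^p\big\vert \Delta_{n,j}^{(2)}B^H \big\vert^p,
\]
the elementary inequality $\var(A_n + V_n)\le 2\var(A_n)+2\var(V_n)$ reduces the claim to two separate estimates. For $A_n$ the proof of Lemma~\ref{Bias} already yields $\E[A_n^2]\le K_1 n^{-2p\alpha}$, so $\var(A_n)\le\E[A_n^2]=\mathcal{O}(n^{-2p\alpha})$, which produces the first term of the asserted rate. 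It remains to show $\var(V_n)=\mathcal{O}(h_n^{-1})$, which is the core of the argument.

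For $V_n$ I would first standardise the second-order increments. Since $\E[(\Delta_{n,j}^{(2)}B^H)^2]=C_H n^{-2H}$, the variables $\xi_j:=n^{H}C_H^{-1/2}\Delta_{n,j}^{(2)}B^H$ form a stationary standard Gaussian sequence and $\vert\Delta_{n,j}^{(2)}B^H\vert^p=C_H^{p/2}n^{-pH}\vert\xi_j\vert^p$. Substituting, the factors $n^{\pm pH}$ cancel and
\[
\var(V_n)=C_H^{p}C_p^{-2}h_n^{-2}\sum_{j,l=(k-1)h_n+2}^{kh_n}\sigma_{(j-1)/n}^p\sigma_{(l-1)/n}^p\,\cov\big(\vert\xi_j\vert^p,\vert\xi_l\vert^p\big).
\]
Because $(\sigma_s)$ is bounded under Assumption~\ref{Annahmen}, the prefactors $\sigma^p$ are at most $K^p$, so it suffices to control the Gaussian covariances uniformly. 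The key tool is the Hermite-rank bound: the centred function $g(x)=\vert x\vert^p-C_p$ is even, hence its first Hermite coefficient vanishes and its Hermite rank equals $2$; consequently $\vert\cov(\vert\xi_j\vert^p,\vert\xi_l\vert^p)\vert\le(C_{2p}-C_p^2)\,r(j-l)^2$, where $r(\cdot)$ is the autocorrelation of $(\xi_j)$ and $C_{2p}-C_p^2=\var(\vert\xi_1\vert^p)$ is a finite constant.

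It then remains to sum the squared correlations. Collecting the double sum by lag gives $\sum_{j,l}r(j-l)^2\le h_n\sum_{m\in\mathbb{Z}}r(m)^2$, so that $\var(V_n)\le C_H^{p}C_p^{-2}K^{2p}(C_{2p}-C_p^2)\,h_n^{-1}\sum_{m}r(m)^2$. The crucial point, and the reason second-order rather than first-order increments are used, is that the autocorrelations of second-order increments of fractional Brownian motion decay like $\vert m\vert^{2H-4}$, so $\sum_m r(m)^2<\infty$ for every $H\in(0,1)$; by contrast, first-order increments have squared correlations of order $\vert m\vert^{4H-4}$, which fail to be summable for $H\ge 3/4$. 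With this finite constant the bound $\var(V_n)=\mathcal{O}(h_n^{-1})$ follows, and combining the two pieces completes the proof.

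I expect the main obstacle to be the covariance estimate for $\vert\xi_j\vert^p$: one must justify the Hermite-rank-two inequality (verifying that the linear Hermite coefficient vanishes by the symmetry of $\vert x\vert^p$ and that the second coefficient $C_{p+2}-C_p=pC_p$ is nonzero), and then pin down the exact polynomial decay of the second-order fBm autocovariances to guarantee square-summability uniformly over $H\in(0,1)$. The remaining manipulations, namely the split $\var\le 2\var+2\var$, the standardisation, and the diagonal summation, are routine.
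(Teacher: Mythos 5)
Your proposal is correct and follows essentially the same route as the paper: the same split into the approximation error $A_n$ (reused from the bias lemma) and the centred fractional-Brownian-motion part, with the latter's variance controlled by the Hermite-rank-two covariance bound for $\vert\cdot\vert^p$ of stationary Gaussians (the paper cites Proposition 5.2.4 of Pipiras and Taqqu where you derive it directly) and the square-summability of the second-order increment correlations $\rho(m)^2\propto\vert m\vert^{4H-8}$. No gaps.
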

\begin{proof}
Consider $A_n$ from the previous proof and define
\begin{eqnarray*}
C_n= \bigg(n^{pH}h_n^{-1}C_p^{-1}\sum_{j=(k-1)h_n+2}^{kh_n}\sigma_{(j-1)/n}^p\left(\Big\vert \Delta_{n,j}^{(2)}B^H \Big\vert^p-\mathbb{E}\left[\Big\vert \Delta_{n,j}^{(2)}B^H \Big\vert^p \right] \right)\bigg).
\end{eqnarray*}
Using standard estimates, we see that it is enough to show that the following two inequalities hold:
\begin{eqnarray*}
\mathbb{E}[A_n^2]\leq K_1 n^{-2p\alpha} \quad \text{ and } \quad \mathbb{E}[C_n^2]\leq K_2 h_n^{-1}.
\end{eqnarray*}
The first one is established in the previous proof. We are left to prove the second one. In the sequel, we will exploit the self-similarity of fractional Brownian motion for a convenient notation and therefore consider our fractional Brownian motion $B^H$ defined for all times $t\in[0,\infty)$. In line with our notation for second-order increments from Section \ref{subsection 2.1}, we write
\begin{align} \label{ss}\Delta_{1,j}^{(2)}B^H=B^H_{j}-2B^H_{j-1} + B^H_{j-2}~\mbox{, for}~j\in\N.\end{align} 
Next, define 
\begin{eqnarray*}
\rho(k)=\mathbb{C}\text{ov}\left(\Delta_{1,j+k}^{(2)}B^H, \Delta_{1,j}^{(2)}B^H  \right),~\rho(-k)=\rho(k), \quad\text{ for } k\in\N,
\end{eqnarray*}
 and then use a minor modification of Proposition 5.2.4 from \cite{pipiras2017long}, to obtain that
 \begin{eqnarray*}
\mathbb{E}[C_n^2]&=& C_p^{-2} h_n^{-2}\sum_{j=2}^{h_n}\sum_{i=2}^{h_n}\sigma_{(j-1)/n}^p\sigma_{(i-1)/n}^p\mathbb{C}\text{ov}\left( \Big\vert \Delta_{1,j}^{(2)}B^H \Big\vert^p,\Big\vert \Delta_{1,i}^{(2)}B^H \Big\vert^p \right)\\
&\leq& K_2 C_p^{-2} h_n^{-2}\sum_{j=2}^{h_n}\sum_{i=2}^{h_n}\sigma_{(j-1)/n}^p\sigma_{(i-1)/n}^p (\rho( j-i))^2\\
&\leq & K_2 C_p^{-2} K^{2p} h_n^{-2}\sum_{k=-(h_n-2)}^{h_n-2}(h_n-\vert k \vert)(\rho(k))^2\\
&\leq & K_2 C_p^{-2} K^{2p} h_n^{-1}\sum_{k\in\mathbb{Z}}(\rho(k))^2\,.
\end{eqnarray*}
Since $(\rho(n))^2\propto n^{4H-8}$, see, for instance, Lemma 1 in \cite{coeurjolly2001estimating}, the last sum is finite and we get the desired result.
\end{proof}
The bias-variance decomposition readily allows finding an optimal choice of $h_n$, which minimises the mean squared error. 
\begin{cor} 
Assume that $(\sigma_s)_{s\in[0,1]}$ satisfies Assumption \ref{Annahmen}. If we further assume that $h_n$ satisfies the condition $h_n\propto n^{2\alpha/(2\alpha+1)}$ and that $J_t= 0$, for every $t\in[0,1]$, it holds true that
\begin{eqnarray*}
    \mathbb{V}\text{ar}\left( \widehat{\sigma_{(k h_n)/n}^p} \right)+\left(\mathbb{E}\left[\widehat{\sigma_{(k h_n)/n}^p}\right]-C_H^{\frac{p}{2}}\sigma_{(k h_n)/n}^p \right)^2 =\mathcal{O}\left(n^{-\left(\frac{2\alpha}{2\alpha+1} \wedge p\alpha\right)}\right),\quad k=1,\ldots, \lfloor n/h_n\rfloor.
    \end{eqnarray*}
\end{cor}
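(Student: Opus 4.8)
The plan is simply to combine the two preceding lemmas: the mean squared estimation error of $\widehat{\sigma_{(k h_n)/n}^p}$ equals its variance plus its squared bias, and both quantities have already been bounded. By Lemma \ref{Varianz} the variance is $\mathcal{O}(n^{-2p\alpha})+\mathcal{O}(h_n^{-1})$, and by Lemma \ref{Bias} the squared bias is $\mathcal{O}(n^{-2p\alpha})+\mathcal{O}((h_n/n)^{2\alpha})$. Adding the two expressions, the error is therefore $\mathcal{O}(n^{-2p\alpha})+\mathcal{O}(h_n^{-1})+\mathcal{O}((h_n/n)^{2\alpha})$, uniformly in $k$. Nothing further is needed beyond inserting the prescribed bandwidth and identifying the dominant contribution.

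Next I would substitute $h_n\propto n^{2\alpha/(2\alpha+1)}$ into the two bandwidth-dependent terms. Since then $h_n/n\propto n^{2\alpha/(2\alpha+1)-1}=n^{-1/(2\alpha+1)}$, one gets $(h_n/n)^{2\alpha}\propto n^{-2\alpha/(2\alpha+1)}$, which coincides exactly with $h_n^{-1}\propto n^{-2\alpha/(2\alpha+1)}$. This equality is the whole point of the choice $h_n\propto n^{2\alpha/(2\alpha+1)}$: it balances the variance-type term $h_n^{-1}$ against the bias-type term $(h_n/n)^{2\alpha}$, minimising the larger of the two and hence the mean squared error up to the irreducible $n^{-2p\alpha}$ contribution.

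Collecting terms then gives a total error of order $\mathcal{O}(n^{-2p\alpha})+\mathcal{O}(n^{-2\alpha/(2\alpha+1)})$, so that the rate is governed by the slower-decaying of the two exponents, namely $n^{-(2p\alpha\wedge 2\alpha/(2\alpha+1))}$. Because $n^{-2p\alpha}=\mathcal{O}(n^{-p\alpha})$, this is in particular of order $n^{-(p\alpha\wedge 2\alpha/(2\alpha+1))}$, which is the asserted bound; I would note explicitly that the sharper exponent $2p\alpha$ could be retained, the stated $p\alpha$ being a convenient weaker formulation that suffices for the later arguments.

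There is no genuine obstacle here: the entire content sits in Lemmas \ref{Bias} and \ref{Varianz}, and the corollary is pure bookkeeping. The only points deserving care are the elementary exponent arithmetic showing that $(h_n/n)^{2\alpha}$ and $h_n^{-1}$ are of the same order under the optimal bandwidth, and the verification that taking the maximum of the surviving rates produces precisely the minimum $\wedge$ appearing in the statement; the uniformity in $k$ is inherited directly from the two lemmas and requires no additional work.
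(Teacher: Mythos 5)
Your proposal is correct and follows exactly the route the paper intends: the paper gives no separate proof of this corollary, remarking only that the bias--variance decomposition of Lemmas \ref{Bias} and \ref{Varianz} readily yields it, and your exponent arithmetic (including the observation that $2p\alpha$ could replace the stated $p\alpha$) fills in precisely that bookkeeping.
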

To prepare the proof of consistency of our test under the alternative hypothesis, we have to show that the normalization with the estimated spot volatility is sufficiently robust with respect to the additive jump component $(J_t)$. This will be formalised in the next lemma.

\begin{lem}\label{Robustheit}
Suppose Assumption \ref{Annahmen} and that Assumption \ref{AnnahmeSpruenge} is satisfied for some $p \in (0,1]$. For $h_n\propto n^{\beta}$, with $\beta\in (pH,1)$, it holds that
    \begin{eqnarray*}
\max_{k}\bigg\vert n^{pH}\sum_{j=(k-1)h_n+2}^{kh_n}\frac{\left(\Big\vert \Delta_{n,j}^{(2)}X\Big\vert^p-\Big\vert \Delta_{n,j}^{(2)}Y \Big\vert^p\right)}{h_nC_p}\bigg\vert=\KLEINO_{\mathbb{P}}(n^{-\gamma}) \quad \text{for}\quad 0<\gamma<\beta-pH.
\end{eqnarray*}
\end{lem}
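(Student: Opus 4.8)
The plan is to control the difference between the jump-inclusive and jump-free power variations using the elementary subadditivity inequality $\big||a|^p-|b|^p\big|\le|a-b|^p$, which holds for $p\in(0,1]$. Writing $\Delta_{n,j}^{(2)}X=\Delta_{n,j}^{(2)}Y+\Delta_{n,j}^{(2)}J$, this gives $\big|\,|\Delta_{n,j}^{(2)}X|^p-|\Delta_{n,j}^{(2)}Y|^p\,\big|\le|\Delta_{n,j}^{(2)}J|^p$. Consequently the quantity inside the maximum is bounded, uniformly in $k$, by
\begin{eqnarray*}
n^{pH}(h_nC_p)^{-1}\sum_{j=(k-1)h_n+2}^{kh_n}\big|\Delta_{n,j}^{(2)}J\big|^p
\le n^{pH}(h_nC_p)^{-1}\sum_{j=2}^{n}\big|\Delta_{n,j}^{(2)}J\big|^p\,.
\end{eqnarray*}
Thus it suffices to bound the single global sum $\sum_{j=2}^n|\Delta_{n,j}^{(2)}J|^p$, which decouples the problem from the volatility and from the continuous component entirely.

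Next I would pass from second-order to first-order increments of the jump process. Since $\Delta_{n,j}^{(2)}J=\Delta_{n,j}^{(1)}J-\Delta_{n,j-1}^{(1)}J$, the inequality $|a-b|^p\le|a|^p+|b|^p$ for $p\in(0,1]$ yields $|\Delta_{n,j}^{(2)}J|^p\le|\Delta_{n,j}^{(1)}J|^p+|\Delta_{n,j-1}^{(1)}J|^p$, so that $\sum_{j=2}^n|\Delta_{n,j}^{(2)}J|^p\le 2\sum_{j=1}^n|\Delta_{n,j}^{(1)}J|^p$. By Assumption \ref{AnnahmeSpruenge} applied with $\tilde p$, and since $0<p\le\tilde p$ makes the corresponding sum for $p$ dominated (a finite $\tilde p$-variation along a refining grid controls the $p$-variation for smaller $p$), the random variable $\sum_{j=1}^n|\Delta_{n,j}^{(1)}J|^p$ is $\mathcal{O}_{\P}(1)$, i.e.\ almost surely bounded in $n$. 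Hence the full expression is $\mathcal{O}_{\P}\big(n^{pH}h_n^{-1}\big)$.

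Finally I would insert the rate $h_n\propto n^{\beta}$ to obtain that the left-hand side is $\mathcal{O}_{\P}\big(n^{pH-\beta}\big)$. For any $\gamma$ with $0<\gamma<\beta-pH$ we then have $n^{pH-\beta}=n^{-(\beta-pH)}=\KLEINO(n^{-\gamma})$, which upgrades the $\mathcal{O}_{\P}$ to the claimed $\KLEINO_{\P}(n^{-\gamma})$, since multiplying an $\mathcal{O}_{\P}(1)$ quantity by a deterministic $\KLEINO(1)$ factor gives $\KLEINO_{\P}(1)$. This completes the argument.

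The step I expect to require the most care is the transfer of Assumption \ref{AnnahmeSpruenge}, which is stated only for the exponent $\tilde p$, to the smaller exponent $p$. The clean statement is that almost-sure finiteness of $\lim_n\sum_k|\Delta_{n,k}^{(1)}J|^{\tilde p}$ implies boundedness of $\sum_k|\Delta_{n,k}^{(1)}J|^{p}$ for $p\le\tilde p$; this is where one must be slightly careful, because raising increments to a smaller power can inflate a sum of many small terms. The resolution exploits that $(J_t)$ is a pure-jump c\`adl\`ag process, so that along the dyadic-type refining grid the increments are eventually concentrated on the finitely-many intervals containing genuine jumps (whose contributions are controlled by the finite $p$-variation of a pure-jump path), while the vanishing remainder is negligible; alternatively one argues directly from $\sum_k a_k^p\le\big(\sum_k a_k^{\tilde p}\big)^{p/\tilde p}\cdot(\#\{k:a_k\neq0\})^{1-p/\tilde p}$ combined with the fact that the number of increments carrying non-negligible jump mass stays bounded. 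Everything else is a routine chain of $p\le 1$ power inequalities.
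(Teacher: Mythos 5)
Your overall route coincides with the paper's proof of Lemma \ref{Robustheit}: the pointwise subadditivity bound $\big|\,|\Delta_{n,j}^{(2)}X|^p-|\Delta_{n,j}^{(2)}Y|^p\,\big|\le|\Delta_{n,j}^{(2)}J|^p\le|\Delta_{n,j}^{(1)}J|^p+|\Delta_{n,j-1}^{(1)}J|^p$, the reduction of the block maximum to the global sum $\sum_{j}|\Delta_{n,j}^{(1)}J|^p$, and the deterministic factor $n^{pH}h_n^{-1}\propto n^{pH-\beta}=\KLEINO(n^{-\gamma})$ for $\gamma<\beta-pH$ are exactly the steps the paper takes.

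The step you yourself single out as delicate is, however, argued incorrectly. You claim that almost-sure finiteness of $\lim_n\sum_k|\Delta_{n,k}^{(1)}J|^{\tilde p}$ ``controls the $p$-variation for smaller $p$''. The monotonicity runs the other way: for increments of modulus below one we have $a^{p}\ge a^{\tilde p}$ when $p\le\tilde p$, and the paper states explicitly after Assumption \ref{AnnahmeSpruenge} that the condition is \emph{stronger} for smaller $p$. Concretely, for a stable subordinator of index $\alpha$ with $p\le\alpha<\tilde p$ the assumption holds for $\tilde p$ while $\sum_k|\Delta_{n,k}^{(1)}J|^{p}$ diverges almost surely. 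Your fallback arguments do not repair this: a pure-jump c\`adl\`ag process may have infinite jump activity, so neither ``finitely many intervals containing genuine jumps'' nor a bounded count $\#\{k:a_k\neq0\}$ is available, and the H\"older bound only gives $\sum_k a_k^{p}=\mathcal{O}_{\P}\big(n^{1-p/\tilde p}\big)$, which ruins the rate for $p<\tilde p$. The paper's proof does not attempt any such transfer; after the subadditivity step it invokes the summability condition directly for the sum $\sum_j|\Delta_{n,j}^{(1)}J|^{p}$ with the exponent $p$ appearing in the statistic. If one does want a domination argument between exponents, it goes in the opposite direction: since $\sup_{t\in[0,1]}|J_t|$ is almost surely finite, $a_k^{q}\le\big(2\sup_t|J_t|\big)^{q-\tilde p}a_k^{\tilde p}$ for $q\ge\tilde p$, i.e.\ the assumption for $\tilde p$ controls \emph{larger} powers, not smaller ones. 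You should either restate this step so that the summability hypothesis is applied with the exponent actually used, or remove the erroneous monotonicity claim.
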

\begin{proof}
Under the stated conditions, we have that $\lim_{n\to\infty}n^{pH}h_n^{-1}=0$. Therefore, it is enough to show that
\begin{eqnarray*}
\bigg\vert \Big\vert \Delta_{n,j}^{(2)}X\Big\vert^p-\Big\vert \Delta_{n,j}^{(2)}Y \Big\vert^p\bigg\vert
\leq \Big\vert \Delta_{n,j}^{(1)}J\Big\vert^p+\Big\vert \Delta_{n,j-1}^{(1)}J \Big\vert^p.
\end{eqnarray*}
Define $A_j=\left\lbrace \Big\vert  \Delta_{n,j}^{(2)}X \Big\vert^p> \Big\vert \Delta_{n,j}^{(2)}Y \Big\vert^p   \right\rbrace$
and note that 
\begin{eqnarray*}
\Big\vert\Big\vert \Delta_{n,j}^{(2)}X \Big\vert^p-\Big\vert \Delta_{n,j}^{(2)}Y \Big\vert^p\Big\vert\cdot (\mathbbm{1}_{A_j}+\mathbbm{1}_{A_j^c})
\leq\Big\vert  \Delta_{n,j}^{(2)}J \Big\vert^p,
\end{eqnarray*}
where the last inequality follows from the fact that $f(x)=\vert x \vert^p$ is subadditive with the two inequalities
\begin{eqnarray*}
&&\Big\vert  \Delta_{n,j}^{(2)}X \Big\vert^p\leq \Big\vert  \Delta_{n,j}^{(2)}Y \Big\vert^p+\Big\vert  \Delta_{n,j}^{(2)}J \Big\vert^p \quad \text{ and } \quad 
\Big\vert  \Delta_{n,j}^{(2)}Y \Big\vert^p-\Big\vert  \Delta_{n,j}^{(2)}J \Big\vert^p\leq \Big\vert  \Delta_{n,j}^{(2)}X \Big\vert^p.
\end{eqnarray*}
Applying the subadditivity to $| \Delta_{n,j}^{(2)}J |^p$ yields the result.
\end{proof}
For the proof of the Gumbel convergence, we will use uniform consistency of the spot volatility estimation, or that the normalising factors converge uniformly in probability, respectively. The next lemma will clarify this statement.

\begin{lem}\label{GK}
Assume that $(\sigma_t)_{t\in[0,1]}$ satisfies Assumption \ref{Annahmen}.
\begin{enumerate}
    \item[(i)] Let $p\in(0,1]$ and set $h_n\propto n^{\beta}$, for $\beta\in (pH,1)$. If $J_t=0$, for all $t\in[0,1]$, it holds for $0<\beta_1<\beta/2$ and $0<\beta_2<p\alpha$, that
 \begin{eqnarray*}
\max_{k}\Big\vert \widehat{\sigma_{(k h_n)/n}^p}-C_H^{\frac{p}{2}}\sigma_{(k h_n)/n}^p \Big\vert=\KLEINO_{\mathbb{P}}(n^{-(\beta_1\wedge \beta_2)}).
\end{eqnarray*}
    \item[(ii)] Suppose that  Assumption \ref{AnnahmeSpruenge} is satisfied for some $p \in (0,1]$ and select $h_n\propto n^{\beta}$, for $\beta\in (pH,1)$. For $0<\beta_1<\beta/2$, $0<\beta_2<p\alpha$, and $0<\beta_3<\beta-pH$, it holds that
    
\begin{eqnarray*}
\max_{k}\Big\vert \widehat{\sigma_{(k h_n)/n}^p}-C_H^{\frac{p}{2}}\sigma_{(k h_n)/n}^p \Big\vert=\KLEINO_{\mathbb{P}}(n^{-(\beta_1\wedge \beta_2\wedge \beta_3)}).
\end{eqnarray*}
\end{enumerate}
\end{lem}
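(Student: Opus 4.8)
The plan is to prove part (i) first and then obtain part (ii) by a reduction to it. For part (i), I would split the deviation into a deterministic bias and a centred fluctuation,
\[
\widehat{\sigma_{(kh_n)/n}^p}-C_H^{\frac p2}\sigma_{(kh_n)/n}^p=\Big(\widehat{\sigma_{(kh_n)/n}^p}-\E\big[\widehat{\sigma_{(kh_n)/n}^p}\big]\Big)+\Big(\E\big[\widehat{\sigma_{(kh_n)/n}^p}\big]-C_H^{\frac p2}\sigma_{(kh_n)/n}^p\Big).
\]
The second summand is deterministic and, by Lemma \ref{Bias}, uniformly of order $\mathcal{O}(n^{-p\alpha})+\mathcal{O}(n^{-\alpha(1-\beta)})$ over $k$; since $\beta_2<p\alpha$, and since at the rate-optimal bandwidth $h_n\propto n^{2\alpha/(2\alpha+1)}$ one has $\alpha(1-\beta)=\beta/2>\beta_1$, this is $\KLEINO(n^{-(\beta_1\wedge\beta_2)})$ and may be discarded. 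For the first summand Lemma \ref{Varianz} supplies only a pointwise $L^2$ bound of order $n^{-2p\alpha}+h_n^{-1}$, which is not enough: there are $m=\lfloor n/h_n\rfloor\propto n^{1-\beta}$ blocks, so I must pass from this pointwise control to a maximal inequality.

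The device is to sharpen the second-moment bound of Lemma \ref{Varianz} to an $L^{2q}$ bound for arbitrarily large integers $q$. Recall from the proofs that, up to the bias, the fluctuation is carried by the remainder term $A_n$ (comparing $|\Delta^{(2)}_{n,j}X|^p$ with $\sigma^p_{(j-1)/n}|\Delta^{(2)}_{n,j}B^H|^p$) and the centred Gaussian-functional sum $C_n$. For $A_n$ the Love--Young bound is pathwise, $|A_n|\le C(p,\alpha,H)\,n^{-p\alpha}$, with a random constant possessing moments of all orders by Theorem 1 of \cite{azmoodeh2014necessary}, whence $\E[|A_n|^{2q}]=\mathcal{O}(n^{-2qp\alpha})$. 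For $C_n$, which after using self-similarity is an $h_n^{-1}$-normalised sum of stationary, weakly dependent functionals of a Gaussian sequence with summable squared autocovariances $\rho(k)^2\propto k^{4H-8}$, I would obtain $\E[|C_n|^{2q}]=\mathcal{O}(h_n^{-q})$. A union bound with Markov's inequality then yields, for $\nu=\beta_1\wedge\beta_2$,
\[
\P\Big(\max_k\big|\,\widehat{\sigma_{(kh_n)/n}^p}-\E\big[\widehat{\sigma_{(kh_n)/n}^p}\big]\,\big|>\varepsilon\,n^{-\nu}\Big)\lesssim \varepsilon^{-2q}\,n^{1-\beta}\big(n^{-2q(p\alpha-\nu)}+n^{-q(\beta-2\nu)}\big),
\]
and since $\nu<p\alpha$ and $\nu<\beta/2$ both exponents inside the parentheses are strictly negative, so taking $q$ large enough beats the factor $n^{1-\beta}$ and the probability vanishes.

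The main obstacle is precisely the $L^{2q}$ estimate for $C_n$: Lemma \ref{Varianz} only delivers the second moment through summability of $\rho(k)^2$, and upgrading this to all even moments requires a higher-order bound for sums of nonlinear functionals of a stationary Gaussian sequence. The cleanest route is to expand $|\cdot|^p-\E|\cdot|^p$ in Hermite polynomials and combine the chaos expansion with hypercontractivity on Wiener chaos (or, equivalently, a cumulant/diagram bound), which propagates summability of the autocovariances into $\E[|C_n|^{2q}]=\mathcal{O}(h_n^{-q})$. Everything else --- the treatment of $A_n$, the bias, and the union bound --- is then routine.

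For part (ii) the jump component is present, and I would reduce to part (i) by the triangle inequality. Writing $\widehat{\sigma_{(kh_n)/n}^p}(X)$ and $\widehat{\sigma_{(kh_n)/n}^p}(Y)$ for the statistic \eqref{sigmahat} built from $X$ and from its continuous part $Y$,
\[
\max_k\big|\widehat{\sigma_{(kh_n)/n}^p}(X)-C_H^{\frac p2}\sigma_{(kh_n)/n}^p\big|\le \max_k\big|\widehat{\sigma_{(kh_n)/n}^p}(X)-\widehat{\sigma_{(kh_n)/n}^p}(Y)\big|+\max_k\big|\widehat{\sigma_{(kh_n)/n}^p}(Y)-C_H^{\frac p2}\sigma_{(kh_n)/n}^p\big|.
\]
The first term on the right is exactly the quantity controlled by the robustness Lemma \ref{Robustheit}, hence $\KLEINO_{\mathbb{P}}(n^{-\beta_3})$ for any $\beta_3<\beta-pH$, while the second is the null-case object of part (i) applied to the continuous process $Y$, hence $\KLEINO_{\mathbb{P}}(n^{-(\beta_1\wedge\beta_2)})$. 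Combining the two rates gives $\KLEINO_{\mathbb{P}}(n^{-(\beta_1\wedge\beta_2\wedge\beta_3)})$, as asserted.
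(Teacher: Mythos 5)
Your overall strategy coincides with the paper's: reduce the uniform statement to a high-moment bound for the block averages of the stationary sequence $\big(|\Delta^{(2)}_{1,j}B^H|^p\big)_j$, then apply Markov's inequality and a union bound over the $m\propto n^{1-\beta}$ blocks, exactly exploiting that $\beta_1<\beta/2$ and $\beta_2<p\alpha$ make the per-block tail decay fast enough to beat the factor $n^{1-\beta}$. The paper handles the Love--Young remainder by the same pathwise argument, and it obtains the needed $q$-th moment bound for the centred Gaussian-functional sum by citing an off-the-shelf Rosenthal-type inequality for stationary weakly dependent sequences (Proposition 21 of Merlev\`ede et al.), which gives $\P\big(|h_n^{-1}\sum_j C_p^{-1}|\Delta^{(2)}_{1,j}B^H|^p-C_H^{p/2}|\ge\varepsilon\big)=\mathcal{O}(h_n^{-q/2})$ for arbitrary $q$ and summable $\rho(k)$; your route via the Hermite expansion of $|\cdot|^p$ and hypercontractivity on Wiener chaos is a legitimate alternative way to the same estimate $\E[|C_n|^{2q}]=\mathcal{O}(h_n^{-q})$, though you leave it as a sketch and it requires some care because $|x|^p-\E|x|^p$ is an infinite chaos sum (the hypercontractivity constants grow with the chaos order, so you must control the coefficient decay rather than apply hypercontractivity chaos-by-chaos naively). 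Your part (ii), reducing to part (i) applied to $Y$ plus Lemma \ref{Robustheit} for the jump contribution, is exactly what the paper means by ``analogously.''

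One point deserves flagging. To dispose of the smoothing-bias contribution $\mathcal{O}\big((h_n/n)^{\alpha}\big)=\mathcal{O}\big(n^{-\alpha(1-\beta)}\big)$ you invoke the rate-optimal bandwidth $h_n\propto n^{2\alpha/(2\alpha+1)}$, for which $\alpha(1-\beta)=\beta/2>\beta_1$. The lemma, however, is stated for arbitrary $\beta\in(0,1)$, and for $\beta>2\alpha/(2\alpha+1)$ one has $\alpha(1-\beta)<\beta/2$, so this term need not be $\KLEINO(n^{-\beta_1})$ and your argument does not cover that regime. To be fair, the paper's own proof carries the same term $K_2(h_n/n)^{\alpha}\cdot\mathcal{O}_{\P}(1)$ and does not discuss its order relative to $n^{-(\beta_1\wedge\beta_2)}$ either; so this is a shared blind spot rather than an error you introduced, but as written your proof establishes the claim only for bandwidths with $\beta\le 2\alpha/(2\alpha+1)$ (or under the additional restriction $\beta_1<\alpha(1-\beta)$). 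Everything else in your proposal is sound.
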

\begin{proof}
We will prove the first part of the lemma. The second part can be shown analogously. The application of standard inequalities for maxima and the reverse triangle inequality yield the following estimate
\begin{eqnarray*}
&&\max_{k}\Big\vert  n^{pH}h_n^{-1}C_p^{-1}\sum_{j=(k-1)h_n+2}^{kh_n} \Big\vert \Delta_{n,j}^{(2)}X \Big\vert^p -C_H^{\frac{p}{2}}\sigma_{(kh_n)/n}^p  \Big\vert\\
&\leq& \max_{k} n^{pH}h_n^{-1}C_p^{-1}\sum_{j=(k-1)h_n+2}^{kh_n}\Big\vert \Delta_{n,j}^{(1)}X -\sigma_{(j-1)/n} \left( \Delta_{n,j}^{(1)}B^H\right)\Big\vert^p \\
&+& \max_{k} n^{pH}h_n^{-1}C_p^{-1}\sum_{j=(k-1)h_n+2}^{kh_n} \Big\vert \Delta_{n,j-1}^{(1)}X -\sigma_{(j-1)/n} \left( \Delta_{n,j-1}^{(2)}B^H\right)\Big\vert^p \\
&+&\max_{k}\Big\vert  n^{pH}h_n^{-1}C_p^{-1}\sum_{j=(k-1)h_n+2}^{kh_n} \sigma_{(j-1)/n}^p\Big\vert \Delta_{n,j}^{(2)}B^H \Big\vert^p -C_H^{\frac{p}{2}}\sigma_{(kh_n)/n}^p \Big\vert.
\end{eqnarray*}
By the Love-Young inequality, the first two addends are of order $\mathcal{O}_{\mathbb{P}}(n^{-p\alpha})=\KLEINO_{\mathbb{P}}(n^{-\beta_2})$. Recall the notation from \eqref{ss}. Since $(\sigma_s^p)_{s\in[0,1]}$ is H\"older continuous and bounded with the constant $K$ from \eqref{bounded}, we conclude the following inequality:
\begin{align*}
&\max_{k}\Big\vert  n^{pH}h_n^{-1}C_p^{-1}\sum_{j=(k-1)h_n+2}^{kh_n}\sigma_{(j-1)/n}^p \Big\vert \Delta_{n,j}^{(2)}B^H\Big\vert^p -C_H^{\frac{p}{2}}\sigma_{(kh_n)/n}^p    \Big\vert\\
\leq& K_2\left(\frac{h_n}{n}\right)^{\alpha}   h_n^{-1}C_p^{-1} \max_{k} \sum_{j=(k-1)h_n+2}^{kh_n}\Big\vert \Delta_{1,j}^{(2)}B^H\Big\vert^p +K^p\max_{k}\Big\vert   h_n^{-1}C_p^{-1}\sum_{j=(k-1)h_n+2}^{kh_n} \Big\vert \Delta_{1,j}^{(2)}B^H \Big\vert^p-C_H^{\frac{p}{2}}\Big\vert.
\end{align*}
We show that the second addend is of order $\KLEINO_{\mathbb{P}}(n^{-\beta_1})$ what finishes the proof. Applying Markov's inequality with $f:\mathbb{R}_{+}\to \mathbb{R}_{+}, x\mapsto x^q$, we obtain for $\varepsilon>0$ that
\begin{eqnarray*}
&&\mathbb{P}\left(\Big\vert h_n^{-1}\sum_{k=2}^{h_n}C_p^{-1}\vert  \Delta_{1,k}^{(2)}B^H \vert^p-C_H^{\frac{p}{2}}\Big\vert\geq \varepsilon \right)
\leq \frac{\mathbb{E}\left[\left( \sum_{k=2}^{h_n}  (\vert\Delta_{1,k}^{(2)}B^H\vert^p-C_pC_H^{\frac{p}{2}})  \right)^q  \right]   }{h_n^qC_p^{q}\varepsilon^q}.
\end{eqnarray*}
The time series $ (\vert \Delta_{1,j}^{(2)}B^H \vert^p-C_pC_H^{\frac{p}{2}})_{j\geq 2}$ is stationary, see Chapter 2 in \cite{coeurjolly2001estimating}. Set $\rho(k)=\mathbb{C}\text{ov}( \vert \Delta_{1,2}^{(2)}B^H\vert^p,\vert \Delta_{1,2+k}^{(2)}B^H\vert^p)$, for $k\in\mathbb{N}_0$. The Rosenthal-type inequality for stationary, weakly dependent time series in Proposition 21 from \cite{merlevede2013rosenthal} applied to $ (\vert \Delta_{1,j}^{(2)}B^H \vert^p-C_pC_H^{\frac{p}{2}})_{j\geq 2}$ implies that
\begin{align*}
\mathbb{P}\left(\Big\vert h_n^{-1}\sum_{k=2}^{h_n}C_p^{-1}\vert  \Delta_{1,k}^{(2)}B^H \vert^p-C_H^{\frac{p}{2}}\Big\vert\geq \varepsilon \right)\leq \frac{h_n^{q/2}\left(\sum_{k=0}^{\infty}\rho(k) \right)^{q/2}+h_n \E\big[\vert \Delta_{1,2}^{(2)}B^H\vert^{pq}\big]}{h_n^q C_p^q \varepsilon^q}.
\end{align*}
For any $q\ge 2$, this yields that
\begin{align}\label{rosenthal}
\mathbb{P}\left(\Big\vert h_n^{-1}\sum_{k=2}^{h_n}C_p^{-1}\vert \Delta_{1,k}^{(2)}B^H\vert^p-C_H^{\frac{p}{2}}     \Big\vert\geq \varepsilon        \right)=\mathcal{O}\big(h_n^{-\frac{q}{2}}\big).
\end{align}
We hence obtain for any $\varepsilon>0$ that
\begin{align*}
  &\mathbb{P}\Big(K^p\max_{k}\Big\vert   h_n^{-1}C_p^{-1}\sum_{j=(k-1)h_n+2}^{kh_n} \Big\vert \Delta_{1,j}^{(2)}B^H \Big\vert^p-C_H^{\frac{p}{2}}\Big\vert>\varepsilon)\\
	&\le \frac{n}{h_n}\cdot \mathbb{P}\Big(K^p C_p^{-1} h_n^{-1}\Big\vert \sum_{j=2}^{h_n} \Big\vert \Delta_{1,j}^{(2)}B^H \Big\vert^p-C_H^{\frac{p}{2}}\Big\vert>\varepsilon\Big)\\
	&=\mathcal{O}\big(n^{1-\beta} n^{-q\beta/2}\big),
\end{align*}
using standard estimates and \eqref{rosenthal}. Setting $q\in\N$ sufficiently large, we obtain for any $\varepsilon>0$ that 
\begin{align*}
\mathbb{P}\Big(n^{\beta_1}K^p\max_{k}\Big\vert   h_n^{-1}C_p^{-1}\sum_{j=(k-1)h_n+2}^{kh_n} \Big\vert \Delta_{1,j}^{(2)}B^H \Big\vert^p-C_H^{\frac{p}{2}}\Big\vert>\varepsilon)=\mathcal{O}\big(n^{1-\beta} n^{q(\beta_1-\beta/2)}\big)=\KLEINO(1),
\end{align*} 
since $\beta_1<\beta/2$. 
\end{proof}
\subsection{Asymptotic behaviour of the test statistics}
The next lemma is crucial. It establishes that our maxima statistics on which the tests are based on have the same limit distribution as the maximum of the normalised (absolute) second-order increments of fractional Brownian motion.  
\begin{lem}\label{Approximation}
Assume that $(\sigma_t)_{t\in[0,1]}$ satisfies Assumption  \ref{Annahmen}.

\begin{enumerate}
    \item[(i)]   Let $p\in(0,1]$ and set $h_n\propto n^{\beta}$, for $\beta\in (pH,1)$. If $J_t=0$, for all $t\in[0,1]$, it holds for $0<\beta_1<\beta/2$ and $0<\beta_2<p\alpha$, that
    \begin{eqnarray*}
    &&\max_{k,j}\Bigg\vert \left(\widehat{\sigma_{(k h_n)/n}^p}\right)^{-\frac{1}{p}} \Delta_{n,j}^{(2)}X -C_H^{-\frac{1}{2}}\Delta_{n,j}^{(2)}B^H \Bigg\vert
= \KLEINO_{\mathbb{P}}\left(n^{-(\beta_1\wedge \beta_2)-H}\sqrt{2\log(2n)}\right).
    \end{eqnarray*}
    \item[(ii)] Suppose that  Assumption \ref{AnnahmeSpruenge} is satisfied for some $p \in (0,1]$, and select $h_n\propto n^{\beta}$, with $\beta\in (pH,1)$. For $0<\beta_1<\beta/2$, $0<\beta_2<p\alpha$, and $0<\beta_3<\beta-pH$, it holds that
\begin{eqnarray*}
&&\max_{k,j}\Bigg\vert \left(\widehat{\sigma_{(k h_n)/n}^p}\right)^{-\frac{1}{p}} \Delta_{n,j}^{(2)}Y -C_H^{-\frac{1}{2}}\Delta_{n,j}^{(2)}B^H \Bigg\vert
= \KLEINO_{\mathbb{P}}\left(n^{-(\beta_1\wedge \beta_2\wedge\beta_3)-H}\sqrt{2\log(2n)}\right).
\end{eqnarray*}
\end{enumerate}
\end{lem}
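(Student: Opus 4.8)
The plan is to reduce both claims to the uniform consistency of the normalising factor from Lemma \ref{GK}, combined with the extreme-value scaling of the second-order increments of $B^H$ from Lemma \ref{Berman}. I would prove part (i) in detail; part (ii) is then completely analogous, the only change being that Lemma \ref{GK}(ii) is invoked in place of Lemma \ref{GK}(i) — this produces the extra exponent $\beta_3$, which stems from the jump-robustness of the normalisation (Lemma \ref{Robustheit}) — whereas the numerator $\Delta_{n,j}^{(2)}Y$ is handled identically because it contains no jumps. Abbreviating $s_k=(\widehat{\sigma_{(kh_n)/n}^p})^{1/p}$ and $t_k=C_H^{1/2}\sigma_{(kh_n)/n}$, so that $t_k^p=C_H^{p/2}\sigma_{(kh_n)/n}^p$, I would use the decomposition
\[s_k^{-1}\Delta_{n,j}^{(2)}Y-C_H^{-1/2}\Delta_{n,j}^{(2)}B^H=(s_k^{-1}-t_k^{-1})\Delta_{n,j}^{(2)}Y+C_H^{-1/2}\sigma_{(kh_n)/n}^{-1}\big(\Delta_{n,j}^{(2)}Y-\sigma_{(kh_n)/n}\Delta_{n,j}^{(2)}B^H\big)\,,\]
and bound the maximum over $(k,j)$ of each summand separately.

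For the first summand, which will turn out to be the dominant one, I would convert the control of $\widehat{\sigma_{(kh_n)/n}^p}$ into control of its reciprocal $p$-th root. Since $t_k^p$ lies in the fixed compact interval $[C_H^{p/2}V^p,C_H^{p/2}K^p]$, bounded away from $0$ by Assumption \ref{Annahmen}, Lemma \ref{GK}(i) forces $s_k^p$ into a neighbourhood of that interval with probability tending to one, where $x\mapsto x^{-1/p}$ is Lipschitz; this gives $\max_k|s_k^{-1}-t_k^{-1}|=\KLEINO_{\P}(n^{-(\beta_1\wedge\beta_2)})$. Next, by the self-similarity of $B^H$ and the second part of Lemma \ref{Berman} applied to the standardised stationary sequence $\Delta_{1,j}^{(2)}B^H/\sqrt{C_H}$, whose covariances decay fast enough (recall $(\rho(n))^2\propto n^{4H-8}$), one has $\max_j|\Delta_{n,j}^{(2)}B^H|=\mathcal{O}_{\P}(n^{-H}\sqrt{2\log(2n)})$; together with the frozen-coefficient Love-Young bound $|\Delta_{n,j}^{(2)}Y-\sigma_{(j-1)/n}\Delta_{n,j}^{(2)}B^H|=\mathcal{O}_{\P}(n^{-(\alpha+H)})$ from the proof of Lemma \ref{Bias} and boundedness of $\sigma$, this yields $\max_j|\Delta_{n,j}^{(2)}Y|=\mathcal{O}_{\P}(n^{-H}\sqrt{2\log(2n)})$. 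Bounding the maximum of the product by the product of the maxima then shows the first summand is $\KLEINO_{\P}(n^{-(\beta_1\wedge\beta_2)-H}\sqrt{2\log(2n)})$, which is exactly the asserted rate, and which is also where the factor $\sqrt{2\log(2n)}$ enters.

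For the second summand I would split $\Delta_{n,j}^{(2)}Y-\sigma_{(kh_n)/n}\Delta_{n,j}^{(2)}B^H$ into the frozen-coefficient error $\Delta_{n,j}^{(2)}Y-\sigma_{(j-1)/n}\Delta_{n,j}^{(2)}B^H=\mathcal{O}_{\P}(n^{-(\alpha+H)})$ and the spatial-bias term $(\sigma_{(j-1)/n}-\sigma_{(kh_n)/n})\Delta_{n,j}^{(2)}B^H$. The first is of lower order because $\beta_1\wedge\beta_2\le\beta_2<p\alpha\le\alpha$, so that $n^{-(\alpha+H)}=\KLEINO(n^{-(\beta_1\wedge\beta_2)-H})$. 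For the second, Hölder continuity of $\sigma$ across a block of length $h_n/n$ gives $|\sigma_{(j-1)/n}-\sigma_{(kh_n)/n}|\le K(h_n/n)^\alpha=\mathcal{O}(n^{-\alpha(1-\beta)})$, and multiplying by $\max_j|\Delta_{n,j}^{(2)}B^H|$ bounds this contribution by $\mathcal{O}_{\P}(n^{-\alpha(1-\beta)-H}\sqrt{2\log(2n)})$.

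I expect the delicate point to be precisely this spatial-bias term, since its control requires $\alpha(1-\beta)>\beta_1\wedge\beta_2$. This is not automatic from $0<\beta<1$ alone, but it holds for the admissible (at most rate-optimal) bandwidths with $\beta\le 2\alpha/(2\alpha+1)$, for which $\beta_1<\beta/2\le\alpha(1-\beta)$ and hence $\beta_1\wedge\beta_2\le\beta_1<\alpha(1-\beta)$; this is the exact analogue of how the $(h_n/n)^\alpha$ bias is absorbed in the proof of Lemma \ref{GK}. The remaining routine care lies in making the informal "product of two stochastic orders" rigorous — justifying that $\max_{k,j}|(s_k^{-1}-t_k^{-1})\Delta_{n,j}^{(2)}Y|$ is dominated by $(\max_k|s_k^{-1}-t_k^{-1}|)(\max_j|\Delta_{n,j}^{(2)}Y|)$ despite the block index $k$ being tied to $j$ — and in confirming that the Love-Young constant, being a single random variable with moments of all orders, acts uniformly over all $n$ increments without incurring an additional logarithmic loss.
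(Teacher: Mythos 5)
Your proposal follows essentially the same route as the paper's proof: the same splitting into the normalisation error $(s_k^{-1}-t_k^{-1})\Delta_{n,j}^{(2)}X$ and the coefficient-freezing error, controlled respectively via Lemma \ref{GK} combined with $\max_j|\Delta_{n,j}^{(2)}B^H|=\mathcal{O}_{\P}\big(n^{-H}\sqrt{2\log(2n)}\big)$, and via the Love-Young bound plus the H\"older continuity of $(\sigma_t)$ across a block. Your remark on the spatial-bias term is well taken: the paper's proof absorbs the $\mathcal{O}_{\P}\big((h_n/n)^{\alpha}n^{-H}c_n\big)$ contribution into $\KLEINO_{\P}\big(n^{-(\beta_1\wedge\beta_2)-H}c_n\big)$ without comment, which indeed requires $\alpha(1-\beta)>\beta_1\wedge\beta_2$ (automatic for $\beta\le 2\alpha/(2\alpha+1)$ as you note, but not for all $0<\beta<1$), precisely the point you flag.
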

\begin{proof}
We will prove the first part of the lemma. The second part can be shown analogously. Recall that $c_n=\sqrt{2\log(2n)}$. Standard estimates for maxima yield that
\begin{eqnarray*}
\max_{k,j}\Bigg\vert \left(\widehat{\sigma_{(k h_n)/n}^p}\right)^{-\frac{1}{p}} \Delta_{n,j}^{(2)}X
-C_H^{-\frac{1}{2}}\frac{\sigma_{(k h_n)/n} \Delta_{n,j}^{(2)}B^H}{ \sigma_{(k h_n)/n}} \Bigg\vert
&\leq&  \max_{k,j}\Bigg\vert \left(\widehat{\sigma_{(k h_n)/n}^p}\right)^{-\frac{1}{p}} \Delta_{n,j}^{(2)}X- C_H^{-\frac{1}{2}}\frac{\Delta_{n,j}^{(2)}X}{\sigma_{(k h_n)/n}}  \Bigg\vert\\
&+& V^{-1}C_H^{-\frac{1}{2}} \max_{k,j}\Big\vert \Delta_{n,j}^{(2)}X-\sigma_{(kh_n)/n}\Delta_{n,j}^{(2)}B^H \Big\vert,
\end{eqnarray*}
with $V$ from \eqref{bounded}. We first deal with the second addend. Note that we have the following inequality: 
\begin{eqnarray*}
\Big\vert \left(\Delta_{n,j}^{(2)}X\right)-\sigma_{(kh_n)/n}\Delta_{n,j}^{(2)}B^H \Big\vert
&\leq& \Big\vert \Delta_{n,j}^{(1)}X-\sigma_{(kh_n)/n}\Delta_{n,j}^{(1)}B^H \Big\vert
+\Big\vert \Delta_{n,j-1}^{(1)}X-\sigma_{(kh_n)/n}\Delta_{n,j-1}^{(1)}B^H \Big\vert\,,
\end{eqnarray*}
as well as the identity
\begin{eqnarray*}
\Delta_{n,j}^{(1)}X-\sigma_{(kh_n)/n}\Delta_{n,j}^{(1)}B^H
= \Delta_{n,j}^{(1)}X-\sigma_{(j-1)/n}\Delta_{n,j}^{(1)}B^H+(\sigma_{(j-1)/n}-\sigma_{(kh_n)/n})\Delta_{n,j}^{(1)}B^H.
\end{eqnarray*}
A direct application of the Love-Young inequality yields that
\begin{eqnarray*}
\max_{k,j}\Big\vert \Delta_{n,j}^{(1)}X-\sigma_{(j-1)/n}\Delta_{n,j}^{(2)}B^H\Big\vert=\mathcal{O}_{\mathbb{P}}(n^{-\alpha-H}). 
\end{eqnarray*}
Since $(\sigma_s)_{s\in[0,1]}$ is H\"older continuous, we obtain that
\begin{eqnarray*}
\max_{k,j}\Big\vert (\sigma_{(j-1)/n}-\sigma_{(kh_n)/n})\Delta_{n,j}^{(1)}B^H \Big\vert\leq K_1\left(\frac{h_n}{n} \right)^{\alpha}\max_{1\leq j\leq n}\Big\vert \Delta_{n,j}^{(1)}B^H \Big\vert
=\mathcal{O}_{\mathbb{P}}\left( \left(\frac{h_n}{n} \right)^{\alpha}n^{-H}c_n \right).
\end{eqnarray*}
As a result, we conclude that 
\begin{eqnarray*}
  V^{-1}C_H^{-\frac{1}{2}} \max_{k,j}\Bigg\vert \Delta_{n,j}^{(2)}X-\sigma_{(kh_n)/n}\Delta_{n,j}^{(2)}B^H \Bigg\vert= \mathcal{O}_{\mathbb{P}}(n^{-\alpha-H})+\mathcal{O}_{\mathbb{P}}\left( \left(\frac{h_n}{n} \right)^{\alpha}n^{-H}c_n \right). 
\end{eqnarray*}
We now deal with the first addend above. We obtain the following estimate:
\begin{eqnarray*}
 \max_{k,j}\Bigg\vert \left(\widehat{\sigma_{(k h_n)/n}^p}\right)^{-\frac{1}{p}} \Delta_{n,j}^{(2)}X-C_H^{-\frac{1}{2}} \frac{\Delta_{n,j}^{(2)}X}{\sigma_{(k h_n)/n}}  \Bigg\vert
 \leq \frac{\max_{k}\Big\vert C_H^{\frac{1}{2}}\sigma_{(k h_n)/n}-\left(\widehat{\sigma_{(k h_n)/n}^p}\right)^{\frac{1}{p}} \Big\vert}{\min_{k}\Big\vert \sigma_{(k h_n)/n}\cdot  \left(\widehat{\sigma_{(k h_n)/n}^p}\right)^{\frac{1}{p}}C_H^{\frac{1}{2}}\Big\vert}\cdot \max_{k,j}\left(  \Big\vert  \Delta_{n,j}^{(2)}X\Big\vert \right)\,.
\end{eqnarray*}
Since $f(x)=x^{1/p}$ is  Lipschitz continuous on a compact interval and $(\sigma_s)_{s\in[0,1]}$ is bounded, we obtain the following upper bound: 
\begin{eqnarray*}
\max_{k}\Big\vert C_H^{\frac{1}{2}} \sigma_{(k h_n)/n}\cdot\left(\widehat{\sigma_{(k h_n)/n}^p}\right)^{\frac{1}{p}}- C_H\sigma_{(k h_n)/n}^2\Big\vert&\leq& KC_H^{\frac{1}{2}} \max_{1\leq k\leq m}\Big\vert \left(\widehat{\sigma_{(k h_n)/n}^p}\right)^{\frac{1}{p}}- C_H^{\frac{1}{2}}\sigma_{(k h_n)/n}\Big\vert\\
&\leq& K_1 C_H^{\frac{1}{2}} \max_{k}\Big\vert \widehat{\sigma_{(k h_n)/n}^p}- C_H^{\frac{p}{2}}\sigma_{(k h_n)/n}^p\Big\vert\\
&=&\KLEINO_{\mathbb{P}}\left(n^{-(\beta_1\wedge \beta_2)}\right),
\end{eqnarray*}
where we applied the first part of Lemma \ref{GK} in the final step. This bound together with
\begin{eqnarray*}
&&\min_{k}\Big\vert  \sigma_{(k h_n)/n}\cdot\left(\widehat{\sigma_{(k h_n)/n}^p}\right)^{\frac{1}{p}}C_H^{\frac{1}{2}} \Big\vert\\
&\geq& \min_{k}\left( C_H\sigma_{(k h_n)/n}^2-\Big\vert C_H^{\frac{1}{2}} \sigma_{(k h_n)/n}\cdot\left(\widehat{\sigma_{(k h_n)/n}^p}\right)^{\frac{1}{p}}- C_H\sigma_{(k h_n)/n}^2\Big\vert\right)\\
&\geq& C_H V^2-\max_{k} \Big\vert  C_H^{\frac{1}{2}}\sigma_{(k h_n)/n}\cdot\left(\widehat{\sigma_{(k h_n)/n}^p}\right)^{\frac{1}{p}}-C_H \sigma_{(k h_n)/n}^2\Big\vert,
\end{eqnarray*}
implies that 
\begin{eqnarray*}
\frac{\max_{k}\Big\vert C_H^{\frac{1}{2}}\sigma_{(k h_n)/n}-\left(\widehat{\sigma_{(k h_n)/n}^p}\right)^{\frac{1}{p}} \Big\vert}{\min_{k}\Big\vert \sigma_{(k h_n)/n}\cdot  \left(\widehat{\sigma_{(k h_n)/n}^p}\right)^{\frac{1}{p}}C_H^{\frac{1}{2}}\Big\vert}=\KLEINO_{\mathbb{P}}(n^{-(\beta_1\wedge \beta_2)}).
\end{eqnarray*}
Since
\begin{eqnarray*}
\max_{k,j}  \Big\vert  \Delta_{n,j}^{(2)}X\Big\vert \leq \mathcal{O}_{\mathbb{P}}(n^{-\alpha-H})+K\max_{1\leq j\leq n}\vert \Delta_{n,j}^{(2)}B^H \vert,
\end{eqnarray*}
we obtain that 
\begin{eqnarray*}
\max_{k,j}\Bigg\vert \left(\widehat{\sigma_{(k h_n)/n}^p}\right)^{-\frac{1}{p}} \Delta_{n,j}^{(2)}X-C_H^{-\frac{1}{2}} \frac{\Delta_{n,j}^{(2)}X}{\sigma_{(k h_n)/n}}  \Bigg\vert
&=& \KLEINO_{\mathbb{P}}(n^{-(\beta_1\wedge \beta_2)-\alpha-H})+\KLEINO_{\mathbb{P}}(n^{-(\beta_1\wedge \beta_2)-H}c_n)\\
&=&\KLEINO_{\mathbb{P}}(n^{-(\beta_1\wedge \beta_2)-H}c_n)\,,
\end{eqnarray*}
and conclude
\begin{eqnarray*}
  &&  \max_{k,j}\Bigg\vert \left(\widehat{\sigma_{(k h_n)/n}^p}\right)^{-\frac{1}{p}} \Delta_{n,j}^{(2)}X
-C_H^{-\frac{1}{2}}\frac{\sigma_{(k h_n)/n} \Delta_{n,j}^{(2)}B^H}{ \sigma_{(k h_n)/n}} \Bigg\vert\\
&=&\KLEINO_{\mathbb{P}}(n^{-(\beta_1\wedge \beta_2)-H}c_n)+\mathcal{O}_{\mathbb{P}}(n^{-\alpha-H})+\mathcal{O}_{\mathbb{P}}\left(\left(\frac{h_n}{n} \right)^{\alpha}n^{-H}c_n \right)\\
&=& \KLEINO_{\mathbb{P}}(n^{-(\beta_1\wedge \beta_2)-H}c_n).
\end{eqnarray*}
\end{proof}
\textit{Proof of Theorem \ref{Gumbelkonvergenz}:}
We will only prove the first part of the theorem. The proof of the second part follows analogously. Note that the claim follows from Lemma \ref{Berman}, if we can show that
\begin{eqnarray*}
    \Bigg\vert  a_n(T_n-b_n)-a_n\left(\max_{1\leq j\leq n}n^HC_H^{-\frac{1}{2}}\Delta_{n,j}^{(2)}B^H -b_n\right)   \Bigg\vert=\KLEINO_{\mathbb{P}}(1).
    \end{eqnarray*}
   Applying the inequality
   \begin{eqnarray*}
   \vert \max_{1\leq k\leq n}(x_k+y_k)-\max_{1\leq k\leq n}x_k  \vert\leq \max_{1\leq k\leq n}\vert y_k \vert, \quad x_1,\ldots,x_n,y_1,\ldots,y_n\in\mathbb{R},
   \end{eqnarray*}
   we obtain the estimate
   \begin{eqnarray*}
   \Big\vert  a_n(T_n-b_n)-a_n\left(\max_{1\leq j\leq n}n^HC_H^{-\frac{1}{2}}\Delta_{n,j}^{(2)}B^H -b_n\right)   \Big\vert
   \leq a_nn^H \max_{k,j}\Big\vert \left(\widehat{\sigma_{(k h_n)/n}^p}\right)^{-\frac{1}{p}} \Delta_{n,j}^{(2)}X -C_H^{-\frac{1}{2}}\Delta_{n,j}^{(2)}B^H \Big\vert.
   \end{eqnarray*}
   The upper bound is $\KLEINO_{\mathbb{P}}(1)$, due to the first part of Lemma  \ref{Approximation}, and the claim follows.\hfill\qed
	
\vspace*{.2cm}\noindent
\textit{Proof of Theorem \ref{Divergenz der Teststatistik}:} 
We only prove the first part $(i)$ of the theorem. The proof of the second part $(ii)$ is completely analogous.

First, recall that we discretely observe the mixture process $X_t=Y_t+J_t$, $t\in[0,1]$. Since for arbitrary real numbers $x_1,\ldots,x_n,y_1,\ldots,y_n\in\mathbb{R}$, it holds that  
    \begin{eqnarray*}
     \max_{1\leq k\leq n}(x_k+y_k)\geq \max_{1\leq k\leq n}x_k+\min_{1\leq k\leq n}y_k,
    \end{eqnarray*}
    we can use the following lower bound
\begin{eqnarray*}
T_n
\geq \max_{k,j} \left(\widehat{\sigma_{(k h_n)/n}^p}\right)^{-\frac{1}{p}}n^H\Delta_{n,j}^{(2)}J
+ \min_{k,j} \left(\widehat{\sigma_{(k h_n)/n}^p}\right)^{-\frac{1}{p}}n^H \Delta_{n,j}^{(2)}Y.
\end{eqnarray*}
Using the previous bound for $T_n$, we conclude that 
\begin{eqnarray*}
a_n(T_n-b_n)
\geq a_n n^H\max_{k,j} \left(\widehat{\sigma_{(k h_n)/n}^p}\right)^{-\frac{1}{p}}\Delta_{n,j}^{(2)}J
+ a_n\left(  \min_{k,j} \left(\widehat{\sigma_{(k h_n)/n}^p}\right)^{-\frac{1}{p}}n^H\Delta_{n,j}^{(2)}Y+b_n\right)-2a_nb_n.\\
\end{eqnarray*} 
Due to the second part of Lemma \ref{Approximation}, and Lemma \ref{Minimum}, the second addend has the same limit law as 
\begin{eqnarray*}
a_n\left( n^H C_H^{-\frac{1}{2}} \min_{2\leq j\leq n}\Delta_{n,j}^{(2)}B^H  +b_n\right)
\end{eqnarray*}
and is therefore bounded in probability. By our assumption, there exists $\omega\in\Omega_1$ and $t\in(0,1]$, with $\Delta J_t(\omega):=\lim_{s\nearrow t }(J_t(\omega)-J_s(\omega))>0$. Therefore, we conclude that 
\begin{eqnarray*}
\liminf_{n\to\infty}\max_{2\leq j\leq n}\Delta_{n,j}^{(2)}J(\omega)
\geq \liminf_{n\to\infty}\Delta_{n,k_n}^{(2)}J(\omega)=\Delta J_t(\omega)>0,
\end{eqnarray*}
where $k_n(t)=\lceil nt \rceil$  and the equality holds because $J$ has c\`adl\`ag paths. As a consequence, we obtain for $\omega\in \lbrace \omega\in\Omega \mid \sum_{k=1}^n\vert \Delta_{n,k}^{(1)}J(\omega) \vert^p<\infty \rbrace   \cap \Omega_1$, that
\begin{eqnarray*}
\lim_{n\to\infty}(n^{-\gamma}a_n(T_n(\omega)-b_n))\geq C a_n n^{H-\gamma}\liminf_{n\to\infty}\max_{2\leq j\leq n} \Delta_{n,j}^{(2)}J(\omega)+\mathcal{O}_{\mathbb{P}}(1).\hfill\qed
\end{eqnarray*}

\subsection{Proofs of consistent localization\label{localizationproofs}}
\noindent
\textit{Proof of Proposition \ref{proplocalization}:} 
Under the assumptions of Proposition \ref{proplocalization}, $\omega\in\Omega_1$, with one jump at time $\tau\in(0,1)$, and $\Delta J_t=0$, for all $t\in[0,1]\setminus\{\tau\}$. Set $\delta=|\Delta J_{\tau}|$, and $j^*=\lceil \tau n\rceil$, such that
\[|\Delta_{n,j^*}^{(2)} J|=\delta=|\Delta_{n,j^*+1}^{(2)} J|\,,\]
while \(|\Delta_{n,j}^{(2)} J|=0\), for all $j\in\{2,\ldots,n\}\setminus\{j^*,j^*+1\}$.
We use the notation from \eqref{sigmahat} to write
\[\hat\tau_n=\frac{1}{n}\operatorname{argmax}_{2\le j\le n }\frac{|\Delta_{n,j}^{(2)} X|}{\Big(h_n^{-1}C_p^{-1}\sum_{i=\lfloor jh_n^{-1}\rfloor}^{\lfloor jh_n^{-1}\rfloor+h_n}|\Delta_{n,j}^{(2)} X|^p\Big)^{1/p}}=\frac{1}{n}\operatorname{argmax}_{2\le j\le n }\frac{|\Delta_{n,j}^{(2)} X|n^H}{\Big(\widehat{\sigma^p_{\lfloor jh_n^{-1}\rfloor/n}}\Big)^{1/p}}.\]
If it holds true that
\begin{align}\label{condloc}2n^H\,\max_{2\le j\le n }|\Delta_{n,j}^{(2)} Y|\le C\delta n^H\,,\end{align}
with any constant $0<C<\infty$, we obtain for any $j<j^*$ that almost surely
\begin{align*}
\max_{2\le i\le j }\frac{|\Delta_{n,i}^{(2)} X|n^H}{\Big(\widehat{\sigma^p_{\lfloor ih_n^{-1}\rfloor/n}}\Big)^{1/p}}&=\max_{2\le i\le j }\frac{|\Delta_{n,i}^{(2)} Y|n^H}{\Big(\widehat{\sigma^p_{\lfloor ih_n^{-1}\rfloor/n}}\Big)^{1/p}}\\
&\le \max_{2\le i\le n }\frac{|\Delta_{n,i}^{(2)} Y|n^H}{\Big(\widehat{\sigma^p_{\lfloor ih_n^{-1}\rfloor/n}}\Big)^{1/p}}\\
&\le \frac{\delta n^H}{\Big(\widehat{\sigma^p_{\lfloor j^*h_n^{-1}\rfloor/n}}\Big)^{1/p}}-\max_{2\le i\le n }\frac{|\Delta_{n,i}^{(2)} Y|n^H}{\Big(\widehat{\sigma^p_{\lfloor ih_n^{-1}\rfloor/n}}\Big)^{1/p}}\\
&\le \frac{|\Delta_{n,j^*}^{(2)} J| n^H}{\Big(\widehat{\sigma^p_{\lfloor j^*h_n^{-1}\rfloor/n}}\Big)^{1/p}}-\frac{|\Delta_{n,j^*}^{(2)} Y|n^H}{\Big(\widehat{\sigma^p_{\lfloor j^*h_n^{-1}\rfloor/n}}\Big)^{1/p}}\\
&\le \frac{|\Delta_{n,j^*}^{(2)} X| n^H}{\Big(\widehat{\sigma^p_{\lfloor j^*h_n^{-1}\rfloor/n}}\Big)^{1/p}}\,.
\end{align*}
For the second inequality we apply \eqref{condloc} and in the last step the reverse triangle inequality. We use that since $(\sigma_t)$ is bounded from below and above, we have by Lemma \ref{GK} almost sure lower and upper bounds for the block-wise volatility estimates which allows to use \eqref{condloc}. Analogous inequalities show under the condition \eqref{condloc} for any $j>j^*+1$ that almost surely
\begin{align*}
\max_{j\le i\le n }\frac{|\Delta_{n,i}^{(2)} X|n^H}{\Big(\widehat{\sigma^p_{\lfloor ih_n^{-1}\rfloor/n}}\Big)^{1/p}}\le \frac{|\Delta_{n,j^*}^{(2)} X| n^H}{\Big(\widehat{\sigma^p_{\lfloor j^*h_n^{-1}\rfloor/n}}\Big)^{1/p}}\,.
\end{align*}
We can use these bounds under the condition \eqref{condloc} to bound the expected absolute estimation error:
\begin{align*}\E\big[|\hat\tau_n-\tau|\big]&=\E\big[|\hat\tau_n-\tau|\cdot\1(n\hat\tau_n\in\{j^*,j^*+1\})\big]+\E\big[|\hat\tau_n-\tau|\cdot \1(n\hat\tau_n\notin\{j^*,j^*+1\})\big]\\
&\le 2n^{-1}+\P\big(n\hat\tau_n\notin\{j^*,j^*+1\}\big)\\
&\le 2n^{-1}+\P\Big(2n^H\,\max_{2\le j\le n }|\Delta_{n,j}^{(2)} Y|> C\delta n^H\Big)\\
&\le 2n^{-1}+\sum_{j=2}^n\P\Big(2n^H\,|\Delta_{n,j}^{(2)} Y|> C\delta n^H\Big)\\
&=\mathcal{O}\Big(2n^{-1}+\sum_{j=2}^n\exp\Big(-\frac{\delta^2n^{2H}C^2}{2C_H^2\sigma^2_{j/n}}\Big)\Big)\\
&=\mathcal{O}\Big( 2n^{-1}+n\cdot\exp\Big(-\frac{\delta^2n^{2H}C^2}{2C_H^2V^2}\Big)\Big)=\mathcal{O}(n^{-1})\,.
\end{align*}
We use a Gaussian tail bound, that is, for Gaussian random variables exponential bounds of tail probabilities are well-known. The constant $C_H$ is from the variance of second-order increments and $V$ is the lower bound for $(\sigma_t)$ from \eqref{bounded}. The last step holds true, since $\lim_{n\to\infty}n^2\cdot \exp\big(-c \,n^{2H}\big)=0$, with any positive constant $c$. As long as $\delta n^{H}$ increases at some polynomial speed, the resulting order remains valid. We conclude with a standard estimate based on Markov's inequality.

\subsection{Proofs of jump-robust inference for rough processes\label{roughproofs}}
\noindent
\textit{Proof of Proposition \ref{proprobust1}:} We decompose
\begin{align*}n^{2H-1}\sum_{j=1}^n\big(\Delta_{n,j}^{(1)}X\big)^2=n^{2H-1}\sum_{j=1}^n\big(\Delta_{n,j}^{(1)}Y\big)^2+n^{2H-1}\sum_{j=1}^n\big(\Delta_{n,j}^{(1)}J\big)^2+2n^{2H-1}\sum_{j=1}^n\Delta_{n,j}^{(1)}Y\,\Delta_{n,j}^{(1)}J\,.\end{align*}
The central limit theorem given in Theorem 4 from \cite{corcuera2006power} yields that
\begin{align}\label{rob1}n^{2H-1}\sum_{j=1}^n\big(\Delta_{n,j}^{(1)}Y\big)^2=\int_0^1\sigma_s^2\,\intdiff s+\mathcal{O}_{\P} \big(n^{-1/2}\big)\,.\end{align}
Since the jumps have finite quadratic variation, the sum of the squared jumps over any bounded interval is finite, such that
\[n^{2H-1}\sum_{j=1}^n\big(\Delta_{n,j}^{(1)}J\big)^2=\mathcal{O}_{\P}(n^{2H-1})\,.\]
Since the cross term has expectation zero, it suffices to bound its variance with the Cauchy-Schwarz inequality:
\begin{align*}\E\Big[\Big(2n^{2H-1}\sum_{j=1}^n\Delta_{n,j}^{(1)}Y\,\Delta_{n,j}^{(1)}J\Big)^2\Big]& \le 4 n^{4H-2}\sum_{j=1}^n\E\big[\big(\Delta_{n,j}^{(1)}Y\big)^2\big]\,\sum_{j=1}^n\E\big[\big(\Delta_{n,j}^{(1)}J\big)^2\big]\\
&\le 4 \,n^{2H-1}\sum_{j=1}^n\E\big[\big(\Delta_{n,j}^{(1)}Y\big)^2\big]\,n^{2H-1}\sum_{j=1}^n\E\big[\big(\Delta_{n,j}^{(1)}J\big)^2\big]\\
&=\mathcal{O}\big(n^{2H-1} \big)\,,\end{align*}
what readily yields that
\[2n^{2H-1}\sum_{j=1}^n\Delta_{n,j}^{(1)}Y\,\Delta_{n,j}^{(1)}J=\mathcal{O}_{\P}\big(n^{H-1/2} \big)\,.\]
We exploited the assumed independence of $(Y_t)$ and $(J_t)$. If $H\in(0,1/2)$, we conclude \eqref{intvola}.\hfill\qed

\noindent
\textit{Proof of Proposition \ref{proprobust2}:} Based on Proposition \ref{proprobust1} and an analogous estimate for the discrete quadratic variation with distances $2/n$, we obtain for $H\in(0,1/2)$ that
\begin{align*}
\hat H_n&=\frac{1}{2\log(2)}\log\bigg(\frac{2^{2H}\sum_{j=0}^{n-2}\big(X_{(j+2)/n}-X_{j/n}\big)^2(2/n)^{-2H}n^{-1}}{n^{2H-1}\sum_{j=0}^{n-1 }\big(X_{(j+1)/n}-X_{j/n}\big)^2}\bigg)\\
&=\frac{1}{2\log(2)}\log\bigg(2^{2H}\frac{\int_0^1\sigma_s^2\,\intdiff s+\mathcal{O}_{\P}\big(n^{\max((2H-1), -1/2)}\big)}{\int_0^1\sigma_s^2\,\intdiff s+\mathcal{O}_{\P}\big(n^{\max((2H-1), -1/2)}\big)}\bigg)\\
&=\frac{1}{2\log(2)}\log\big(2^{2H}\big)+\mathcal{O}_{\P}\big(n^{\max((2H-1), -1/2)}\big)\\
&=H+\mathcal{O}_{\P}\big(n^{\max((2H-1), -1/2)}\big)\,.
\end{align*}
For the last step, we exploit a bivariate Taylor expansion 
\begin{align*}
\log\Big(\frac{x+\delta_x}{z+\delta_z}\Big)=\log\Big(\frac{x}{z}\Big)+\frac{1}{x}\delta_x-\frac{1}{z}\delta_z+\mathcal{O}\big(\delta_x^2\vee \delta_z^2)\,,
\end{align*}
as $\delta_x,\delta_z\to 0$. Since $\int_0^1\sigma_s^2\,\intdiff s$ is almost surely lower bounded, by Proposition \ref{proprobust1} both power variation statistics which converge to the integrated squared volatility in probability have as well almost surely positive lower bounds, such that we conclude the stochastic order of the remainder.

If  $H\in(1/2,1)$, we obtain that
\begin{align*}
\hat H_n&=\frac{1}{2\log(2)}\log\bigg(\frac{\sum_{j=0}^{n-2}\big(J_{(j+2)/n}-J_{j/n}\big)^2+\mathcal{O}_{\P}\big(n(2/n)^{2H}+n^{1/2}(2/n)^{H}\big)}{\sum_{j=0}^{n-1 }\big(J_{(j+1)/n}-J_{j/n}\big)^2+\mathcal{O}_{\P}\big(n\,n^{-2H}+n^{1/2}\,n^{-H}\big)}\bigg)\\
&=\frac{1}{2\log(2)}\log\bigg(\frac{\sum_{j=0}^{n-2}\big(J_{(j+2)/n}-J_{j/n}\big)^2}{\sum_{j=0}^{n-1 }\big(J_{(j+1)/n}-J_{j/n}\big)^2}\bigg)+\KLEINO_{\P}(1)\,.
\end{align*}
The numerator in the logarithm satisfies
\begin{align*}
\sum_{j=0}^{n-2}\big(J_{(j+2)/n}-J_{j/n}\big)^2&=\sum_{j=0}^{n-2}\big(\big(J_{(j+2)/n}-J_{(j+1)/n}\big)^2+\big(J_{(j+1)/n}-J_{j/n}\big)^2\big)\\
&\hspace*{1cm}+2\sum_{j=0}^{n-2}\big(J_{(j+2)/n}-J_{(j+1)/n}\big)\big(J_{(j+1)/n}-J_{j/n}\big)\\
&=2\sum_{j=0}^{n-1}\big(J_{(j+1)/n}-J_{j/n}\big)^2+\KLEINO_{\P}(1)\,.
\end{align*}
To show that the cross term is asymptotically negligible, we work under the assumption that $(J_t)$ is an It\^{o} semimartingale with bounded jump sizes and of finite quadratic variation. A standard approach in the literature on statistics for semimartingales is to decompose the jumps
\begin{align*}J_t=J_t^{M,q}+(J_t-J_t^{M,q})\,,\end{align*}
into large jumps, $(J_t-J_t^{M,q})$, say of size larger than $1/q$, and compensated small jumps $J_t^{M,q}$, see for instance Remark 3.2 of \cite{vetter} who uses this for a related problem in the analysis of bipower variation statistics in presence of jumps. For any $q<\infty$, $(J_t-J_t^{M,q})$ is a finite-activity jump process which exhibits only finitely many jumps on any fix time interval. Since the probability of the set on that this process exhibits more than one jump within some time interval of length $2/n$ tends to 0 as $n\to\infty$, the cross term due to this component has to be zero. The same argument is detailed in the book by  \cite{jacodprotter}, for instance in Section 13.4.2.
Focusing hence on the small jumps, we can use their martingale structure to conclude. A standard estimate for the jump martingale of an It\^{o} semimartingale with bounded jump sizes yields for any $p\ge 2$ that
\begin{align}\label{mombound}\E\big[\big|J_{(j+1)/n}^{M,q}-J_{j/n}^{M,q}\big|^p\big]=\mathcal{O}\big(n^{-1}e_q\big)\,,\end{align}
uniformly for all $j$, with some constants $e_q$, which satisfy $e_q\to 0$, as $q\to\infty$. This is implied by Lemma 2.1.5 of \cite{jacodprotter}, similar as Eq.\ (9.5.7) in \cite{jacodprotter}. Since martingale increments have expectation zero and are uncorrelated, we obtain that
\[2\sum_{j=0}^{n-2}\big(J_{(j+2)/n}-J_{(j+1)/n}\big)\big(J_{(j+1)/n}-J_{j/n}\big)=\KLEINO_{\P}(1)\,,\]
as the expectation of the term is zero and the variance has an upper bound which tends to zero by an application of Hölder's inequality using the moment bounds \eqref{mombound}.

Finally, consider the case $H=1/2$, when squared jumps and the quadratic variation of the continuous part are balanced and of the same magnitude. In this case, the realized volatility converges to the quadratic variation, including the integrated squared volatility and the sum of the squared jumps. Since the expectation, variance and covariances of increments of the continuous, fractional process imply for $H=1/2$ that
\begin{align*}\sum_{j=0}^{n-2}\big(Y_{(j+2)/n}-Y_{j/n}\big)^2&=\sum_{j=0}^{n-2}\sigma^2_{\frac{j+1}{n}}\,2n^{-1}+\mathcal{O}_{\P}\big(n^{-1/2}\big)\\
&=2\int_0^1\sigma_s^2\intdiff s +\mathcal{O}_{\P}\big(n^{-1/2}\big)\,,
\end{align*}
under Assumption \ref{Annahmen}, while \eqref{rob1} applies to the terms in the denominator, and since we can re-use the above decomposition for the sum of squared jumps and estimates of mixed terms, the ratio in the logarithm converges to 2 again.\hfill\qed 
%
%
\addcontentsline{toc}{section}{References}
\bibliographystyle{apalike}
\bibliography{literature}

\end{document}